\g@addto@macro{\endabstract}{\@setabstract}
\newcommand{\authorfootnotes}{\renewcommand\thefootnote{\@fnsymbol\c@footnote}}%
\renewcommand{\thefootnote}{\Roman{footnote}}
\definecolor{cblue}{rgb}{0.16, 0.32, 0.75} 
\definecolor{cred}{rgb}{0.7, 0.11, 0.11}
 \newtheorem{Theorem}{Theorem}[section]
 \newtheorem{Corollary}[Theorem]{Corollary}
 \newtheorem{Lemma}[Theorem]{Lemma}
 \newtheorem{Proposition}[Theorem]{Proposition}
 \theoremstyle{definition}
 \newtheorem{Definition}[Theorem]{Definition}
 \newtheorem{Assumption}[Theorem]{Assumption}
 \theoremstyle{remark}
 \newtheorem{Remark}[Theorem]{Remark}
\renewcommand*{\mid}{:\,}
\renewcommand{\d}{\mathrm{d}}
\renewcommand{\i}{\mathrm{i}}
\DeclareMathOperator{\dom}{dom}
\renewcommand{\i}{\mathrm{i}}
\renewcommand*{\mid}{:\,}
\renewcommand{\d}{\mathrm{d}}
\newcommand{\norm}[1]{\| #1\|}
\newsavebox{\wonderh@t}
\newlength{\wi@th}
\newlength{\oldwi@th}
\newlength{\hei@th}
\def\hatjay{%
  \sbox{\wonderh@t}{\textit{J}}
  \setlength{\wi@th}{\wd\wonderh@t}
  \setlength{\oldwi@th}{\wd\wonderh@t}
  \setlength{\hei@th}{\ht\wonderh@t}
  \addtolength{\hei@th}{1pt}
  \addtolength{\wi@th}{1pt}
  \setlength{\wd\wonderh@t}{\wi@th}
  \setlength{\ht\wonderh@t}{\hei@th}
  \makebox[\oldwi@th][l]{\hbox{$\hat{\usebox{\wonderh@t}}$}}
}
\begin{document}
%%%%%%%%%%%%%%%%%%%%%%%%%%%%%%%%%%%%%%%%%%

\title[Time-Dependent Hamiltonians with a Constant Form Domain]{}

	\begin{center}
	\LARGE\textsc{
	On the Schrödinger Equation for Time-Dependent Hamiltonians with a Constant Form Domain }\par \bigskip
	
	\normalsize
	\authorfootnotes
	Aitor Balmaseda\footnote{abalmase@math.uc3m.es}\textsuperscript{,1},  
	Davide Lonigro\footnote{davide.lonigro@ba.infn.it}\textsuperscript{,2,3}, and
	Juan Manuel Pérez-Pardo\footnote{jmppardo@math.uc3m.es}\textsuperscript{,1,4} \par \bigskip
	
	\textsuperscript{1}\footnotesize Departamento de Matemáticas, Universidad Carlos III de Madrid, Avda. de la Universidad 30, 28911  Madrid, Spain \par
	\textsuperscript{2}\footnotesize Dipartimento di Fisica and MECENAS, Università di Bari, I-70126 Bari, Italy \par
	\textsuperscript{3}\footnotesize Istituto Nazionale di Fisica Nucleare, Sezione di Bari, I-70126 Bari, Italy \par
	\textsuperscript{4}\footnotesize Instituto de Ciencias Matemáticas (CSIC - UAM - UC3M - UCM) ICMAT, C/ Nicolás Cabrera 13--15, 28049 Madrid, Spain.
	\par \bigskip
	
\end{center}

\date{}

\begin{abstract}
We study two seminal approaches, developed by B.~Simon and J.~Kisyński, to the well-posedness of the Schrödinger equation with a time-dependent Hamiltonian. In both cases the Hamiltonian is assumed to be semibounded from below and to have constant form domain but a possibly non-constant operator domain. The problem is addressed in the abstract setting, without assuming any specific functional expression for the Hamiltonian. The connection between the two approaches is the relation between sesquilinear forms and the bounded linear operators representing them. We provide a characterization of continuity and differentiability properties of form-valued and operator-valued functions which enables an extensive comparison between the two approaches and their technical assumptions.
\end{abstract}

\maketitle
\vspace{-1cm}

\numberwithin{equation}{section}

%%%%%%%%%%%%%%%%%%%%%%%%%%%%%%%%%%%%%%%%%%%%%%%%%%%%%%%%%%%%%%%
%%%%%%%%%%%%%%%%%%%%%%%%%%%%%%%%%%%%%%%%%%%%%%%%%%%%%%%%%%%%%%%

\section{Introduction}
	A quantum dynamical system is a first-order linear evolution equation on a separable, complex Hilbert space $\mathcal{H}$, where the evolution is determined by a family of densely defined, self-adjoint operators $\{H(t)\}_{t\in I}$, where $I\subset\mathbb{R}$ is an interval. This family of operators is called the Hamiltonian of the system. To~simplify the notation, it is denoted by $H(t)$. In~the most general case, this represents the dynamics of a quantum system subjected to external time-varying forces. The~dynamics is given by the time-dependent (or non-autonomous) Schrödinger equation:

	\begin{equation*}
		\frac{\mathrm{d}}{\mathrm{d}t}\Psi(t)=-\i H(t)\Psi(t),
	\end{equation*}
	with some initial datum $\Psi(s)=\Psi_0$.

	Different from what happens in the autonomous case (i.e., $H(t)=H$), where the existence and uniqueness of the solution are guaranteed by the celebrated Stone theorem on one-parameter unitary groups~\cite{Stone1930,Stone1932,vonNeumann1932}, additional regularity conditions are required when $H(t)$ depends nontrivially on time. In~particular, when dealing with unbounded Hamiltonians, the~domain $\dom H(t)$ can possibly depend nontrivially on $t$. In~such cases, even by requiring the expression of the Hamiltonian to be reasonably ``well-behaved'' (e.g., continuously differentiable) as a function of time, the~well-posedness of the problem above is not ensured; standard methods such as product integrals or Dyson expansions cannot be applied~directly.

Singular potentials in the Schrödinger equation are a convenient way of analysing different types of nontrivial boundary conditions. One example is point-like interactions, which model impurities. This has applications in physically relevant systems such as propagation in quantum waveguides~\cite{facchi2021spectral, leonforte2021dressed,Lonigro_2021} or quantum dots~\cite{dell2005singular}. Furthermore, recent results allowing for the numerical computation of the spectrum for nontrivial boundary conditions~\cite{ibort2013numerical, lopez2017finite} opened the possibility to control the state of a quantum system by modifying the boundary conditions. The~latter has implications in the development of new quantum~technologies.

 The problem of time-dependent Hamiltonians for the class of models $H(t)=-\Delta+V(t)$, with~$\Delta$ being the Laplace operator on $L^2(\Omega)$, $\Omega\subset\mathbb{R}^n$, and~$V(t)$ being a time-dependent potential, has been studied extensively in the literature. However, these results cannot be applied directly to treat the cases with non-constant boundary conditions, e.g.,~singular potentials, and~one needs to rely on more abstract settings or take a case-by-case approach. The~situation where $V(t)$ is a time-dependent singular potential has also been considered~\cite{Wuller1986,Yajima1987,Yajima2011,Hughes1995,Carles2011}, for~example point (also called Dirac) interactions with a time-dependent position or strength~\cite{Mantile2011,Posilicano2007,DellAntonioFigariTeta2000}, possibly satisfying specific scaling properties~\cite{Samaj2002,Combescure1988}. In~these latter situations, the domains of the unbounded Hamiltonians $H(t)$ depend explicitly on the parameter $t$. Alternatively, one may consider the Laplace operator on a bounded region with time-dependent boundary conditions. In~such a case, the~time dependence of the operator is entirely encoded in its domain. Results in this direction have been found for a quantum particle in a one-dimensional cavity with moving walls, which can be transformed into a fixed-boundary problem~\cite{MunierBurganFeixEtAl1981,MakowskiDembinski1991,DiMartinoAnzaFacchiEtAl2013,FacchiGarneroMarmoEtAl2016}. More recently, time-dependent boundary conditions in a graph-like manifold, potentially relevant in quantum control theory, have been investigated~\cite{BalmasedaPerezPardo2019,BalmasedaLonigroPerezPardo2021}.
 
 From a mathematical point of view, the non-autonomous Schrödinger equation is a first-order linear evolution problem in a Hilbert space with an unbounded generator that depends on time, including the case in which its domain depends on time. The~first one addressing the problem in this abstract setting was T.~Kato~\cite{Kato1953,Kato1956}.
	In this first approach, he considered the operator domain $\dom H(t)$ to be constant and provided sufficient conditions for the existence of solutions.
	Furthermore, J.L.~Lions~\cite{Lions1961} addressed the existence of solutions for problems of this type for the case in which the operator defining the dynamics, i.e.,\ the Hamiltonian, is an elliptic differential operator with smooth coefficients.
	K.~Yosida~\cite{Yosida1963, Yosida1965} studied the case in which $H(t)$ is positive and~introduced a family of approximations $H_n(t) = H(t) (n^{-1} - H(t))^{-1}$, currently commonly known as Yosida's approximations. Even if his intention was to address the problem on which $\dom H(t)$ was not constant, T.~Kato noticed that Yosida's original assumptions implied that $\dom H(t)$ had to be constant; see the remark on page 429 in~\cite{Yosida1965}.

	It was J.~Kisyński~\cite{Kisynski1964} to be the first one to improve Yosida's techniques in a way allowing for time-varying domains.
	Independently, B.~Simon~\cite{Simon1971} took an approach very similar to that of J.~Kisyński, albeit only considering the case in which the form domain of the operator is constant; see Definition~\ref{def:hamiltonian_const_form}. In~the case of strictly positive operators, the form domain is the domain of the operator $H(t)^{1/2}$ obtained by the functional calculus of self-adjoint operators. 
	There is also another work by T.~Kato~\cite{Kato1973} addressing the problem for time-dependent domains, and~further refinements of this approach have been developed over the years.
	A more recent work on the abstract non-autonomous Schrödinger problem is~\cite{MasperoRobert2017}.
	The stability properties of time-dependent Hamiltonians have also been investigated via non-standard analysis techniques~\cite{Sloan1981} and, more recently, in~\cite{BalmasedaLonigroPerezPardo2021}, where sharper bounds to the norms of the solutions obtained by B.~Simon were found.
Finally, let us point out that many practical models use the nonlinear Schrödinger equation, i.e.,~they involve nonlinear terms, and a~certain energy law is preserved. For~instance, there are some existing works on the existence and uniqueness of solutions for certain nonlinear hyperbolic PDEs (with no dissipation) \cite{wang2010global}, as~well as the numerical approximation to these PDEs, such as~\cite{wang2011energy, baskaran2013energy, baskaran2013convergence, wang2015energy}.

The case of study of the research presented in this article is when $H(t)$ is a family of self-adjoint operators, uniformly bounded from below and with a constant form domain. No other assumptions on the particular form of the operator were made. The~problem of the existence and uniqueness of the solutions was studied by B.~Simon and J.~Kisyński with similar approaches. B.~Simon pointed out the relation between both approaches in his book (see footnote 21 in \cite{Simon1971}, p. 56, and his remark after Theorem~II.27). Referring to his approach to the existence theorem (Theorem~\ref{thm:simon}), he wrote (\cite{Simon1971}, pp. 58--59):
 \begin{quote}
 ``[T]his theorem (in a slightly different form) is contained in the work of Kisyński~\cite{Kisynski1964}.
 [...]
 Yosida's techniques are also behind Kisyński's approach.
 [...]
 I discovered Kisyński's paper only after completing this proof myself and have not checked carefully the differences, if~any, in~the details of the two proofs.''
 \end{quote}
 The connection between both approaches is not immediate, and even the relation between the necessary conditions taken is not completely obvious.
 To the best of our knowledge, the~details of this connection are yet to be thoroughly~understood.

	The aim of this work is to revise, in~a common language, both approaches to the well-posedness problem for the non-autonomous Schrödinger equation with a constant form domain and (possibly) non-constant operator domain and~to clarify the connections between them.
 The connection between J.~Kisyński's and B.~Simon's is subtle, and~the relationship between their approaches and the necessary conditions taken is not~straightforward.
 
	In showing the explicit relation between both approaches, we revise the state-of-the-art of the problem. Since we did not assume any specific expression for the Hamiltonian $H(t)$, our work may also provide a useful reference for mathematical and theoretical physicists interested in such problems.
	This may inspire, for~example, new developments in the study of quantum Hamiltonians with time-dependent boundary conditions. We applied our results to a simple model with a point-like interaction whose strength depends on time. The~example shows explicitly the conditions imposed by B. Simon and J. Kisyński and highlights the analogies and subtle differences between the~approaches.

The paper is organised as follows. In~Section~\ref{sec:preliminaries}, we study the regularity properties of form-valued and operator-valued functions (Section~\ref{sec:op_form_functions}). To~do this, we recall first some basic notions about operators and forms on a Hilbert space (Section~\ref{sec:hilbert_operators}) and we present the canonical construction of the scale of Hilbert spaces associated with a semibounded self-adjoint operator (Section~\ref{sec:hilbert_scales}). In~Section~\ref{sec:existence_dynamics}, we introduce the non-autonomous Schrödinger equation, both in its strong and weak formulation, for~a Hamiltonian with a constant form domain (Section~\ref{subsec:quantum_dynamics}). Then, we revise B.~Simon's (Section~\ref{subsec:simon}) and J.~Kisyński's (Section~\ref{subsec:kisynski}) approaches to the well-posedness problem for the Schrödinger equation. Finally, in \mbox{Section~\ref{subsec:simon_vs_kisynski}}, we analyse the relations between both approaches using the results obtained in Section~\ref{sec:preliminaries}.

	\section{Regularity of Operator-Valued and Form-Valued~Functions}\label{sec:preliminaries}
	\subsection{Operators and Forms on Hilbert~Spaces} \label{sec:hilbert_operators}
	Let $\mathcal{H}$ be a complex, separable Hilbert space with inner product $\langle \cdot, \cdot \rangle$ and norm $\|\cdot\|$. 
	In this work, we only considered operators $T: \dom T \subset \mathcal{H} \to \mathcal{H}$ densely defined on $\mathcal{H}$.
	Self-adjoint operators semibounded from below are closely related to closed sesquilinear forms; see for instance \cite{Kato1995}, Chapter~VI, and \cite{ReedSimon1980}, Section~VIII.6.
	Throughout this work, we observe the convention establishing that a sesquilinear form is anti-linear on its first argument and linear on the second one.
	We only considered Hermitian sesquilinear forms $h: \dom h \times \dom h \to \mathbb{C}$ defined on dense subspaces of the Hilbert space $\mathcal{H}$: i.e.,~$\dom h$ is a dense subspace of $\mathcal{H}$.%

	For convenience, we state the following consequence of the Uniform Boundedness Principle.
	\begin{Theorem} \label{thm:unif_boundedness_forms}
		Let $\mathcal{H}$ be a Hilbert space, and~let $\mathcal{F}$ be a family of bounded sesquilinear forms on $\mathcal{H}$.
		If for every $\Psi, \Phi \in \mathcal{H}$, the set $\{|h(\Psi, \Phi)| \mid h \in \mathcal{F}\}$ is bounded, then the set:
		\begin{equation*}
			\bigl\{|h(\Psi, \Phi)| \mid \Psi,\Phi \in \mathcal{H},\,\|\Psi\| \leq 1, \|\Phi\| \leq 1, \, h \in \mathcal{F}\bigr\}
		\end{equation*}
		is bounded.
	\end{Theorem}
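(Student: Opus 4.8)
The plan is to reduce this statement about sesquilinear forms to the classical Uniform Boundedness Principle for families of bounded linear operators, via the standard Riesz-type identification between a bounded sesquilinear form and the bounded operator representing it. Concretely, for each $h \in \mathcal{F}$, the boundedness of $h$ means there is $C_h < \infty$ with $|h(\Psi,\Phi)| \le C_h \|\Psi\|\,\|\Phi\|$, and by the Riesz representation theorem there is a unique bounded linear operator $A_h \colon \mathcal{H} \to \mathcal{H}$ with $h(\Psi,\Phi) = \langle \Psi, A_h \Phi\rangle$ for all $\Psi,\Phi$, and $\|A_h\| = \sup\{|h(\Psi,\Phi)| : \|\Psi\|\le 1,\ \|\Phi\|\le1\}$. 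So the set whose boundedness we must prove is exactly $\{\|A_h\| : h \in \mathcal{F}\}$.

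First I would fix $\Phi \in \mathcal{H}$ and consider the family of bounded linear functionals $\Psi \mapsto h(\Psi,\Phi) = \langle \Psi, A_h\Phi\rangle$, i.e.\ the family of vectors $\{A_h \Phi : h \in \mathcal{F}\}$. The hypothesis says that for every $\Psi$ the set $\{|\langle \Psi, A_h\Phi\rangle| : h \in \mathcal{F}\}$ is bounded; since a subset of a Hilbert (hence Banach) space that is weakly bounded is norm bounded (itself a consequence of the Uniform Boundedness Principle applied to the canonical embedding into the bidual), it follows that $\{\|A_h\Phi\| : h \in \mathcal{F}\}$ is bounded, for each fixed $\Phi$. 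In other words, the family of operators $\{A_h : h \in \mathcal{F}\}$ is pointwise (strongly) bounded.

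Then I would apply the Uniform Boundedness Principle a second time, now to the family $\{A_h : h \in \mathcal{F}\}$ of bounded linear operators on the Banach space $\mathcal{H}$: pointwise boundedness on all of $\mathcal{H}$ yields uniform boundedness of the operator norms, i.e.\ $\sup_{h \in \mathcal{F}} \|A_h\| =: M < \infty$. Unwinding the identification, this says precisely that $|h(\Psi,\Phi)| \le \|A_h\|\,\|\Psi\|\,\|\Phi\| \le M$ whenever $\|\Psi\| \le 1$ and $\|\Phi\| \le 1$ and $h \in \mathcal{F}$, which is the claimed bound.

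There is no serious obstacle here; the statement is essentially a repackaging of UBP, and the only points that need a word of care are (i) invoking the Riesz representation for bounded sesquilinear forms to pass from $\mathcal{F}$ to the operator family (and noting the norm equality), and (ii) the two-step structure — first promoting "weakly bounded orbits" to "norm-bounded orbits" for fixed $\Phi$, then promoting "strongly bounded family" to "norm-bounded family" — each step being an application of the Banach–Steinhaus theorem. One could alternatively phrase the whole argument in one shot by viewing $\mathcal{F}$ as a family of bounded bilinear maps and applying a bilinear Banach–Steinhaus statement directly, but the two-step reduction via the representing operators is the most transparent and uses only the textbook form of the principle.
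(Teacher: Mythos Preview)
Your proposal is correct and follows essentially the same approach as the paper: both arguments apply the Uniform Boundedness Principle twice, first to upgrade the pointwise bound with one variable fixed, then to obtain the uniform bound over the unit ball in both variables. The only cosmetic difference is that the paper works directly with the linear functionals $\Phi \mapsto h(\Psi,\Phi)$ and then $\Psi \mapsto \overline{h(\Psi,\Phi)}$, whereas you first pass through the Riesz representation to the operators $A_h$; this extra step is harmless but unnecessary.
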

	\begin{proof}
		For a fixed $\Psi \in \mathcal{H}$, consider the family of bounded operators from $\mathcal{H}$ to $\mathbb{C}$:
		\begin{equation*}
			\mathcal{T}_\Psi = \{\Phi \mapsto h(\Psi,\Phi) \mid h \in \mathcal{F}\}.
		\end{equation*}
		By
 assumption, for~every $\Psi, \Phi \in \mathcal{H}$, there is a constant $K_{\Psi,\Phi} > 0$ such that $|h(\Psi, \Phi)| \leq K_{\Psi,\Phi}$.
		Therefore, by~the Uniform Boundedness Principle, the~set:
		\begin{equation*}
			% \{\|T\| \mid T \in \mathcal{T}_\Psi\} =
			\{|h(\Psi, \Phi)| \mid \Phi \in \mathcal{H},\,\|\Phi\| \leq 1, \, h \in \mathcal{F}\}
		\end{equation*}
		is bounded.
		Applying again the Uniform Boundedness Principle to the family of bounded operators $\{\Psi \mapsto \overline{h(\Psi,\Phi)} \mid \Phi \in \mathcal{H},\,\|\Phi\| \leq 1, \, h \in \mathcal{F}\}$, the~result follows.
	\end{proof}

	An important concept to relate sesquilinear forms with operators is the notion of closed and semibounded quadratic form. Recall that a Hermitian sesquilinear form is said to be \textit{semibounded} if there exists $m>0$ such that $h(\Phi,\Phi) > -m \norm{\Phi}^2$ for all $\Phi\in\dom h$; in such a case, $m$ is its~semibound.

	\begin{Definition}\label{def:graph-norm}
		Let $h$ be a semibounded Hermitian sesquilinear form with dense domain $\dom h$, and~let $m$ be the semibound of $h$.
		We define the {graph norm} of the sesquilinear form $h$ by:
		\begin{equation*}
			\|\Phi\|_h:=\sqrt{(1+m)\|\Phi\|^2+h(\Phi,\Phi)},\quad \Phi\in\dom h.
		\end{equation*}
		We say that $h$ is {closed} if $\dom h$ is closed with respect to the graph norm $\|\cdot\|_h$.
	\end{Definition}

	We recall next an important result; cf. \cite{Kato1995}, Section~VI.2.
	\begin{Theorem}[Representation theorem]\label{thm:repKato}
		Let $h$ be a Hermitian, closed, semibounded sesquilinear form with dense domain $\dom h \subset \mathcal{H}$.
		Then, there exists a unique, self-adjoint, semibounded operator $T$ with domain $\mathcal{D}$ and the same lower bound, such~that:
		\begin{enumerate}[label=\textit{(\roman*)},nosep, leftmargin=*]
			\item $\Phi \in \mathcal{D}$ if and only if $\Phi \in \dom h$ and there exists
			$\chi \in \mathcal{H}$ such that:
			\begin{equation*}
				h(\Psi, \Phi) = \langle \Psi, \chi \rangle, \qquad \forall \Psi \in \dom h;
			\end{equation*}
			\item $h(\Psi, \Phi) = \langle \Psi, T\Phi \rangle$ for any $\Psi \in \dom h$,
			$\Phi \in \mathcal{D}$;
			\item $\mathcal{D}$ is a core for $h$, that is
			$\overline{\mathcal{D}}^{\|\cdot\|_h} = \dom h$.
		\end{enumerate}
	\end{Theorem}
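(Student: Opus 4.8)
The plan is the classical Friedrichs--Riesz argument: equip $\dom h$ with the graph norm to obtain an auxiliary Hilbert space, represent its continuous embedding into $\mathcal{H}$ by a bounded positive operator via the Riesz lemma, and define $T$ as a shift of the inverse of that operator.

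First I would pass to the strictly positive case. Writing $m$ for the semibound and setting $\langle \Psi, \Phi\rangle_h := (1+m)\langle\Psi,\Phi\rangle + h(\Psi,\Phi)$ for $\Psi,\Phi\in\dom h$, this is an inner product on $\dom h$ inducing exactly the graph norm $\|\cdot\|_h$ of Definition~\ref{def:graph-norm}, and $\|\Phi\|_h\geq\|\Phi\|$ for all $\Phi$. Since $h$ is closed, $\mathcal{H}_h:=(\dom h,\langle\cdot,\cdot\rangle_h)$ is a Hilbert space, and the inclusion $\mathcal{H}_h\hookrightarrow\mathcal{H}$ is a continuous, injective map with dense range. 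Then, for each $\chi\in\mathcal{H}$ the functional $\Phi\mapsto\langle\chi,\Phi\rangle$ is bounded and linear on $\mathcal{H}_h$ with norm $\leq\|\chi\|$, so the Riesz representation theorem in $\mathcal{H}_h$ yields a unique $J\chi\in\mathcal{H}_h$ with $\langle\chi,\Phi\rangle=\langle J\chi,\Phi\rangle_h$ for all $\Phi\in\dom h$. Regarded as an operator on $\mathcal{H}$, $J$ is linear and bounded with $\|J\|\leq1$; pairing the defining identity against $J\psi$ and using Hermitian symmetry of $\langle\cdot,\cdot\rangle_h$ gives $J=J^{*}$ and $0\leq J\leq I$; and $J$ is injective because $\dom h$ is dense. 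Hence $\range J$ is dense, $A:=J^{-1}$ with domain $\mathcal{D}:=\range J$ is self-adjoint with $A\geq I$, and $T:=A-(1+m)I$ on $\mathcal{D}$ is self-adjoint and bounded below.

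I would then verify (i)--(iii). For (ii), if $\Phi=J\chi\in\mathcal{D}$ then for every $\Psi\in\dom h$,
\[
 \langle\Psi,T\Phi\rangle=\langle\Psi,\chi\rangle-(1+m)\langle\Psi,\Phi\rangle=\langle\Psi,\Phi\rangle_h-(1+m)\langle\Psi,\Phi\rangle=h(\Psi,\Phi).
\]
In (i) the forward implication is immediate with $\chi=T\Phi$; conversely, if $\Phi\in\dom h$ and $h(\Psi,\Phi)=\langle\Psi,\chi\rangle$ for all $\Psi$, then $\langle\Psi,\Phi\rangle_h=\langle\Psi,(1+m)\Phi+\chi\rangle=\langle\Psi,J((1+m)\Phi+\chi)\rangle_h$ for all $\Psi\in\mathcal{H}_h$, whence $\Phi=J((1+m)\Phi+\chi)\in\range J=\mathcal{D}$. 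For (iii), any $\Psi\in\mathcal{H}_h$ that is $\langle\cdot,\cdot\rangle_h$-orthogonal to $\range J$ satisfies $0=\langle J\chi,\Psi\rangle_h=\langle\chi,\Psi\rangle$ for all $\chi\in\mathcal{H}$, so $\Psi=0$ and $\mathcal{D}$ is dense in $\mathcal{H}_h$, i.e.\ a core for $h$. The equality of lower bounds then follows since $\langle\Phi,T\Phi\rangle=h(\Phi,\Phi)$ on the core $\mathcal{D}$, so the Rayleigh quotients of $T$ are $\|\cdot\|_h$-dense among those of $h$ and the two infima coincide. Uniqueness: if $S$ is self-adjoint with $\dom S\subset\dom h$ and $h(\Psi,\Phi)=\langle\Psi,S\Phi\rangle$ for $\Psi\in\dom h$, $\Phi\in\dom S$, then each $\Phi\in\dom S$ meets the criterion of (i) with $\chi=S\Phi$, so $\Phi\in\mathcal{D}$ and $T\Phi=S\Phi$; thus $S\subseteq T$, and maximality of self-adjoint operators forces $S=T$.

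The main obstacle is bookkeeping: keeping the ambient norm $\|\cdot\|$ and the graph norm $\|\cdot\|_h$ on $\dom h$ rigorously apart while shuttling between them, applying Riesz in $\mathcal{H}_h$ consistently with the convention that forms and inner products are anti-linear in the first slot, and checking the symmetry and positivity of $J$ with the correct conjugations. The closedness hypothesis enters at exactly one point, namely to guarantee that $\mathcal{H}_h$ is complete so that Riesz applies; and the only verification that goes beyond routine bookkeeping is that the lower bound of $T$ coincides with that of $h$ rather than merely being finite, which is precisely where property (iii) is needed.
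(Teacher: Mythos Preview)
Your argument is the standard Friedrichs--Riesz construction and is correct as written; the bookkeeping with $J$, the identification $T=J^{-1}-(1+m)I$, and the verification of (i)--(iii), the lower bound, and uniqueness all go through. Note, however, that the paper does not supply its own proof of this statement: it is quoted as a classical result with the reference ``cf.\ \cite{Kato1995}, Section~VI.2'', so there is nothing in the paper to compare your route against beyond observing that your proof is essentially the one found in Kato's book.
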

	Note that this theorem establishes a one-to-one correspondence between closed, semibounded Hermitian sesquilinear forms and semibounded self-adjoint operators and~motivates the following definition.
	\begin{Definition} \label{def:representing_op}
		Let $h$ be a closed, semibounded, Hermitian sesquilinear form.
		The operator $T$ given in Theorem~\ref{thm:repKato} is said to be the operator {representing} $h$.
		Conversely, $h$ is called the sesquilinear form {represented by} $T$.
	\end{Definition}

	\subsection{Scales of Hilbert~Spaces} \label{sec:hilbert_scales}
	The notion of scales of Hilbert spaces (also known as \emph{Gelfand triples}) plays a central role in this article. We review the basic ideas of the construction in this subsection.
	Details and proofs of the following statements can be found for instance in \cite{Berezanskii1968}, Ch. I.

	Let $\mathcal{H}^+ \subset \mathcal{H}$ be a dense subspace of the Hilbert space $\mathcal{H}$, and~let $\langle \cdot, \cdot \rangle_+$ be an inner product endowing $\mathcal{H}^+$ with the structure of a Hilbert space and such that the associated norm, $\|\cdot\|_+$, satisfies:
	\begin{equation*}
		\|\Phi\| \leq \|\Phi\|_+, \qquad \Phi \in \mathcal{H}^+.
	\end{equation*}
	By the Riesz representation theorem, the~restriction of the inner product of $\mathcal{H}$ on $\mathcal{H}^+$ can be represented using the inner product in $\mathcal{H}^+$, i.e.,~there exists an operator $\hatjay: \mathcal{H} \to \mathcal{H}^+$ such that:
	\begin{equation*}
		\langle \Psi, \Phi \rangle = \langle \hatjay \Psi, \Phi \rangle_+, \qquad
		\Psi \in \mathcal{H}, \Phi \in \mathcal{H}^+.
	\end{equation*}
	This operator is injective and~allows defining another inner product on $\mathcal{H}$,
	\begin{equation*}
		\langle \cdot, \cdot \rangle_- \coloneqq \langle \hatjay\cdot, \hatjay\cdot \rangle_+.
	\end{equation*}
	Let $\mathcal{H}^-$ be the completion of $\mathcal{H}$ with respect to the norm $\|\cdot\|_-$ associated with $\langle \cdot, \cdot \rangle_-$.
	The operator $\hatjay$ can be extended by continuity to an isometric bijection $J: \mathcal{H}^- \to \mathcal{H}^+$.
	The spaces $\mathcal{H}, \mathcal{H}^\pm$ form the scale of Hilbert spaces $\mathcal{H}^+ \subset \mathcal{H} \subset \mathcal{H}^-$.

	Finally, since:
	\begin{equation*}
		|\langle \Psi, \Phi \rangle| = |\langle J\Psi, \Phi \rangle_+| \leq \|J\Psi\|_+ \|\Phi\|_+
		= \|\Psi\|_- \|\Phi\|_+, \quad
		\Psi \in \mathcal{H}, \Phi \in \mathcal{H}^+,
	\end{equation*}
	the inner product on $\mathcal{H}$ can be continuously extended to a pairing:
	\begin{equation*}
		(\cdot, \cdot): \mathcal{H}^- \times \mathcal{H}^+ \cup \mathcal{H}^+ \times \mathcal{H}^- \to \mathbb{C}.
	\end{equation*}
	Note also that, by~definition,
	\begin{equation*}
		\langle \Psi, \Phi \rangle_{\pm} = (\Psi, \hatjay^{\mp1} \Phi), \qquad \Psi,\Phi \in \mathcal{H}^\pm.
	\end{equation*}

	Let us introduce now the scale of Hilbert spaces associated with a sesquilinear form.
	Let $h: \mathcal{H}^+ \times \mathcal{H}^+ \to \mathbb{C}$ be a Hermitian, strictly positive sesquilinear form such that $\mathcal{H}^+$ is complete with respect to the norm induced by the inner product $\langle \cdot, \cdot \rangle_+ \coloneqq h(\cdot, \cdot)$ and satisfying:
	\begin{equation*}
		\|\Phi\|^2 \leq h(\Phi, \Phi), \qquad \Phi \in \mathcal{H}^+.
	\end{equation*}
	The construction above can therefore be applied to define an associated scale of Hilbert spaces $\mathcal{H}^+ \subset \mathcal{H} \subset \mathcal{H}^-$.

	Let $H$ be the positive, self-adjoint operator representing $h$, that is,
	\begin{equation*}
		\langle \Psi, \Phi \rangle_+ = h(\Psi, \Phi) = \langle \Psi, H\Phi \rangle, \quad
	\end{equation*}
	for all $\Psi \in \mathcal{H}^+$ and $\Phi \in \dom H$.
	Note that, if~$H$ is strictly positive, $H^{-1} \in \mathcal{B}(\mathcal{H})$ is well defined and $\hatjay = H^{-1}$.
	Therefore, the~operators $H, H^{-1}$ can be extended to $\tilde{H} = J^{-1} \in \mathcal{B}(\mathcal{H}^+, \mathcal{H}^-)$ and $\tilde{H}^{-1} = J \in \mathcal{B}(\mathcal{H}^-, \mathcal{H}^+)$.
	To simplify the notation, we denote these extensions and the original operators by the same symbols, $H$ and $H^{-1}$.
	Note also that, since $H$ is self-adjoint and positive, then $H^{\pm\sfrac{1}{2}}$ are well-defined and the following identities hold:
	\begin{equation*}
		H^{\pm\sfrac{1}{2}} \mathcal{H}^\pm = \mathcal{H}, \qquad H^{\pm\sfrac{1}{2}}\mathcal{H} = \mathcal{H}^{\mp}
	\end{equation*}
and
	\begin{equation*}
		\langle \Psi, \Phi \rangle_{\pm} = \langle H^{\pm\sfrac{1}{2}}\Psi, H^{\pm\sfrac{1}{2}}\Phi \rangle,
	\end{equation*}
	for $\Phi,\Psi$ in the appropriate~spaces. 

	This discussion leads to the following definition.
	\begin{Definition} \label{def:hilbert-scales-h}
		Let $\mathcal{H}$ be a Hilbert space with associated inner product $\langle \cdot, \cdot \rangle$, $\mathcal{H}^+ \subset \mathcal{H}$ a dense subspace, and~let $h: \mathcal{H}^+ \times \mathcal{H}^+ \to \mathbb{C}$ be a Hermitian, strictly positive, closed sesquilinear form.
		Denote by $H$ the strictly positive self-adjoint operator representing $h$ (cf.\ Definition~\ref{def:representing_op}), and~define the inner products:
		\begin{equation*}
			\langle \Psi, \Phi \rangle_\pm = (\Psi, H^{\pm 1}\Phi) \qquad \Psi, \Phi \in \mathcal{H}^{\pm},
		\end{equation*}
where $\mathcal{H}^-$ is the closure of $\mathcal{H}$ with respect to the norm $\|\cdot\|_-$ induced by $\langle \cdot, \cdot \rangle_-$.
		The {{scale of Hilbert spaces associated with} $h$} %MDPI: is the normal necessary in Definition? please check all normal format in Definition, Lamma, Theorem, ....
 is the scale:
		\begin{equation*}
			(\mathcal{H}^+, \langle \cdot, \cdot \rangle_+)
			\subset (\mathcal{H}, \langle \cdot, \cdot \rangle)
			\subset (\mathcal{H}^-, \langle \cdot, \cdot \rangle_-).
		\end{equation*}
		The {{scale of Hilbert spaces associated with a semibounded sesquilinear form}} $\tilde{h}$ with semibound $m$ is the scale associated with the strictly positive sesquilinear form $h(\Psi, \Phi) = \tilde{h}(\Psi, \Phi) + m + 1$.
	\end{Definition}
	The scale of Hilbert spaces associated with a semibounded self-adjoint operator will be the scale of Hilbert spaces associated with the unique quadratic form representing~it.

	Let us end this subsection with a useful result on families of scales of Hilbert spaces sharing a common domain $\mathcal{H}^+$.
	\begin{Theorem} \label{thm:eqiv+_equiv-}
		Let $\mathcal{H}^+$ be a dense subset of a Hilbert space $\mathcal{H}$, and let $\langle \cdot, \cdot \rangle_{+,1}$ and $\langle \cdot, \cdot \rangle_{+,2}$ be inner products that give rise to the scales of Hilbert spaces:
		\begin{equation*}
			(\mathcal{H}^+, \langle \cdot, \cdot \rangle_{+,i}) \subset \mathcal{H} \subset (\mathcal{H}^-, \langle \cdot, \cdot \rangle_{-,i}),\quad i=1,2.
		\end{equation*}
		Denote by $\|\cdot\|_{\pm,i}$ the norm on $\mathcal{H}^\pm$, and~let $c > 0$.
		The following statements are~equivalent:
		\begin{enumerate}[label=\textit{(\roman*)},nosep,leftmargin=*]
			\item \label{item:equiv+}
			For all $\Phi \in \mathcal{H}^+$, $c^{-1} \|\Phi\|_{+,1} \leq \|\Phi\|_{+,2} \leq c \|\Phi\|_{+,1}$;
			\item \label{item:equiv-}
			For all $\Phi \in \mathcal{H}^-$, $c^{-1} \|\Phi\|_{-,1} \leq \|\Phi\|_{-,2} \leq c \|\Phi\|_{-,1}$.
		\end{enumerate}
	\end{Theorem}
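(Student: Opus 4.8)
The plan is to recognise, for each $i\in\{1,2\}$, the space $\mathcal{H}^-$ of the scale indexed by $i$ as the dual Banach space of $(\mathcal{H}^+,\|\cdot\|_{+,i})$ with respect to the canonical pairing $(\cdot,\cdot)$, and symmetrically $\mathcal{H}^+$ as the dual of $(\mathcal{H}^-,\|\cdot\|_{-,i})$; granted this, both implications reduce to the elementary fact that two norms on a vector space are $c$-equivalent if and only if their dual norms are $c$-equivalent.

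Concretely, the key step is to establish the two duality formulae
\begin{gather*}
	\|\Psi\|_{-,i}=\sup_{0\neq\Theta\in\mathcal{H}^+}\frac{|(\Psi,\Theta)|}{\|\Theta\|_{+,i}},\qquad\Psi\in\mathcal{H}^-,\\
	\|\Phi\|_{+,i}=\sup_{0\neq\Xi\in\mathcal{H}^-}\frac{|(\Xi,\Phi)|}{\|\Xi\|_{-,i}},\qquad\Phi\in\mathcal{H}^+.
\end{gather*}
In both, the bound ``$\leq$'' is precisely the continuity estimate $|(\Xi,\Phi)|\leq\|\Xi\|_{-,i}\|\Phi\|_{+,i}$ of the pairing. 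For ``$\geq$'' I would exhibit explicit maximisers using the isometric bijection $J_i\colon\mathcal{H}^-\to\mathcal{H}^+$ that extends $\hatjay_i$: in the first formula one takes $\Theta=J_i\Psi$ and uses $(\Psi,J_i\Psi)=\|\Psi\|_{-,i}^2$ and $\|J_i\Psi\|_{+,i}=\|\Psi\|_{-,i}$; in the second one takes $\Xi=J_i^{-1}\Phi$ and uses $(J_i^{-1}\Phi,\Phi)=\|\Phi\|_{+,i}^2$ and $\|J_i^{-1}\Phi\|_{-,i}=\|\Phi\|_{+,i}$. The two quadratic identities follow from the relation $\langle\Psi,\Phi\rangle_{\pm,i}=(\Psi,\hatjay_i^{\mp1}\Phi)$ recorded in the construction (reading $\hatjay_i^{\mp1}$ as $J_i^{\mp1}$ on $\mathcal{H}^\pm$), together with the Hermiticity of $(\cdot,\cdot)$.

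Granted the duality formulae, the argument is short. If $(i)$ holds, then for $\Psi\in\mathcal{H}$ one has $(\Psi,\Theta)=\langle\Psi,\Theta\rangle$, and inserting $c^{-1}\|\Theta\|_{+,1}\leq\|\Theta\|_{+,2}\leq c\|\Theta\|_{+,1}$ into the first formula gives $c^{-1}\|\Psi\|_{-,1}\leq\|\Psi\|_{-,2}\leq c\|\Psi\|_{-,1}$ for all $\Psi\in\mathcal{H}$; since $\mathcal{H}$ is dense in each completion $\mathcal{H}^-$ and these norms are $c$-equivalent on it, the two completions are canonically identified and the estimate extends by continuity, which is $(ii)$. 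Conversely, if $(ii)$ holds, inserting $c^{-1}\|\Xi\|_{-,1}\leq\|\Xi\|_{-,2}\leq c\|\Xi\|_{-,1}$ into the second formula gives directly $c^{-1}\|\Phi\|_{+,1}\leq\|\Phi\|_{+,2}\leq c\|\Phi\|_{+,1}$ for every $\Phi\in\mathcal{H}^+$, which is $(i)$.

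The insertions are purely formal, so the main obstacle is verifying the two duality formulae: one must unwind the definition of $\mathcal{H}^-$, the extension $J_i$ of $\hatjay_i$ and the canonical pairing (in particular the identity $\langle\Psi,\Phi\rangle_{\pm,i}=(\Psi,\hatjay_i^{\mp1}\Phi)$) carefully enough to see that each supremum is attained. A variant that sidesteps this bookkeeping is to let $H_i\geq1$ be the self-adjoint operator representing $h_i(\cdot,\cdot)\coloneqq\langle\cdot,\cdot\rangle_{+,i}$, so that $\|\cdot\|_{\pm,i}=\|H_i^{\pm1/2}\,\cdot\,\|$; then $(i)$ asserts that the bounded operator $A\coloneqq H_2^{1/2}H_1^{-1/2}\in\mathcal{B}(\mathcal{H})$ satisfies $\max\{\|A\|,\|A^{-1}\|\}\leq c$, statement $(ii)$ asserts the same for $B\coloneqq H_2^{-1/2}H_1^{1/2}=(A^{*})^{-1}$, and the equivalence follows from $\|A\|=\|B^{-1}\|$ and $\|A^{-1}\|=\|B\|$.
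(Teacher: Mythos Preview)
Your proof is correct, and your primary route via the duality formulae is genuinely different from the paper's. The paper proceeds operationally: it takes the positive square roots $A_i$ of the representing operators (so $\|\cdot\|_{+,i}=\|A_i\cdot\|$ on $\mathcal{H}^+$), forms $T\coloneqq A_1A_2^{-1}\in\mathcal{B}(\mathcal{H})$ via the closed graph theorem, and observes that \textit{(i)} amounts to $\|T^{\pm1}\|\leq c$ while \textit{(ii)} amounts to $\|T^{\pm\dagger}\|\leq c$, whence the equivalence follows from $\|T^{\pm1}\|=\|T^{\pm\dagger}\|$. This is precisely your closing variant, with $A_i=H_i^{1/2}$ and $T=A^{-1}$ in your notation. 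Your main argument instead recognises $(\mathcal{H}^-,\|\cdot\|_{-,i})$ as the antidual of $(\mathcal{H}^+,\|\cdot\|_{+,i})$ through the pairing and reduces the statement to the elementary fact that $c$-equivalence of norms passes to dual norms. This is more conceptual and would transfer verbatim to Banach-space scales where no square roots are available; the cost is the bookkeeping needed to verify that the suprema are attained (which you handle via $J_i$) and the density step identifying the two completions $\mathcal{H}^-$. The paper's approach is slightly quicker in the Hilbert setting because the operators $H_i^{1/2}$ are ready-made, but it leaves the set-theoretic identification of the two $\mathcal{H}^-$ spaces implicit, whereas you address it explicitly.
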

	\begin{proof}
		Let $A_i$ be the strictly positive self-adjoint operator with domain $\mathcal{H}^+$ such that:
		\begin{equation*}
			\langle \Psi, \Phi \rangle_{+,i} = \langle A_i \Psi, A_i \Phi \rangle,\quad
			i = 1, 2.
		\end{equation*}
		By the closed graph theorem, the~operator defined by $T \coloneqq A_1 A_2^{-1}$ is a bounded operator on $\mathcal{H}$; therefore, given $\Phi\in\mathcal{H}^+$,
		\begin{equation*}
			\|\Phi\|_{+,1} = \|A_1A_2^{-1}A_2\Phi\|
			\leq \|T\| \|A_2\Phi\| = \|T\| \|\Phi\|_{+,2}.
		\end{equation*}
		Analogously, one can obtain $\|\Psi\|_{+,2} \leq \|T^{-1}\| \|\Psi\|_{+,1}$ for $\Psi \in \mathcal{H}^+$.

		Using the adjoint of $T$, $T^\dagger = A_2^{-1} A_1$, one can prove similar inequalities for the norms $\|\cdot\|_{-,i}$, $i=1,2$:
\begin{equation} \label{eq:Tdagger_minus_norm}
			\|\Phi\|_{-,1} \leq \|T^{-\dagger}\| \|\Phi\|_{-,2}, \quad
			\|\Phi\|_{-,2} \leq \|T^\dagger\| \|\Phi\|_{-,1}.
		\end{equation}

		If \ref{item:equiv+} holds, one has:
		\begin{equation*}
			\|T\Phi\| = \|A_2^{-1} \Phi\|_{+,1} \leq c \|A_2^{-1}\Phi\|_{+,2} = c \|\Phi\|.
		\end{equation*}
		Similarly, it follows that $\|T^{-1}\| \leq c$, and~therefore, $\|T^{\pm \dagger}\| = \|T^{\pm 1}\| \leq c$.
		By Equation~\eqref{eq:Tdagger_minus_norm}, \ref{item:equiv-} follows.

		The other implication, i.e.,\ \ref{item:equiv-} implies \ref{item:equiv+}, is proven analogously.
	\end{proof}

	\subsection{Regularity of Operator-Valued and Form-Valued~Functions} \label{sec:op_form_functions}{\strut}
	In the statement of the following results, $\mathcal{H}^+$ is a Hilbert space with norm $\norm{\cdot}_+$ and inner product $\langle{\cdot},{\cdot}\rangle_+$, not necessarily part of a scale of Hilbert spaces. We use this notation to ease the reading, as later we use these results in the case that $\mathcal{H}^+$ is part of a scale. Let $I \subset \mathbb{R}$ be a real interval, and let $\mathcal{V} = \{v_t\}_{t \in I}$ be a family of bounded sesquilinear forms on $\mathcal{H}^+$.
	The aim of this subsection is to investigate the relationship between the regularity of the functions $t \in I \mapsto v_t(\Psi,\Phi) \in \mathbb{C}$ for $\Psi, \Phi \in \mathcal{H}^+$ fixed and the regularity of the form-valued functions $t \in I \mapsto v_t \in \mathcal{V}$.
	Let us first introduce some technical lemmas.
	\begin{Lemma} \label{lemma:sup-cont}
		Let $\mathcal{F}$ be an equicontinuous family of functions from a Hilbert space $\mathcal{H}^+$ to $\mathbb{C}$.
		Then, the function $f: \Phi \in \mathcal{H}^+ \mapsto \sup_{F \in \mathcal{F}} |F(\Phi)| \in \mathbb{C}$ is continuous.
	\end{Lemma}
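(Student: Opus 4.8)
The plan is to control the oscillation of $f$ directly by the uniform (over $\mathcal{F}$) oscillation of the individual functions $F$, so that equicontinuity of $\mathcal{F}$ is the only ingredient needed. The starting point is the elementary estimate, valid for every $\Phi,\Psi\in\mathcal{H}^+$ and every $F\in\mathcal{F}$,
\begin{equation*}
	|F(\Phi)| \leq |F(\Psi)| + |F(\Phi)-F(\Psi)| \leq f(\Psi) + \sup_{G\in\mathcal{F}}|G(\Phi)-G(\Psi)|.
\end{equation*}
Taking the supremum over $F\in\mathcal{F}$ on the left-hand side gives $f(\Phi) \leq f(\Psi) + \sup_{F\in\mathcal{F}}|F(\Phi)-F(\Psi)|$, and exchanging the roles of $\Phi$ and $\Psi$ yields the symmetric bound; together they give
\begin{equation*}
	|f(\Phi) - f(\Psi)| \leq \sup_{F\in\mathcal{F}}|F(\Phi)-F(\Psi)|.
\end{equation*}
This reduces continuity of $f$ at a point $\Psi$ to showing that the right-hand side tends to $0$ as $\Phi\to\Psi$.

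From there I would fix $\Psi\in\mathcal{H}^+$ and $\varepsilon>0$ and invoke equicontinuity of $\mathcal{F}$ at $\Psi$: there is $\delta>0$ such that $|F(\Phi)-F(\Psi)|<\varepsilon$ for all $F\in\mathcal{F}$ whenever $\norm{\Phi-\Psi}_+<\delta$. Passing to the supremum over $F$ gives $\sup_{F\in\mathcal{F}}|F(\Phi)-F(\Psi)|\leq\varepsilon$, and the displayed inequality then gives $|f(\Phi)-f(\Psi)|\leq\varepsilon$ on that ball, which is exactly continuity of $f$ at $\Psi$.

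The one point worth a remark is that, a priori, $f(\Phi)=\sup_{F\in\mathcal{F}}|F(\Phi)|$ could be $+\infty$; the statement implicitly presumes $f$ is finite (it is asserted to be $\mathbb{C}$-valued), and this is in fact automatic once $f$ is finite at a single point. Indeed, the same equicontinuity argument applied with $\varepsilon=1$ shows that the set $\{\Phi\in\mathcal{H}^+ : f(\Phi)<\infty\}$ is open (a finite value at $\Psi$ forces $f\leq f(\Psi)+1$ on a ball around $\Psi$) and closed (if $\Phi_n\to\Phi$ with $f(\Phi_n)<\infty$, then eventually $f(\Phi)\leq f(\Phi_n)+1<\infty$), hence equals all of $\mathcal{H}^+$ by connectedness of the vector space $\mathcal{H}^+$. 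I do not expect any genuine obstacle here: the only care required is the elementary inequality $\sup_F(a_F+b_F)\leq\sup_F a_F+\sup_F b_F$ used in the first display, together with the symmetric bookkeeping in $\Phi$ and $\Psi$.
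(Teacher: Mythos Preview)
Your proof is correct and follows essentially the same route as the paper: both bound $|f(\Phi)-f(\Psi)|$ by a supremum over $\mathcal{F}$ of pointwise oscillations (the paper uses $\sup_F\bigl||F(\Phi)|-|F(\Psi)|\bigr|$ where you use the slightly larger $\sup_F|F(\Phi)-F(\Psi)|$), and then invoke equicontinuity. Your remark on finiteness of $f$ is a nice addition, though the statement already takes this for granted.
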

	\begin{proof}
		Let $\Phi, \Phi_0 \in \mathcal{H}^+$.
		It holds that:
		\begin{alignat*}{2}
			\sup_{F \in \mathcal{F}} |F(\Phi)| &= \sup_{F \in \mathcal{F}} [|F(\Phi)| - |F(\Phi_0)| + |F(\Phi_0)|]\\
			&\leq \sup_{F \in \mathcal{F}} [|F(\Phi)| - |F(\Phi_0)|] + \sup_{F \in \mathcal{F}} |F(\Phi_0)|,
		\end{alignat*}
		Combining this inequality with the one obtained interchanging the roles of $\Phi$ and $\Phi_0$, it follows that:
		\begin{equation*}
			\left|\sup_{F \in \mathcal{F}} |F(\Phi)| - \sup_{F \in \mathcal{F}} |F(\Phi_0)|\right|
			\leq \sup_{F \in \mathcal{F}} \left||F(\Phi)| - |\vphantom{\lim_2}F(\Phi_0)|\right|.
		\end{equation*}

		Therefore, one has:
		\begin{equation*}
			|f(\Phi) - f(\Phi_0)| = \left|\sup_{F \in \mathcal{F}} |F(\Phi)| - \sup_{F \in \mathcal{F}} |F(\Phi_0)|\right|
			\leq \sup_{F \in \mathcal{F}} \left|\vphantom{\lim_2}|F(\Phi)| - |F(\Phi_0)|\right|.
		\end{equation*}
		From this inequality and the equicontinuity of $\mathcal{F}$, the result follows.
	\end{proof}

	\begin{Lemma} \label{lemma:locally_unif_bound}
		For every $t \in I \subset \mathbb{R}$, let $v_t: \mathcal{H}^+ \times \mathcal{H}^+\to\mathbb{C}$ be a bounded sesquilinear form.
		Let $t_0 \in I$, and suppose that $\lim_{t \to t_0} v_t(\Psi, \Phi)$ exists for every $\Psi, \Phi \in \mathcal{H}^+$.
		Then, there is a neighbourhood $B_{t_0}$ of $t_0$ and a constant $K$ such that:
		\begin{equation*}
			|v_t(\Psi,\Phi)| \leq K \|\Psi\|_+ \|\Phi\|_+, \qquad
			\forall \Psi,\Phi \in \mathcal{H}^+, \quad \forall t \in B_{t_0}.
		\end{equation*}
		Moreover, $L(\Psi, \Phi) = \lim_{t \to t_0} v_t(\Psi, \Phi)$ defines a bounded sesquilinear form on $\mathcal{H}^+$ with:
		\begin{equation*}
			|L(\Psi,\Phi)| \leq K \|\Psi\|_+ \|\Phi\|_+.
		\end{equation*}
	\end{Lemma}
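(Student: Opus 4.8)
The plan is to obtain the local uniform bound by contradiction, via the uniform boundedness principle for sesquilinear forms (Theorem~\ref{thm:unif_boundedness_forms}), and then to verify that the pointwise limit inherits sesquilinearity and the same bound by a routine passage to the limit.

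First I would note that the hypothesis — existence of $\lim_{t\to t_0} v_t(\Psi,\Phi)$ for every fixed $\Psi,\Phi\in\mathcal{H}^+$ — implies that for every sequence $t_n\to t_0$ in $I$ the scalar sequence $(v_{t_n}(\Psi,\Phi))_n$ converges, hence is bounded; that is, $\{|v_{t_n}(\Psi,\Phi)| : n\in\mathbb{N}\}$ is bounded for each $\Psi,\Phi$. Since $\mathcal{H}^+$ is itself a Hilbert space and each $v_{t_n}$ is a bounded sesquilinear form on it, Theorem~\ref{thm:unif_boundedness_forms} applies to the family $\{v_{t_n}\}_n$ and yields a finite bound $\sup_n \sup\{|v_{t_n}(\Psi,\Phi)| : \|\Psi\|_+\le1,\ \|\Phi\|_+\le1\}<\infty$.

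Next, to produce the neighbourhood, I argue by contradiction. If for every neighbourhood $B$ of $t_0$ the quantity $\sup\{|v_t(\Psi,\Phi)| : t\in B\cap I,\ \|\Psi\|_+\le1,\ \|\Phi\|_+\le1\}$ were infinite, then taking $B=(t_0-1/n,t_0+1/n)$ I could pick, for each $n$, a point $t_n\in I$ with $|t_n-t_0|<1/n$ together with unit vectors witnessing $|v_{t_n}(\Psi_n,\Phi_n)|>n$. This produces a sequence $t_n\to t_0$ along which the forms fail to be uniformly bounded, contradicting the previous step. Hence there exist a neighbourhood $B_{t_0}$ of $t_0$ and a constant $K$ with $|v_t(\Psi,\Phi)|\le K\|\Psi\|_+\|\Phi\|_+$ for all $\Psi,\Phi\in\mathcal{H}^+$ and all $t\in B_{t_0}$.

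Finally, for the statement about $L$: sesquilinearity of $L(\Psi,\Phi)=\lim_{t\to t_0}v_t(\Psi,\Phi)$ is immediate, since each algebraic identity expressing antilinearity in the first argument and linearity in the second passes to the pointwise limit; and for $\Psi,\Phi$ fixed, letting $t\to t_0$ with $t$ eventually in $B_{t_0}$ gives $|L(\Psi,\Phi)|=\lim_{t\to t_0}|v_t(\Psi,\Phi)|\le K\|\Psi\|_+\|\Phi\|_+$, so $L$ is bounded with the claimed bound. I do not expect any serious obstacle; the only point requiring care is arranging the contradiction so that a single sequence $t_n\to t_0$ simultaneously defeats the conclusion of the uniform boundedness step, and applying Theorem~\ref{thm:unif_boundedness_forms} with $\mathcal{H}^+$ — a Hilbert space in its own right — playing the role of $\mathcal{H}$.
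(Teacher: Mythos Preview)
Your argument is correct and rests on the same tool as the paper's proof, Theorem~\ref{thm:unif_boundedness_forms}. The paper proceeds more directly: it observes that convergence of $v_t(\Psi,\Phi)$ forces $|v_t(\Psi,\Phi)|\le K_{\Psi,\Phi}$ on a neighbourhood $B_{t_0}$, applies Theorem~\ref{thm:unif_boundedness_forms} to the whole family $\{v_t:t\in B_{t_0}\}$ to upgrade this to a uniform bound $K$, and then passes the bound to the limit form $L$. Your route differs only in that you insert a sequential contradiction step to manufacture the neighbourhood, applying Theorem~\ref{thm:unif_boundedness_forms} to a countable family $\{v_{t_n}\}$ instead of directly to $\{v_t:t\in B_{t_0}\}$. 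That extra step actually buys you something: in the paper's phrasing the neighbourhood on which the pointwise bound $K_{\Psi,\Phi}$ holds could a priori depend on $\Psi,\Phi$, a point the paper glosses over, whereas your contradiction argument cleanly sidesteps it by reducing to a single sequence $t_n\to t_0$.
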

	\begin{proof}
		For every $\Psi, \Phi \in \mathcal{H}^+$, the~existence of $\lim_{t \to t_0} v_t(\Psi, \Phi)$ implies that there is a neighbourhood $B_{t_0}$ of $t_0$ and a constant $K_{\Psi,\Phi} > 0$ such that, for~$t \in B_{t_0}$, we have $|v_t(\Psi, \Phi)| \leq K_{\Psi,\Phi}$.
		Therefore, by~Theorem~\ref{thm:unif_boundedness_forms}, there is $K > 0$ such that, for~every $t \in B_{t_0}$ and every $\Psi, \Phi \in \mathcal{H}^+$,
		\begin{equation*}
			|v_t(\Psi,\Phi)| \leq K \|\Psi\|_+ \|\Phi\|_+.
		\end{equation*}
		It is straightforward to check that $L(\Psi, \Phi)$ is a sesquilinear form, and~since the previous bound holds independently of $t \in B_{t_0}$, it follows that $|L(\Psi,\Phi)| \leq K \|\Psi\|_+ \|\Phi\|_+$.
	\end{proof}

	\begin{Lemma} \label{lemma:form_uniform_limit}
		For each $t \in I \subset \mathbb{R}$, let $v_t: \mathcal{H}^+ \times \mathcal{H}^+$ be a bounded sesquilinear form such that $\lim_{t \to t_0} v_t(\Psi, \Phi)$ exists for every $\Psi, \Phi \in \mathcal{H}^+$, and denote by $L$ the bounded sesquilinear form defined by $L(\Psi, \Phi) = \lim_{t \to t_0} v_t(\Psi, \Phi)$.
		Then:
		\begin{equation*}
			\adjustlimits\lim_{t \to t_0} \sup_{\Psi,\Phi \in \mathcal{H}^+ \setminus \{0\}} \frac{|v_t(\Psi, \Phi) - L(\Psi,\Phi)|}{\|\Phi\|_+ \|\Psi\|_+} = 0.
		\end{equation*}
	\end{Lemma}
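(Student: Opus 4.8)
The plan is to recast the claim as a statement about the form-norm of the difference forms $w_t := v_t - L$. By Lemma~\ref{lemma:locally_unif_bound}, applied either directly to the family $\{w_t\}_{t\in I}$ (whose pointwise limits at $t_0$ all vanish) or by combining the bounds for $\{v_t\}$ and for $L$, there are a neighbourhood $B_{t_0}$ of $t_0$ and a constant $K>0$ with $|w_t(\Psi,\Phi)|\le K\|\Psi\|_+\|\Phi\|_+$ for all $t\in B_{t_0}$ and all $\Psi,\Phi\in\mathcal{H}^+$, while $w_t(\Psi,\Phi)\to0$ as $t\to t_0$ for each fixed pair. It then remains to show that $\|w_t\|_+:=\sup\bigl\{|w_t(\Psi,\Phi)|/(\|\Psi\|_+\|\Phi\|_+):\Psi,\Phi\in\mathcal{H}^+\setminus\{0\}\bigr\}\to0$.

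I would proceed by contradiction. If $\|w_t\|_+\not\to0$, there are $\varepsilon>0$, a sequence $t_n\to t_0$ with $t_n\in B_{t_0}\setminus\{t_0\}$, and unit vectors $\Psi_n,\Phi_n\in\mathcal{H}^+$ with $|w_{t_n}(\Psi_n,\Phi_n)|\ge\varepsilon$. Writing $w_{t_n}(\Psi_n,\Phi_n)=w_{t_n}(\Psi_n-\Psi,\Phi_n)+w_{t_n}(\Psi,\Phi_n-\Phi)+w_{t_n}(\Psi,\Phi)$ and using the uniform bound, for any fixed $\Psi,\Phi\in\mathcal{H}^+$ one obtains
\[
 |w_{t_n}(\Psi_n,\Phi_n)|\;\le\;K\bigl(\|\Psi_n-\Psi\|_++\|\Phi_n-\Phi\|_+\bigr)+|w_{t_n}(\Psi,\Phi)| .
\]
Thus it would be enough to approximate $\Psi_n,\Phi_n$ by \emph{fixed} vectors in $\|\cdot\|_+$: the last term then tends to $0$ with $n$ by the pointwise convergence, contradicting $|w_{t_n}(\Psi_n,\Phi_n)|\ge\varepsilon$.

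The hard part will be precisely this approximation, because the closed unit ball of $\mathcal{H}^+$ is not norm-compact, so $(\Psi_n)$ and $(\Phi_n)$ need not have strongly convergent subsequences. To handle it I would lean on the equicontinuity supplied by the uniform bound: for $t\in B_{t_0}$ and $\|\Phi\|_+\le1$ the maps $\Psi\mapsto w_t(\Psi,\Phi)$ share the modulus of continuity $K\|\cdot\|_+$, so by Lemma~\ref{lemma:sup-cont} the partial suprema $g_t(\Psi):=\sup_{\|\Phi\|_+\le1}|w_t(\Psi,\Phi)|$ are continuous in $\Psi$ and the family $\{g_t\}_{t\in B_{t_0}}$ is again equicontinuous (indeed $K$-Lipschitz), with $\|w_t\|_+=\sup_{\|\Psi\|_+\le1}g_t(\Psi)$. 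The genuinely delicate points, where the hypotheses must really be brought to bear and pointwise information is upgraded to uniform information, are then: (i) to prove $g_t(\Psi)\to0$ for each fixed $\Psi$ --- i.e.\ to promote the pointwise convergence of $w_t$ in its second argument to norm convergence of the associated functionals $\Phi\mapsto w_t(\Psi,\Phi)$; and (ii) to pass from pointwise decay of the equicontinuous family $\{g_t\}$ to uniform decay over the unit ball, via a reduction to finitely many test vectors. Establishing (i)--(ii) yields $\sup_{\|\Psi\|_+\le1}g_t(\Psi)\to0$, contradicting the choice of $t_n,\Psi_n,\Phi_n$ and completing the argument.
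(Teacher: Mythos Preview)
Your reduction to the difference forms $w_t$ and the uniform bound via Lemma~\ref{lemma:locally_unif_bound} is correct, and the contradiction framework is reasonable. However, the proposal stops precisely at the two places where all the work lies: steps (i) and (ii) are identified but not carried out, and your hints (``promote pointwise to norm convergence'', ``reduction to finitely many test vectors'') do not supply a mechanism. In particular, step (i)---showing $g_t(\Psi)=\sup_{\|\Phi\|_+\le1}|w_t(\Psi,\Phi)|\to0$ for fixed $\Psi$---is the assertion that a family of uniformly bounded linear functionals converging pointwise to zero converges in norm; this is precisely the one-variable version of the lemma itself, so the reduction has not made progress on the essential difficulty. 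The $K$-Lipschitz equicontinuity of $\{g_t\}$ you record is \emph{norm}-equicontinuity and cannot by itself yield a finite reduction over the norm-noncompact unit ball.

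The paper does not argue by contradiction but attacks (i) (and then (ii)) directly via the \emph{weak} compactness of the closed unit ball. For fixed $\Psi_0$ and each $\Phi_0$ in the ball, one first chooses $\delta=\delta(\Phi_0)$ with $|w_t(\Psi_0,\Phi_0)|<\varepsilon/2$ for $t\in B_{t_0}(\delta)$, then sets $f_{\Phi_0}(\Phi)=\sup_{t\in B_{t_0}(\delta)}|w_t(\Psi_0,\Phi-\Phi_0)|$ and argues---using the equicontinuity from Lemma~\ref{lemma:locally_unif_bound} together with Lemma~\ref{lemma:sup-cont}---that $f_{\Phi_0}$ is continuous and hence (so the paper asserts) weakly continuous, making $U_{\Phi_0}=f_{\Phi_0}^{-1}\bigl([0,\varepsilon/2)\bigr)$ a weak neighbourhood of $\Phi_0$ on which $|w_t(\Psi_0,\Phi)|<\varepsilon$ for $t\in B_{t_0}(\delta)$. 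A finite weak subcover then produces a single $\delta$ uniform over the ball, and the argument is repeated in $\Psi$. The ingredient your outline is missing is therefore this passage from norm-continuity of the sup-functions to weak openness of their sublevel sets; that is what allows the ``finitely many test vectors'' reduction to go through.
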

	\begin{proof}
		Fix $\Psi_0, \Phi_0 \in \mathcal{H}^+$ such that $\|\Psi_0\|_+ \leq 1$ and $\|\Phi_0\|_+ \leq 1$, and~denote by $B_{t_0}$ the neighbourhood of Lemma \ref{lemma:locally_unif_bound}.
		By the definition of the limit, for~any $\varepsilon > 0$, there exists $\delta$ such that, defining $B_{t_0}(\delta) \coloneqq \{t \in I \mid |t - t_0| < \delta\} \subset B_{t_0}$, for~$t \in B_{t_0}(\delta)$:
		\begin{equation*}
			|v_t(\Psi_0, \Phi_0) - L(\Psi_0, \Phi_0)| < \frac{\varepsilon}{2}.
		\end{equation*}
		Therefore, for~$t \in B_{t_0}(\delta)$,
\begin{equation} \label{eq:form_unif_lim_1}
			|v_t(\Psi_0, \Phi) - L(\Psi_0, \Phi)| \leq
			\sup_{t \in B_{t_0}(\delta)} |v_t(\Psi_0,\Phi-\Phi_0) - L(\Psi_0,\Phi-\Phi_0)|
			+ \frac{\varepsilon}{2}.
		\end{equation}

		Consider the function $f_{\Phi_0}: \mathcal{H}^+ \to [0, \infty)$ defined by:
		\begin{equation*}
			f_{\Phi_0}(\Phi) \coloneqq \sup_{t \in B_{t_0}(\delta)} |v_t(\Psi_0,\Phi-\Phi_0) - L(\Psi_0,\Phi-\Phi_0)|.
		\end{equation*}
		By Lemma \ref{lemma:locally_unif_bound}, the~family of functions $\{\Phi \mapsto v_t(\Psi_0,\Phi-\Phi_0) - L(\Psi_0,\Phi-\Phi_0) \mid t \in B_{t_0}(\delta)\}$ is equicontinuous, and~therefore, Lemma \ref{lemma:sup-cont} implies that $f_{\Phi_0}$ is a continuous map.
		This implies that $f_{\Phi_0}$ is also weakly continuous, and~therefore, $U_{\Phi_0} \coloneqq f_{\Phi_0}^{-1}(\{x\in \mathbb{R}: |x| < {\varepsilon}/{2}\})$ is an open neighbourhood of $\Phi_0$ in the weak topology of $\mathcal{H}^+$.
		By Eq.~\eqref{eq:form_unif_lim_1} and the definition of $U_{\Phi_0}$, for~every $\Phi \in U_{\Phi_0}$ and every $t \in B_{t_0}(\delta)$,
		\begin{equation*}
			|v_t(\Psi_0, \Phi) - L(\Psi_0, \Phi)| \leq \varepsilon.
		\end{equation*}
		That is, for~every $\Psi_0,\Phi_0 \in \mathcal{H}^+$, $\|\Phi_0\|_+ \leq 1$, and $\varepsilon > 0$, there is $\delta$ and a neighbourhood of $\Phi_0$ in the weak topology, $U_{\Phi_0}$, such that, for~every $\Phi \in U_{\Phi_0}$ and $|t - t_0| < \delta$,
		\begin{equation*}
			|v_t(\Psi_0, \Phi) - L(\Psi_0, \Phi)| < \varepsilon.
		\end{equation*}
		The family $\{U_{\Phi}: \|\Phi\|_+ \leq 1,\,\Phi \in \mathcal{H}^+\}$ is an open covering of the closed unit ball on $\mathcal{H}^+$, and by~the weak compacity of the unit ball, there is a finite subcovering $\{U_n\}_{n = 1}^N$.
		It follows that, for~every $U_n$, $1 \leq n \leq N$, there is $\delta_n$ such that, for~$\Phi \in U_n$ and $|t - t_0| < \delta_n$:
		\begin{equation*}
			|v_t(\Psi_0, \Phi) - L(\Psi_0, \Phi)| \leq \varepsilon.
		\end{equation*}
		Therefore, for~$|t - t_0| < \min_n \delta_n$ and every $\Phi \in \mathcal{H}^+$ with $\|\Phi\|_+ \leq 1$, one has:
		\begin{equation*}
			|v_t(\Psi_0, \Phi) - L(\Psi_0, \Phi)| \leq \varepsilon.
		\end{equation*}
		Repeating the argument for $\Psi$ yields that the limit is uniform on $\Psi, \Phi$ in the closed unit ball, from~which the result follows.
	\end{proof}

	We can now prove the main results of this subsection.
	\begin{Proposition} \label{prop:form_equicont_equidiff}
		For $t \in I \subset \mathbb{R}$, let $v_t: \mathcal{H}^+ \times \mathcal{H}^+ \to \mathbb{C}$ be a family of bounded sesquilinear forms.
		Then:
		\begin{enumerate}[label=\textit{(\roman*)},nosep,leftmargin=*,ref=\ref{prop:form_equicont_equidiff}\textit{(\roman*)}]
			\item \label{prop:equicontinuity_forms}
			If, for~every $\Psi, \Phi \in \mathcal{H}^+$, the~map $t \mapsto v_t(\Psi, \Phi)$ is continuous, then:
			\begin{equation*}
				\adjustlimits\lim_{t \to t_0} \sup_{\Psi,\Phi \in \mathcal{H}^+ \setminus \{0\}} \frac{|v_t(\Psi, \Phi) - v_{t_0}(\Psi,\Phi)|}{\|\Phi\|_+ \|\Psi\|_+} = 0.
			\end{equation*}
			If, in~addition, $I$ is compact, there exists $M>0$ such that $|v_t(\Psi,\Phi)| \leq M \|\Psi\|_+ \|\Phi\|_+$ for every $t \in I$ and every $\Psi,\Phi \in \mathcal{H}^+$;
			\item \label{prop:equidifferentiability_forms}
			If, for~every $\Psi, \Phi \in \mathcal{H}^+$, the~map $t \mapsto v_t(\Psi, \Phi)$ is differentiable, then:
			\begin{equation*}
				\adjustlimits\lim_{t \to t_0} \sup_{\Psi,\Phi \in \mathcal{H}^+ \setminus \{0\}} \frac{1}{\|\Phi\|_+ \|\Psi\|_+} \left|\frac{v_t(\Psi, \Phi) - v_{t_0}(\Psi,\Phi)}{t - t_0} - \dot{v}_{t_0}(\Psi,\Phi)\right| = 0,
			\end{equation*}
where $\dot{v}_t(\Psi,\Phi)$ denotes the derivative of $t \mapsto v_t(\Psi,\Phi)$.
			If, in~addition, $I$ is compact and $t \mapsto \dot{v}_t(\Psi,\Phi)$ is continuous for every $\Psi,\Phi$, there is $M > 0$ such that $|\dot{v}_t(\Psi,\Phi)| < M \|\Psi\|_+ \|\Phi\|_+$ for every $t \in I$.
		\end{enumerate}
	\end{Proposition}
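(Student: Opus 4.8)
The plan is to obtain both parts as essentially immediate consequences of Lemmas~\ref{lemma:sup-cont}--\ref{lemma:form_uniform_limit}: those lemmas already carry out the analytic work (the key ingredient being the weak compactness of the unit ball of $\mathcal{H}^+$), so the task here is mostly to check that their hypotheses are met.

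For part~\ref{prop:equicontinuity_forms}, I would fix $t_0 \in I$ and observe that continuity of $t \mapsto v_t(\Psi,\Phi)$ at $t_0$ means precisely that $\lim_{t \to t_0} v_t(\Psi,\Phi)$ exists and equals $v_{t_0}(\Psi,\Phi)$ for every $\Psi, \Phi \in \mathcal{H}^+$. Hence the family $\{v_t\}$ satisfies the hypotheses of Lemma~\ref{lemma:form_uniform_limit} with $L = v_{t_0}$, and the asserted uniform limit is exactly its conclusion. For the uniform bound when $I$ is compact: by Lemma~\ref{lemma:locally_unif_bound}, every $t_0 \in I$ admits a neighbourhood $B_{t_0}$ (relative to $I$) and a constant $K_{t_0}$ with $|v_t(\Psi,\Phi)| \leq K_{t_0}\|\Psi\|_+\|\Phi\|_+$ for all $t \in B_{t_0}$; the $B_{t_0}$ cover $I$, so compactness yields a finite subcover $B_{t_1},\dots,B_{t_n}$ and $M := \max_k K_{t_k}$ does the job.

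For part~\ref{prop:equidifferentiability_forms}, I would fix $t_0 \in I$ and, for $t \in I \setminus \{t_0\}$, introduce the sesquilinear form $w_t := (t - t_0)^{-1}(v_t - v_{t_0})$, which is bounded on $\mathcal{H}^+$ as a scalar multiple of a difference of bounded forms. Differentiability of $t \mapsto v_t(\Psi,\Phi)$ says exactly that $\lim_{t \to t_0} w_t(\Psi,\Phi) = \dot v_{t_0}(\Psi,\Phi)$ exists for every $\Psi,\Phi$. Applying Lemma~\ref{lemma:locally_unif_bound} to $\{w_t\}$ shows that $\dot v_{t_0}$ is a bounded sesquilinear form, and Lemma~\ref{lemma:form_uniform_limit} applied to $\{w_t\}$ with $L = \dot v_{t_0}$ gives precisely the claimed uniform convergence of the difference quotients. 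Since $t_0 \in I$ was arbitrary, $\dot v_t$ is a bounded sesquilinear form for every $t \in I$; therefore, if in addition $t \mapsto \dot v_t(\Psi,\Phi)$ is continuous for every $\Psi,\Phi$ and $I$ is compact, applying part~\ref{prop:equicontinuity_forms} to the family $\{\dot v_t\}_{t \in I}$ produces the constant $M$ with $|\dot v_t(\Psi,\Phi)| \leq M\|\Psi\|_+\|\Phi\|_+$ for all $t \in I$.

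The only points needing care are bookkeeping ones rather than a genuine obstacle: that each difference quotient $w_t$ is genuinely a bounded sesquilinear form on $\mathcal{H}^+$, so that Lemmas~\ref{lemma:locally_unif_bound} and~\ref{lemma:form_uniform_limit} apply verbatim; and that, at an endpoint of $I$, the neighbourhoods and limits above are understood relative to $I$, which is already how the quoted lemmas are phrased. All the substance is contained in Lemma~\ref{lemma:form_uniform_limit}.
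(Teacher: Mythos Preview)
Your proposal is correct and matches the paper's proof essentially line for line: both parts are obtained by invoking Lemma~\ref{lemma:form_uniform_limit} (with $L=v_{t_0}$ for continuity and with the difference-quotient forms $w_t$ for differentiability), and the uniform bounds come from Lemma~\ref{lemma:locally_unif_bound} together with compactness of $I$. You are in fact more explicit than the paper, which compresses the differentiability case to a single clause and derives the bound in \textit{(ii)} directly from Lemma~\ref{lemma:locally_unif_bound} rather than via part~\textit{(i)}, but this is only a packaging difference.
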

	\begin{proof}
		By Lemma \ref{lemma:form_uniform_limit}, the~continuity of $t \mapsto v_t(\Psi,\Phi)$ for each $\Psi, \Phi \in \mathcal{H}^+$ implies:
		\begin{equation*}
			\adjustlimits\lim_{t \to t_0} \sup_{\Psi,\Phi \in \mathcal{H}^+ \setminus \{0\}} \frac{|v_t(\Psi, \Phi) - v_{t_0}(\Psi,\Phi)|}{\|\Phi\|_+ \|\Psi\|_+} = 0,
		\end{equation*}
		and its differentiability implies:
		\begin{equation*}
			\adjustlimits\lim_{t \to t_0} \sup_{\Psi,\Phi \in \mathcal{H}^+ \setminus \{0\}} \frac{1}{\|\Phi\|_+ \|\Psi\|_+} \left|\frac{v_t(\Psi, \Phi) - v_{t_0}(\Psi,\Phi)}{t - t_0} - \dot{v}_{t_0}(\Psi,\Phi)\right| = 0.
		\end{equation*}
		The uniform bound in \emph{(i)} follows from Lemma \ref{lemma:locally_unif_bound} and the compacity of $I$, while the uniform bound in \emph{(ii)} follows applying Lemma \ref{lemma:locally_unif_bound} and the continuity of $t \mapsto \dot{v}_t(\Psi, \Phi)$.
	\end{proof}

	Given a scale of Hilbert spaces $\mathcal{H}^+ \subset \mathcal{H} \subset \mathcal{H}^-$, by~the Riesz representation theorem, any bounded sesquilinear form $v: \mathcal{H}^+ \times \mathcal{H}^+ \to \mathbb{C}$ can be associated with a unique operator $V \in \mathcal{B}(\mathcal{H}^+, \mathcal{H}^-)$ defined by $v(\Psi,\Phi) = (\Psi, V\Phi)$. Let us now transfer the previous results for form-valued functions to the operator-valued functions $t \in I \mapsto V_t \in \mathcal{B}(\mathcal{H}^+, \mathcal{H}^-)$.
 For convenience, we denote by $\|\cdot\|_{\pm,\mp}$ the operator norm in $\mathcal{B}(\mathcal{H}^\pm, \mathcal{H}^\mp)$.

	\begin{Proposition} \label{prop:op_norm_differentiability}
		Let $\mathcal{H}^+ \subset \mathcal{H} \subset \mathcal{H}^-$ be a scale of Hilbert spaces and~$I\subset\mathbb{R}$ compact. For~$t \in I$, let $v_t: \mathcal{H}^+ \times \mathcal{H}^+ \to \mathbb{C}$ be a family of Hermitian, bounded, sesquilinear forms and~$V(t): \mathcal{H}^+ \to \mathcal{H}^-$ be defined as the unique operators such that $(\Psi, V(t)\Phi) = v_t(\Psi, \Phi)$ for all $\Psi,\Phi\in \mathcal{H}^+$.
		Then:
		\begin{enumerate}[label=\textit{(\roman*)},nosep,leftmargin=*,ref=\ref{prop:op_norm_differentiability}\textit{(\roman*)}]
			\item \label{prop:form-norm}
			For every $t \in I$, the~operator norm of $V(t) \in \mathcal{B}(\mathcal{H}^+,\mathcal{H}^-)$ satisfies:
 \begin{equation*}
				\|V(t)\|_{+,-} = \sup_{\Psi, \Phi \in \mathcal{H}^+ \setminus \{0\}} \frac{|v_t(\Psi, \Phi)|}{\|\Psi\|_+ \|\Phi\|_+}.
			\end{equation*}
		\end{enumerate}
		If, in~addition, the~map $t \mapsto v_t(\Psi, \Phi)$ is continuously differentiable for every $\Psi, \Phi \in \mathcal{H}^+$, then:
		\begin{enumerate}[resume,label=\textit{(\roman*)},nosep,leftmargin=*,ref=\ref{prop:op_norm_differentiability}\textit{(\roman*);}]
			\item \label{prop:form-norm-continuity}
			The map $t \mapsto V(t)$ is continuous with respect to the $\|\cdot\|_{+,-}$ norm, and the family $\{V(t)\}_{t \in I}$ is uniformly bounded;
			% That is, $ \sup_{t \in I} \|V(t)\|_{+,-} < M$.
			\item \label{prop:form-differentiability}
			The derivative $\frac{\d}{\d t} V(t)$ exists in the $\|\cdot\|_{+,-}$-norm sense, and it is uniformly bounded, that is there is a constant $K$ such that for every $t \in I$,
			\begin{equation*}
				\left\|\frac{\d}{\d t} V(t)\right\|_{+,-} \leq K.
			\end{equation*}
		\end{enumerate}
	\end{Proposition}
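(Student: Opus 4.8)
The plan is to reduce all three items to the form-level estimates of Proposition~\ref{prop:form_equicont_equidiff} via the isometric duality built into a scale of Hilbert spaces. The key ingredient for \ref{prop:form-norm} is the identity, valid for $\chi\in\mathcal{H}^-$,
\[
\|\chi\|_- = \sup_{\Phi\in\mathcal{H}^+\setminus\{0\}}\frac{|(\chi,\Phi)|}{\|\Phi\|_+}.
\]
This follows from the construction in Section~\ref{sec:hilbert_scales}: writing $J\colon\mathcal{H}^-\to\mathcal{H}^+$ for the isometric bijection extending $\hatjay$, one has $(\chi,\Phi)=\langle J\chi,\Phi\rangle_+$, so the identity is Cauchy--Schwarz in $\mathcal{H}^+$ together with the choice $\Phi=J\chi$ in the supremum. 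Applying this with $\chi=V(t)\Phi$ and using $(V(t)\Phi,\Psi)=\overline{(\Psi,V(t)\Phi)}=\overline{v_t(\Psi,\Phi)}$, one gets
\[
\|V(t)\|_{+,-}=\sup_{\Phi\neq 0}\frac{\|V(t)\Phi\|_-}{\|\Phi\|_+}=\sup_{\Psi,\Phi\neq 0}\frac{|v_t(\Psi,\Phi)|}{\|\Psi\|_+\|\Phi\|_+},
\]
which is \ref{prop:form-norm}.

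For \ref{prop:form-norm-continuity}, I would apply the identity just proved to the bounded Hermitian form $v_t-v_{t_0}$, so that $\|V(t)-V(t_0)\|_{+,-}$ equals the supremum appearing in Proposition~\ref{prop:equicontinuity_forms}; since $t\mapsto v_t(\Psi,\Phi)$ is continuous (being $C^1$), that supremum tends to $0$ as $t\to t_0$, giving continuity in the $\|\cdot\|_{+,-}$ norm. The uniform bound is then immediate from the second assertion of Proposition~\ref{prop:equicontinuity_forms}: compactness of $I$ provides $M>0$ with $|v_t(\Psi,\Phi)|\le M\|\Psi\|_+\|\Phi\|_+$, hence $\|V(t)\|_{+,-}\le M$ for all $t\in I$.

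For \ref{prop:form-differentiability}, the candidate derivative at $t_0$ is the operator $W(t_0)\in\mathcal{B}(\mathcal{H}^+,\mathcal{H}^-)$ representing the sesquilinear form $\dot v_{t_0}$; this form is bounded because the difference quotients $w_t(\Psi,\Phi)=\tfrac{1}{t-t_0}\bigl(v_t(\Psi,\Phi)-v_{t_0}(\Psi,\Phi)\bigr)$ converge pointwise to $\dot v_{t_0}$, so Lemma~\ref{lemma:locally_unif_bound} applies and the Riesz correspondence underlying \ref{prop:form-norm} produces $W(t_0)$. Applying \ref{prop:form-norm} to the form $w_t-\dot v_{t_0}$ and invoking Proposition~\ref{prop:equidifferentiability_forms} yields
\[
\Bigl\|\tfrac{1}{t-t_0}\bigl(V(t)-V(t_0)\bigr)-W(t_0)\Bigr\|_{+,-}\longrightarrow 0\quad\text{as }t\to t_0,
\]
so $\tfrac{\d}{\d t}V(t)$ exists in the $\|\cdot\|_{+,-}$ sense and is the operator representing $\dot v_t$. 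Its uniform bound follows once more from \ref{prop:form-norm}, which gives $\|\tfrac{\d}{\d t}V(t)\|_{+,-}=\sup_{\Psi,\Phi\neq0}|\dot v_t(\Psi,\Phi)|/(\|\Psi\|_+\|\Phi\|_+)$, together with the last assertion of Proposition~\ref{prop:equidifferentiability_forms}, valid since $I$ is compact and $t\mapsto\dot v_t(\Psi,\Phi)$ is continuous by the $C^1$ hypothesis.

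I expect the only genuinely delicate point to be the isometric duality $\|\chi\|_-=\sup_{\Phi\neq0}|(\chi,\Phi)|/\|\Phi\|_+$ that makes \ref{prop:form-norm} work; once it is in place, items \ref{prop:form-norm-continuity} and \ref{prop:form-differentiability} are a direct transcription of the form-valued statements of Proposition~\ref{prop:form_equicont_equidiff} through this isometry, with the difference and difference-quotient forms $v_t-v_{t_0}$ and $w_t$ playing the auxiliary role. A minor bookkeeping matter is checking that these auxiliary forms are again bounded (and Hermitian), which is clear since they are linear combinations of the $v_t$, and that the limit form $\dot v_{t_0}$ is bounded, which is exactly where Lemma~\ref{lemma:locally_unif_bound} enters.
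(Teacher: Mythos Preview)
Your proposal is correct and follows essentially the same route as the paper's proof: the identity in \emph{(i)} is obtained via the isometric duality $J:\mathcal{H}^-\to\mathcal{H}^+$ (the paper writes $\|\xi\|_-=\sup_{\Psi\neq0}|\langle\Psi,\xi\rangle_-|/\|\Psi\|_-$ and then substitutes $J\Psi$, while you go straight to $(\chi,\Phi)=\langle J\chi,\Phi\rangle_+$ and Cauchy--Schwarz---equivalent presentations), and \emph{(ii)}--\emph{(iii)} are then deduced from Proposition~\ref{prop:form_equicont_equidiff} and Lemma~\ref{lemma:locally_unif_bound} exactly as you outline.
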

	\begin{proof}
		By the definition of the operator norm, one has that:
		\begin{equation*}
			\|V(t)\|_{+,-} = \sup_{\Phi \in \mathcal{H}^+ \setminus \{0\}} \frac{\|V(t) \Phi\|_{-}}{\|\Phi\|_{+}}
			= \sup_{\substack{\Phi \in \mathcal{H}^+ \setminus \{0\} \\ \Psi \in \mathcal{H}^- \setminus \{0\}}} \frac{|\langle \Psi, V(t) \Phi \rangle_-|}{\|\Psi\|_-\|\Phi\|_{+}},
		\end{equation*}
		where we used the equality $\|\xi\|_- = \sup_{\Psi \neq 0} \frac{|\langle \Psi, \xi \rangle_-|}{\|\Psi\|_-}$, which holds for any $\xi \in \mathcal{H}^-$.

		Using the isomorphism $J: \mathcal{H}^- \to \mathcal{H}^+$ and the pairing $(\cdot, \cdot)$ associated with the scale of Hilbert spaces $\mathcal{H}^+ \subset \mathcal{H} \subset \mathcal{H}^-$ (cf.\ Subsection~\ref{sec:hilbert_scales}), the~previous equation can be written as:
		\begin{equation*}
			\|V(t)\|_{+,-} = \sup_{\substack{\Phi \in \mathcal{H}^+ \setminus \{0\} \\ \Psi \in \mathcal{H}^- \setminus \{0\}}} \frac{|(J\Psi, V(t)\Phi)|}{\|\Psi\|_-\|\Phi\|_+}
			= \sup_{\Psi, \Phi \in \mathcal{H}^+ \setminus \{0\}} \frac{|(\Psi, V(t)\Phi)|}{\|\Psi\|_+ \|\Phi\|_+},
		\end{equation*}
		where we used the fact that $J$ is an isometric bijection from $\mathcal{H}^-$ to $\mathcal{H}^+$.
		By the definition of $V(t)$, we therefore have:
		\begin{equation*}
			\|V(t)\|_{+,-} = \sup_{\Psi, \Phi \in \mathcal{H}^+ \setminus \{0\}} \frac{|v_t(\Psi, \Phi)|}{\|\Psi\|_+ \|\Phi\|_+},
		\end{equation*}
		which proves \emph{(i)}. Property \emph{(ii)} follows immediately from Proposition \ref{prop:form_equicont_equidiff} and \emph{(i)}.

		To prove \emph{(iii)}, note that, by~Lemma \ref{lemma:locally_unif_bound}, $\dot{v}_t(\Psi, \Phi) \coloneqq \frac{\d}{\d t} v_t(\Psi, \Phi)$ defines a bounded sesquilinear form in $\mathcal{H}^+$.
		Let $\dot{V}(t)$ be the unique operator in $\mathcal{B}(\mathcal{H}^+, \mathcal{H}^-)$ such that $(\Psi, \dot{V}(t)\Phi) = \dot{v}_t(\Psi,\Phi)$ for every $\Psi,\Phi \in \mathcal{H}^+$.
		We claim that $\dot{V}(t) = \frac{\d}{\d t} V(t)$ in the norm sense of $\mathcal{B}(\mathcal{H}^+,\mathcal{H}^-)$, that is,
		\begin{equation*}
			\lim_{t \to t_0} \left\| \frac{V(t) - V(t_0)}{t - t_0} - \dot{V}(t_0) \right\|_{+,-} = 0.
		\end{equation*}
		By \emph{(i)}, this is equivalent to:
		\begin{equation*}
			\lim_{t \to t_0} \sup_{\substack{\Psi, \Phi \in \mathcal{H}^+ \\ \|\Psi\|_+ = 1 = \|\Phi\|_+}}
			\left|\frac{v_t(\Psi, \Phi) - v_{t_0}(\Psi, \Phi)}{t - t_0} - \dot{v}_{t_0}(\Psi, \Phi)\right| = 0,
		\end{equation*}
		which holds by Proposition \ref{prop:equidifferentiability_forms}, as~does the uniform bound.
	\end{proof}

	\section{Existence of~Dynamics} \label{sec:existence_dynamics}
	\subsection{Quantum Dynamics and Sesquilinear~Forms} \label{subsec:quantum_dynamics}
 We begin this section with some~definitions.

	\begin{Definition}\label{def:strong_Schrodinger}
		Let $I\subset \mathbb{R}$ be a compact interval, and let $H(t)$, $t\in I$, be a time-dependent Hamiltonian.
		The {(strong) Schrödinger equation} is the linear evolution equation:
		\begin{equation*}
			\frac{\d}{\d t}\Psi(t) = -iH(t)\Psi(t).
		\end{equation*}
	\end{Definition}
	Despite this equation being a linear ordinary differential equation, the~existence and uniqueness of solutions is not guaranteed in the most general case of a time-dependent Hamiltonian with a time-varying domain. Notice that it is implicitly imposing to the solutions that $\Psi(t)\in\dom H(t)$ for all values of $t$. This condition makes it hard to construct explicit sequences that approximate~them.

	As stated in the Introduction, we consider the case of a time-dependent Hamiltonian with a constant form domain.
	\begin{Definition} \label{def:hamiltonian_const_form}
		Let $I \subset \mathbb{R}$ be a compact interval and $\mathcal{H}^+$ a dense subspace of $\mathcal{H}$, and~let $H(t)$, ${t \in I}$, be a family of self-adjoint operators on $\mathcal{H}$ such that, for~any $t\in I$, the~operator $H(t)$ is densely defined on $\dom H(t)$.
		We say that $H(t)$, ${t \in I}$, is a {time-dependent Hamiltonian with constant form domain} $\mathcal{H}^+$ if:
		\begin{enumerate}[label=\textit{(\roman*)},nosep,leftmargin=*]
			\item There is $m >0$ such that $\langle \Phi, H(t)\Phi \rangle \geq -m \|\Phi\|^2$, for~every $\Phi\in\dom H(t)$ and $t\in I$;
			\item For any $t \in I$, the~domain of the Hermitian sesquilinear form $h_t$ associated with $H(t)$ (cf.\ Theorem~\ref{thm:repKato}) is $\mathcal{H}^+$.
		\end{enumerate}
	\end{Definition}
	For the case in which the closed sesquilinear forms associated with $H(t)$ via Theorem~\ref{thm:repKato} have a constant domain, one can define a weak version of the Schrödinger equation.
	\begin{Definition}\label{def:weak_Schrodinger}
		Let $I\subset \mathbb{R}$ be a compact interval, and let $H(t)$, $t\in I$, be a time-dependent Hamiltonian with constant form domain $\mathcal{H}^+$.
		For $t\in I$, let $h_t$ be the sesquilinear form uniquely associated with the self-adjoint operator $H(t)$.
		The {weak Schrödinger equation} is the linear evolution equation:
		\begin{equation*}
			\frac{\d}{\ dt} \langle \Phi, \Psi(t)\rangle = -ih_t(\Phi,\Psi(t)),
 \quad \text{for every } \Phi\in \mathcal{H}^+.
		\end{equation*}
	\end{Definition}
	The strong Schrödinger equation is the usual Schrödinger equation. We use the adjective \emph{strong} when we want to emphasise the difference with the weak Schrödinger equation.
	It is immediate to prove that solutions of the strong Schrödinger equation are also solutions of the weak Schrödinger equation when the latter is~defined.

	The solutions of the initial-value problems for the weak and strong Schrödinger equations are given in terms of what are known as unitary propagators.
	\begin{Definition}\label{def:unitarypropagator}
		A {unitary propagator} is a two-parameter family of unitary operators $U(t,s)$, $s,t\in \mathbb{R}$, satisfying:
		\begin{enumerate}[label=\textit{(\roman*)},nosep,leftmargin=*]
			\item $U(t,s)U(s,r)= U( t , r)$;
			\item $U(t,t)= \mathbb{I}$;
			\item $U(t,s)$ is jointly strongly continuous in $t$ and $s$.
		\end{enumerate}
	\end{Definition}
	\begin{Definition}
  We say that the strong (weak) Schrödinger equation is {solvable} if there exists a unitary propagator $U(t,s)$, $t,s\in\mathbb{R}$, such that, for~any $\Psi \in \dom H(s)$ ($\Psi\in \mathcal{H}^+$), the~equation with initial value $\Psi$ at time $s\in\mathbb{R}$ admits the curve $\Psi(t) := U(t,s)\Psi$ as its unique solution.
	\end{Definition}

Following the discussion in Section~\ref{sec:hilbert_scales}, given a time-dependent Hamiltonian ${H(t)}$, ${t \in I}$, with~constant form domain $\mathcal{H}^+$, for~each fixed time $t$, we can associate $H(t)$ with a scale of Hilbert spaces:
	\begin{equation*}
	(\mathcal{H}^+, \langle \cdot, \cdot \rangle_{+,t}) \subset
	(\mathcal{H}, \langle \cdot, \cdot \rangle) \subset
	(\mathcal{H}^-_t, \langle \cdot, \cdot \rangle_{-,t}),
\end{equation*}
where $\langle \Psi, \Phi \rangle_{\pm,t} := \langle (H(t) + m + 1)^{\pm\sfrac{1}{2}}\Psi, (H(t) + m + 1)^{\pm\sfrac{1}{2}} \Phi \rangle$, and $\mathcal{H}^-_t$ denotes the closure of $\mathcal{H}$ with respect to the norm defined by $\|\Phi\|_{-,t}^2 \coloneqq \langle \Phi, \Phi \rangle_{-,t}$.
We denote $\mathcal{H}^+_t = (\mathcal{H}^+, \langle \cdot, \cdot \rangle_{+,t})$, and with~a slight abuse of notation, we use the symbol $\mathcal{H}^-_t$ to represent the Hilbert space $(\mathcal{H}^-_t, \langle \cdot, \cdot \rangle_{-,t})$.
We denote by $(\cdot,\cdot)_t : \mathcal{H}^+_t \times \mathcal{H}^-_t \cup \mathcal{H}^-_t \times \mathcal{H}^+_t \to \mathbb{C}$ the canonical pairings.
The unbounded operators $H(t)$ with a constant form domain can be continuously extended to bounded operators $\tilde{H}(t): \mathcal{H}^+_t \to \mathcal{H}^-_t$.
When they are defined, the~inverse operators, $H(t)^{-1} \in \mathcal{B}(\mathcal{H})$, can be extended to bounded operators from $\mathcal{H}^-_t$ to $\mathcal{H}^+_t$, which coincide with $\tilde{H}(t)^{-1}$.
As already done in Section~\ref{sec:hilbert_scales}, in~order to simplify the notation, we drop the tilde, denoting the extensions and the original operators with the same symbols.
We also denote by $\|\cdot\|_{\pm,\mp,t}$ the norm in $\mathcal{B}(\mathcal{H}^\pm_t, \mathcal{H}^\mp_t)$.

Scales of Hilbert spaces are the key objects needed to prove the main results in both J.~Kisyński's and B.~Simon's seminal works, which we shall discuss in the next~subsections.

	\subsection{B.~Simon's~Approach} \label{subsec:simon}
	Let us now introduce the main ideas used by B.~Simon in~\cite{Simon1971}.
	The following assumptions for proving the existence of dynamics were made.
	\begin{Assumption} \label{assump:simon}
		Let $H_0$ be a positive operator on a Hilbert space, and let $\mathcal{H}^+ \subset \mathcal{H} \subset \mathcal{H}^-$ be its associated scale of Hilbert spaces.
		For $0 \leq t \leq T$, let $H(t)$ be a family of bounded operators from $\mathcal{H}^+$ to $\mathcal{H}^-$ such that $(\Psi,H(t)\Phi) = (H(t)\Psi,\Phi)$ for all $\Psi,\Phi\in\mathcal{H}^+$.
		We assume that there is $C>0$ independent of $t$ such~that:
		\begin{enumerate}[label=\textit{(S\arabic*)},nosep,leftmargin=*]
			\item \label{assump:simon_normequiv}
			$C^{-1} (H_0 + 1) \leq H(t) \leq C(H_0 + 1)$;
			\item \label{assump:simon_regularity}
			$B(t) = \frac{\d}{\d t} H(t)^{-1}$ exists in the sense of the norm of $\mathcal{H}$ and~satisfying:
			\begin{equation*}
				\|H(t)^{\sfrac{1}{2}} B(t) H(t)^{\sfrac{1}{2}}\| \leq C.
			\end{equation*}
		\end{enumerate}
	\end{Assumption}

	The approach by B.~Simon defines the Hamiltonian of the system through a family of operators from $\mathcal{H}^+$ to $\mathcal{H}^-$.
	This simplifies some usual problems for the existence of quantum dynamics.
 On the one hand, a~self-adjoint extension of $H(t)$, as~an unbounded operator on $\mathcal{H}$, is fixed (see \cite{Simon1971}, Lemma II.6).
 On the other hand, it simplifies the problem raised by the time dependence of $\dom H(t)$.
 Having a common domain as operators from $\mathcal{H}^+$ to $\mathcal{H}^-$ sidesteps some of the technical difficulties that appear in this~case. 

	Under these hypothesis, B.~Simon proved the existence of dynamics in \cite{Simon1971}, Appendix~II.7.
	\begin{Theorem}[\cite{Simon1971}, Thm. II.27] \label{thm:simon}
		Let $H(t)$ satisfy Assumption \ref{assump:simon}.
		Then, for~any $\Phi_0 \in \mathcal{H}^+$, there is a unique function $\Phi(t) \in \mathcal{H}^+$ such~that:
		\begin{enumerate}[label=\textit{(\roman*)},nosep,leftmargin=*]
			\item $\Phi(t)$ is continuous in the $\mathcal{H}^+$ weak topology, i.e.,\ for all $\Psi \in \mathcal{H}^-$, $t \mapsto \langle \Psi, \Phi(t) \rangle$ is continuous;
			\item For any $\Psi \in \mathcal{H}^+$, $\Phi(t)$ solves the weak Schrödinger equation (cf.\ Definition~\ref{def:weak_Schrodinger}) with initial condition $\Phi_0$:
			\begin{equation*}
				\frac{\d}{\d t}\langle \Psi, \Phi(t) \rangle = -i ( \Psi, H(t)\Phi(t) ), \qquad
				\Phi(0) = \Phi_0.
			\end{equation*}
		\end{enumerate}
		Moreover:
		\begin{enumerate}[resume,label=\textit{(\roman*)},nosep,leftmargin=*]
			\item $\displaystyle \lim_{t \to t_0} \left\| \frac{\Phi(t) - \Phi(t_0)}{t - t_0} + i H(t_0)\Phi(t_0) \right\|_- = 0$;
			\item $\|\Phi(t)\| = \|\Phi_0\|$;
			\item $\Phi(t)$ is $\|\cdot\|$-continuous.
		\end{enumerate}
		Thus, the~map $U(t,s): \Phi(s)\in\mathcal{H}^+ \mapsto \Phi(t)\in\mathcal{H}^+$ is unitary on $\mathcal{H}$ and can be extended to a unitary propagator.
	\end{Theorem}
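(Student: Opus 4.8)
The plan is to construct the propagator $U(t,s)$ via a Yosida-type approximation scheme, following the strategy that Simon adapted from Kisyński and Yosida. First I would introduce, for each $n \in \mathbb{N}$, the regularised Hamiltonians $H_n(t) := H(t)(\mathbb{I} + n^{-1}H(t))^{-1}$, which by Assumption \ref{assump:simon}\ref{assump:simon_normequiv} are bounded \emph{self-adjoint} operators on $\mathcal{H}$ itself, uniformly bounded in $n$ on compact time intervals once one works in the scale. Because each $H_n(t)$ is bounded on $\mathcal{H}$ and — thanks to \ref{assump:simon_regularity} together with Proposition \ref{prop:op_norm_differentiability} — depends norm-continuously (indeed $C^1$) on $t$, the approximate evolution equation $\frac{\d}{\d t}\Phi_n(t) = -\i H_n(t)\Phi_n(t)$ is a genuine linear ODE in the Banach space $\mathcal{H}$, so it has a unique solution $\Phi_n(t) = U_n(t,s)\Phi_0$ with each $U_n(t,s)$ unitary (self-adjointness of $H_n(t)$ gives $\frac{\d}{\d t}\|\Phi_n(t)\|^2 = 0$), and $U_n$ is a unitary propagator in the sense of Definition \ref{def:unitarypropagator}.

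The core of the argument is then to show that $U_n(t,s)$ converges strongly, as $n \to \infty$, to a unitary propagator $U(t,s)$. Here is where the scale of Hilbert spaces is indispensable: I would estimate $\frac{\d}{\d t}\|\,(H(s)+1)^{\sfrac{1}{2}}(U_n(t,s) - U_m(t,s))\Phi_0\,\|^2$, or more precisely work with the quantities $\Phi_n(t)$ measured in the $\|\cdot\|_+$ norm and control the cross terms $\langle \Phi_n(t), (H_n(t) - H_m(t))\Phi_m(t)\rangle$ using \ref{assump:simon_normequiv} to pass between $\mathcal{H}^\pm$ and $\mathcal{H}$, and using \ref{assump:simon_regularity} — rewritten as a bound on $\frac{\d}{\d t}H_n(t)^{-1}$ sandwiched by $H_n(t)^{\sfrac12}$ — to absorb the time derivatives that arise when one differentiates. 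A Grönwall argument, together with the uniform bound $\sup_n\sup_t\|H_n(t)\|_{+,-} < \infty$ from Proposition \ref{prop:op_norm_differentiability}\ref{prop:form-norm-continuity}, should yield that $\{\Phi_n(t)\}$ is Cauchy in $\mathcal{H}^-$ uniformly in $t$, with limit $\Phi(t)$; since the $\Phi_n(t)$ are uniformly bounded in $\mathcal{H}^+$ (their $\|\cdot\|_+$-norm is non-increasing up to the $C$ from \ref{assump:simon_normequiv}), a weak-compactness argument in $\mathcal{H}^+$ upgrades this to $\Phi(t) \in \mathcal{H}^+$ with $\sup_t\|\Phi(t)\|_+ < \infty$, giving (i). Passing to the limit in the weak (integrated) form $\langle\Psi,\Phi_n(t)\rangle = \langle\Psi,\Phi_0\rangle - \i\int_0^t (\Psi, H_n(r)\Phi_n(r))\,\d r$ for $\Psi \in \mathcal{H}^+$ — legitimate because $H_n(r)\Phi_n(r) \to H(r)\Phi(r)$ weakly in $\mathcal{H}^-$ uniformly on $I$ — establishes (ii), and hence (iii) after a further estimate showing the difference quotient converges in $\|\cdot\|_-$; item (iv), $\|\Phi(t)\| = \|\Phi_0\|$, is inherited from the unitarity of each $U_n$ together with strong convergence, and (v) follows from (iii) since $\|\cdot\| \leq \|\cdot\|_-$.

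Finally, uniqueness: if $\Phi(t)$ and $\tilde\Phi(t)$ are two solutions in the sense of (i)--(ii) with the same initial datum, their difference $\Xi(t)$ satisfies the weak equation with $\Xi(0)=0$; pairing with $U_n(0,t)\Psi$-type test vectors (or, more cleanly, using the adjoint approximate propagators as test functions and letting $n\to\infty$) forces $\langle\Psi,\Xi(t)\rangle = 0$ for all $\Psi$ in a dense set, hence $\Xi \equiv 0$. This also shows $U(t,s)$ is well-defined independently of the approximation, and the propagator identities $U(t,s)U(s,r) = U(t,r)$, $U(t,t)=\mathbb{I}$, and joint strong continuity descend from the corresponding properties of $U_n$ via the uniform-in-$t$ strong convergence; since each $U_n(t,s)$ is unitary on $\mathcal{H}$ and $\mathcal{H}^+$ is dense, $U(t,s)$ extends to a unitary operator on $\mathcal{H}$, completing the proof.

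I expect the main obstacle to be the Cauchy estimate for $\{\Phi_n(t)\}$ in the $\mathcal{H}^-$ norm: the naive computation of $\frac{\d}{\d t}\|\Phi_n(t) - \Phi_m(t)\|_-^2$ produces terms in which the time-dependent metric $\langle\cdot,\cdot\rangle_{-,t}$ (or equivalently the operator $H(t)^{-1}$ defining it) is itself being differentiated, and it is precisely the sandwich bound $\|H(t)^{\sfrac12}B(t)H(t)^{\sfrac12}\| \leq C$ in \ref{assump:simon_regularity} that is tailored to control exactly these terms — getting the bookkeeping of which norm lives on which space right, and verifying that every term is indeed dominated after applying \ref{assump:simon_normequiv}, is the delicate part.
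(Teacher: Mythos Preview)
Your proposal follows essentially the same Yosida-approximation strategy that the paper attributes to Simon: introduce $H_n(t)=H(t)(1+n^{-1}H(t))^{-1}$, build approximate propagators $U_n(t,s)$, set $\Phi_n(t)=U_n(t,s)\Phi_0$, and pass to the limit. Two small points of divergence from the paper's sketch are worth noting. First, the paper stresses that $H_n(t)$ is to be regarded as a bounded operator on $\mathcal{H}^-$ (not merely on $\mathcal{H}$) and that the $U_n$ are obtained by a Dyson expansion; your ODE-in-$\mathcal{H}$ argument is equivalent but the $\mathcal{H}^-$ viewpoint is what makes the later estimates line up. Second, the paper's description of the convergence step is extraction of a convergent \emph{subsequence} from $\{\Phi_n(t)\}\subset\mathcal{H}^+$ (weak compactness), whereas you front-load a direct Cauchy estimate in $\mathcal{H}^-$; both routes work, but the subsequence route is closer to Simon's original.

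One genuine slip: you write that (v) follows from (iii) ``since $\|\cdot\|\le\|\cdot\|_-$'', but the inequality goes the other way ($\|\cdot\|_-\le\|\cdot\|$), so $\|\cdot\|_-$-differentiability does not by itself give $\|\cdot\|$-continuity. The correct argument is that weak continuity in $\mathcal{H}$ (from (i)) together with norm conservation (iv) yields strong $\|\cdot\|$-continuity. Similarly, ``$\|\Phi_n(t)\|_+$ non-increasing'' is too optimistic: what one actually controls is $\frac{\d}{\d t}\|\Phi_n(t)\|_{+,t}^2$ via \ref{assump:simon_regularity}, and uniform boundedness in $\mathcal{H}^+$ then comes from Grönwall plus the norm equivalence \ref{assump:simon_normequiv}. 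With these corrections your outline matches the paper's.
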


	The proof of B.~Simon is based on K. Yosida's idea (\cite{Yosida1965}, pp. 425--429), whose original theorem only applies to the case in which $\dom H(t)$ does not depend on $t$.
	The proof is based on the introduction of an approximating family of Hamiltonians $H_n(t) = H(t) (1 + n^{-1}H(t))^{-1}$, which can be seen as a bounded operator in $\mathcal{B}(\mathcal{H}^-)$;
	the existence of unitary propagators $U_n(t,s)$ for $H_n(t)$ is provided by a Dyson expansion.
	Using these propagators, for~any $\Phi_0 \in \mathcal{H}^+$, approximated solutions $\Phi_n(t) = U_n(t,s)\Phi_0$ for the dynamical equation for $H(t)$ are constructed, and~it is shown that Assumption \ref{assump:simon} ensures that $\{\Phi_n(t)\}_{n \in \mathbb{N}} \subset \mathcal{H}^+$ has a convergent subsequence, whose limit satisfies the properties of the~theorem.

	\subsection{J.~Kisyński's~Approach} \label{subsec:kisynski}
	In~\cite{Kisynski1964}, J.~Kisyński studied the existence of solutions for the equation:
\begin{equation} \label{eq:kisynski_problem}
		\frac{\d}{\d t} \Phi(t) = A(t) \Phi(t), \quad \Phi(0) = \Phi_0,
	\end{equation}
	on a Banach space, with~$A(t)$ being an unbounded linear operator defined on a dense domain $\dom A(t)$, which can depend on time.
	He first proved the existence of solutions for the case in which $\dom A(t)$ is constant by using Yosida's approximation; then, he provided sufficient conditions for the existence of solutions in the more general case in which $\dom A(t)$ does depend on $t$. These general results are finally applied to the case in which $A(t) = -i \Lambda(t)$, with~$\Lambda(t)$ being a positive, self-adjoint operator on a Hilbert space, such that $\dom \Lambda(t)^{\sfrac{1}{2}}$ is independent of $t$ (\cite{Kisynski1964}, Secs. 7 and 8).

	In order to prove the existence of a unitary propagator for this case, J.~Kisyński assumed the following.
	\begin{Assumption}[\cite{Kisynski1964}, Hyp. 7.1] \label{assump:kisynski}
		Let $\mathcal{H}$ be a Hilbert space with inner product $\langle \cdot, \cdot \rangle$, and~let $\mathcal{H}^+ \subset \mathcal{H}$ be a dense~subspace:
		\begin{enumerate}[label=\textit{(K\arabic*)},nosep,leftmargin=*]
			\item \label{assump:kisynski_normequiv}
			For every $t \in [0,T]$, $\langle \cdot, \cdot \rangle_{+,t}$ is an inner product on $\mathcal{H}^+$ endowing it with the structure of a Hilbert space $\mathcal{H}^+_t$, which is continuously contained on $\mathcal{H}$;
			\item \label{assump:kisynski_regularity}
			For every $t \in [0,T]$ and every $\Psi,\Phi \in \mathcal{H}^+$, the~function $t \mapsto \langle \Psi, \Phi \rangle_{+,t}$ is in $C^n(0,T)$ for $n \geq 1$.
		\end{enumerate}
	\end{Assumption}

	Under these hypotheses, for~each $t \in [0,T]$, J.~Kisyński constructed the scale of Hilbert spaces $\mathcal{H}^+_t \subset \mathcal{H} \subset \mathcal{H}^-_t$ associated with $\langle \Psi, \Phi \rangle_{+,t}$ (see \cite{Kisynski1964}, Lemmas 7.2--7.6).
	Then, the~following operators are defined (cf.\ \cite{Kisynski1964}, Lemmas 7.7--7.10).
	\begin{Proposition} \label{prop:kisynski_operators}
		Under Assumption \ref{assump:kisynski}, for~every $t \in [0,T]$, the~following statements~hold:
		\begin{enumerate}[label=\textit{(\roman*)},nosep,leftmargin=*]
			\item Define the domain:
			\begin{equation*}
				\mathcal{D}(\Lambda_0) = \left\{\Phi \in \mathcal{H}^+ \mid \sup\{|\langle \Psi, \Phi \rangle_{+,t}| \mid \Psi \in \mathcal{H}^+, \|\Psi\|=1\} <\infty \right\}.
			\end{equation*}
			Then, the equality:
			\begin{equation*}
				\langle \Psi, \Lambda_0(t)\Phi \rangle = \langle \Psi, \Phi \rangle_{+,t}, \qquad
				\Phi \in \mathcal{D}(\Lambda_0), \quad \Psi \in \mathcal{H}^+,
			\end{equation*}
			defines a self-adjoint, positive unbounded operator on $\mathcal{H}$;
			\item Let $\Lambda(t)$ be the closure of $\Lambda_0(t)$ on $\mathcal{H}^-_t$; then, $\Lambda(t)$ is a self-adjoint, positive operator on $\mathcal{H}^-_t$ with $\dom \Lambda(t) = \mathcal{H}^+$.\
			Moreover, $\Lambda(t)^{-1}$ is the canonical isomorphism $J: \mathcal{H}^-_t \to \mathcal{H}^+_t$ (cf.\ Section~\ref{sec:hilbert_scales}) for the scale of Hilbert spaces $\mathcal{H}^+_t \subset \mathcal{H} \subset \mathcal{H}^-_t$;
			\item Let $\mathcal{D}(\Lambda_1) = \{\Phi \in \mathcal{D}(\Lambda_0) \mid \Lambda_0\Phi \in \mathcal{H}^+\}$ and $\Lambda_1\Phi = \Lambda_0\Phi$ for $\Phi \in \mathcal{D}(\Lambda_1)$.
			Then, $\Lambda_1$ is a self-adjoint, positive unbounded operator on $\mathcal{H}^+_t$.
		\end{enumerate}
	\end{Proposition}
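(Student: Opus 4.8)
The plan is to recognise $\Lambda_0(t)$, $\Lambda(t)$ and $\Lambda_1(t)$ as the three realisations — on $\mathcal{H}$, on $\mathcal{H}^-_t$ and on $\mathcal{H}^+_t$ respectively — of one and the same positive self-adjoint operator attached to the inner product $\langle\cdot,\cdot\rangle_{+,t}$, so that the proposition reduces to the Representation Theorem~\ref{thm:repKato} together with the scale construction recalled in Subsection~\ref{sec:hilbert_scales}. Since the statement is pointwise in $t$, only the first hypothesis of Assumption~\ref{assump:kisynski} (continuous inclusion of $\mathcal{H}^+_t$ in $\mathcal{H}$) is used; I fix $t$ and abbreviate $h_t(\Psi,\Phi):=\langle\Psi,\Phi\rangle_{+,t}$.

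\emph{Step 1 (the operator $\Lambda_0(t)$; part (i)).} First I would check that $h_t$ is a Hermitian, strictly positive, closed sesquilinear form on the dense domain $\mathcal{H}^+$. Strict positivity is immediate from the continuous inclusion: there is $c_t>0$ with $\norm{\Phi}\le c_t\norm{\Phi}_{+,t}$, whence $h_t(\Phi,\Phi)=\norm{\Phi}_{+,t}^2\ge c_t^{-2}\norm{\Phi}^2$. Closedness holds because the graph norm of Definition~\ref{def:graph-norm} is equivalent to $\norm{\cdot}_{+,t}$ and $(\mathcal{H}^+,\norm{\cdot}_{+,t})$ is complete by hypothesis. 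Theorem~\ref{thm:repKato} then yields a unique positive self-adjoint operator representing $h_t$; its domain is, by that theorem, the set of $\Phi\in\mathcal{H}^+$ for which $\Psi\mapsto h_t(\Psi,\Phi)$ is bounded in the $\norm{\cdot}$-norm on $\mathcal{H}^+$, and — by density of $\mathcal{H}^+$ in $\mathcal{H}$ and the Riesz representation of bounded anti-linear functionals — this is exactly $\mathcal{D}(\Lambda_0)$, with representing operator $\Lambda_0(t)$ satisfying $\langle\Psi,\Lambda_0(t)\Phi\rangle=h_t(\Psi,\Phi)$. This is (i); the operator is unbounded unless $\mathcal{H}^+=\mathcal{H}$ with equivalent norms, in which case $\mathcal{D}(\Lambda_0)=\mathcal{H}$.

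\emph{Step 2 (the scale and $\Lambda(t)$; part (ii)).} I would then run the construction of Subsection~\ref{sec:hilbert_scales} with $h=h_t$ and representing operator $\Lambda_0(t)$. It produces the scale $\mathcal{H}^+_t\subset\mathcal{H}\subset\mathcal{H}^-_t$, the isometric bijection $J\colon\mathcal{H}^-_t\to\mathcal{H}^+_t$ extending $\Lambda_0(t)^{-1}$, and — since $\Lambda_0(t)$ is strictly positive — the unitaries $U_+:=\Lambda_0(t)^{\sfrac{1}{2}}\colon\mathcal{H}^+_t\to\mathcal{H}$ and $U_-:=\Lambda_0(t)^{-\sfrac{1}{2}}\colon\mathcal{H}^-_t\to\mathcal{H}$, these being the identities $\langle\cdot,\cdot\rangle_\pm=\langle\Lambda_0(t)^{\pm\sfrac{1}{2}}\cdot,\Lambda_0(t)^{\pm\sfrac{1}{2}}\cdot\rangle$ of that subsection read off after extension by continuity. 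The closure $\Lambda(t)$ of $\Lambda_0(t)$ inside $\mathcal{H}^-_t$ is then $J^{-1}$, and conjugation by $U_-$ gives $U_-\Lambda(t)U_-^{-1}=\Lambda_0(t)$ (formally $\Lambda_0(t)^{-\sfrac{1}{2}}\Lambda_0(t)\Lambda_0(t)^{\sfrac{1}{2}}=\Lambda_0(t)$); hence $\Lambda(t)$ is positive self-adjoint on $\mathcal{H}^-_t$, its inverse $J$ is the canonical isomorphism of the scale, and $\dom\Lambda(t)=U_-^{-1}(\mathcal{D}(\Lambda_0))=\Lambda_0(t)^{\sfrac{1}{2}}(\mathcal{D}(\Lambda_0))=\mathcal{H}^+$, which proves (ii).

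\emph{Step 3 ($\Lambda_1(t)$; part (iii)) and the expected obstacle.} The same conjugation works on the positive side: the restriction of $\Lambda_0(t)$ to $\mathcal{D}(\Lambda_1)=\{\Phi\in\mathcal{D}(\Lambda_0)\mid\Lambda_0(t)\Phi\in\mathcal{H}^+\}$ satisfies $U_+\Lambda_1 U_+^{-1}=\Lambda_0(t)$, so $\Lambda_1$ is positive self-adjoint on $\mathcal{H}^+_t$ (and unbounded whenever $\Lambda_0(t)$ is). I would keep in reserve a scale-free argument: one checks directly that $\Lambda_1$ is symmetric on $\mathcal{H}^+_t$ with $\langle\Phi,\Lambda_1\Phi\rangle_{+,t}=\norm{\Lambda_0(t)\Phi}^2\ge c_t^{-2}\norm{\Phi}_{+,t}^2$, and surjective onto $\mathcal{H}^+$ because $\Lambda_0(t)^{-1}\eta\in\mathcal{D}(\Lambda_1)$ and $\Lambda_1\Lambda_0(t)^{-1}\eta=\eta$ for every $\eta\in\mathcal{H}^+$; then $\Lambda_1^{-1}=\Lambda_0(t)^{-1}|_{\mathcal{H}^+}$ is an everywhere-defined, bounded, symmetric — hence self-adjoint — injective operator on $\mathcal{H}^+_t$, so $\Lambda_1$ is its self-adjoint inverse, with dense domain since $\Lambda_1^{-1}$ is injective. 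No step is deep; the only delicate point I anticipate is the bookkeeping across the three inner products, in particular the domain identifications $U_-^{-1}(\mathcal{D}(\Lambda_0))=\mathcal{H}^+$ and $U_+^{-1}(\mathcal{D}(\Lambda_0))=\mathcal{D}(\Lambda_1)$, which unwind to the spectral-calculus facts $\Lambda_0(t)^{\sfrac{1}{2}}(\mathcal{D}(\Lambda_0))=\mathcal{H}^+$ and $\Lambda_0(t)^{-\sfrac{1}{2}}(\mathcal{D}(\Lambda_0))=\mathcal{D}(\Lambda_1)$ for the strictly positive self-adjoint operator $\Lambda_0(t)$ — which I would either invoke from the scale construction or, for (iii), extract from the surjectivity argument above.
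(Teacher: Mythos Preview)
The paper does not prove this proposition; it merely states it and cites Kisy\'nski's original Lemmas~7.7--7.10, so there is no ``paper's own proof'' to compare against. Your argument is correct and is precisely the standard route: identify $h_t=\langle\cdot,\cdot\rangle_{+,t}$ as a closed strictly positive form, invoke the Representation Theorem~\ref{thm:repKato} to obtain $\Lambda_0(t)$, then realise $\Lambda(t)$ and $\Lambda_1(t)$ as unitary conjugates of $\Lambda_0(t)$ via the square-root maps $\Lambda_0(t)^{\mp\sfrac{1}{2}}$ furnished by the scale construction of Section~\ref{sec:hilbert_scales}. The domain identifications you flag as the ``delicate point'' --- $\Lambda_0(t)^{\sfrac{1}{2}}(\mathcal{D}(\Lambda_0))=\mathcal{H}^+$ and $\Lambda_0(t)^{-\sfrac{1}{2}}(\mathcal{D}(\Lambda_0))=\mathcal{D}(\Lambda_1)$ --- are exactly the spectral-calculus facts $\dom\Lambda_0^{\sfrac{1}{2}}=\mathcal{H}^+$ and $\dom\Lambda_0^{\sfrac{3}{2}}=\mathcal{D}(\Lambda_1)$, both immediate for a strictly positive self-adjoint operator; your backup surjectivity argument for~(iii) is a clean alternative that avoids even this. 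Nothing is missing.
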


	Finally, in~\cite{Kisynski1964}, Sec. 8, J.~Kisyński showed the following theorem on the existence of unitary propagators.
	Starting his construction from a family of inner products (which are sesquilinear forms), he constructed the associated scales of Hilbert spaces and~then focused on the operators representing these inner products.
	This approach has the same advantages as B.~Simon's: the self-adjointness of the operators is guaranteed by the hermiticity of the inner products and the representation theorem (see Theorem~\ref{thm:repKato}).
	Additionally, it sets the problem on a constant (form) domain $\mathcal{H}^+$.

	\begin{Theorem}[\cite{Kisynski1964}, Thm. 8.1] \label{thm:kisynski_original}
		Under Assumption \ref{assump:kisynski} with $n \geq 1$, there is a unique unitary propagator $U(t,s)$ for the problem \eqref{eq:kisynski_problem} such~that:
		\begin{enumerate}[label=\textit{(\roman*)},nosep,leftmargin=*]
			\item The continuous extension of $U(t,s)$ to $\mathcal{B}(\mathcal{H}^-)$ is strongly continuous for $s,t \in [0,T]$;
			\item $U(t,s) \mathcal{H}^+ = \mathcal{H}^+$ and the restriction $U(t,s) \in \mathcal{B}(\mathcal{H}^+)$ is strongly continuous for $s,t \in [0,T]$;
			\item $t,s \mapsto U(t,s)$ is a strongly continuously differentiable function with values on $\mathcal{B}(\mathcal{H}^+, \mathcal{H}^-)$ for $t,s \in [0,T]$, and~in this sense:
			\begin{equation*}
				\frac{\d}{\d t} U(t,s) = -i\Lambda(t)U(t,s) \quad\text{and}\quad
				\frac{\d}{\d s} U(t,s) = -iU(t,s)\Lambda(s).
			\end{equation*}
		\end{enumerate}
		Moreover, if~Assumption \ref{assump:kisynski} is satisfied with $n \geq 2$, it also~holds that:
		\begin{enumerate}[resume,label=\textit{(\roman*)},nosep,leftmargin=*]
			\item $U(t,s)\mathcal{D}(\Lambda_1(s)) = \mathcal{D}(\Lambda_1(t))$ for $t,s \in [0,T]$ and for $s \in [0,T]$ and $\Phi \in \mathcal{D}(\Lambda_1(s))$ fixed, the~function $t \mapsto U(t,s)\Phi$ is continuously differentiable in $[0,T]$ in the sense of $\mathcal{H}^+$, and there, it satisfies:
			\begin{equation*}
				\frac{\d}{\d t} U(t,s)\Phi = -i \Lambda_1(t)U(t,s)\Phi;
			\end{equation*}
			\item $U(t,s)\mathcal{D}(\Lambda_0(s)) = \mathcal{D}(\Lambda_0(t))$ for $t,s \in [0,T]$ and for $s \in [0,T]$ and $\Phi \in \mathcal{D}(\Lambda_0(s))$ fixed, the~function $t \mapsto U(t,s)\Phi$ is continuously differentiable in $[0,T]$ in the sense of $\mathcal{H}$, and there, it satisfies:
			\begin{equation*}
				\frac{\d}{\d t} U(t,s)\Phi = -i \Lambda_0(t)U(t,s)\Phi.
			\end{equation*}
		\end{enumerate}
	\end{Theorem}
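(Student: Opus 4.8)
The plan is to reproduce J.~Kisyński's Yosida-approximation scheme in the scale-of-Hilbert-spaces language, using the regularity results of Subsection~\ref{sec:op_form_functions} to control the time dependence. First, fix a reference scale at $t=0$. By Assumption~\ref{assump:kisynski} (already with $n\ge 1$) every map $t\mapsto\langle\Psi,\Phi\rangle_{+,t}$ is $C^1$, so Proposition~\ref{prop:form_equicont_equidiff} shows the forms $\langle\cdot,\cdot\rangle_{+,t}$ are bounded on $\mathcal{H}^+_0$ uniformly in $t\in[0,T]$; by Theorem~\ref{thm:eqiv+_equiv-} the norms $\norm{\cdot}_{\pm,t}$ are then all uniformly equivalent, so the scales $\mathcal{H}^+_t\subset\mathcal{H}\subset\mathcal{H}^-_t$ may be identified, as topological vector spaces, with the one at $t=0$. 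Under this identification the operator $\Lambda(t)$ of Proposition~\ref{prop:kisynski_operators} represents $\langle\cdot,\cdot\rangle_{+,t}$, so Proposition~\ref{prop:op_norm_differentiability} yields that $t\mapsto\Lambda(t)\in\mathcal{B}(\mathcal{H}^+,\mathcal{H}^-_0)$ is norm-continuous, uniformly bounded, and differentiable with uniformly bounded derivative; likewise $t\mapsto\Lambda(t)^{-1}=J_t\in\mathcal{B}(\mathcal{H}^-_0,\mathcal{H}^+)$, and when $n\ge 2$ the derivative $\dot\Lambda(\cdot)$ is itself norm-continuous and uniformly bounded together with its derivative.

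Next, introduce the Yosida approximants $\Lambda_n(t):=\Lambda(t)\bigl(1+n^{-1}\Lambda(t)\bigr)^{-1}$: bounded, positive, self-adjoint on $\mathcal{H}$, commuting with $\Lambda(t)$, restricting to bounded operators on every space of the scale, with $\Lambda_n(t)\to\Lambda(t)$ in $\mathcal{B}(\mathcal{H}^+,\mathcal{H}^-_0)$-norm uniformly in $t$ (by the spectral theorem, $\norm{\Lambda(t)-\Lambda_n(t)}$ in this norm being $O(1/n)$), and each $\Lambda_n(\cdot)$ inherits the regularity of $\Lambda(\cdot)$. Since $-\i\Lambda_n(t)$ is a norm-continuous family of bounded skew-adjoint operators, the Dyson series produces a unique unitary propagator $U_n(t,s)$ on $\mathcal{H}$ solving $\frac{\d}{\d t}U_n(t,s)=-\i\Lambda_n(t)U_n(t,s)$, $U_n(s,s)=\mathbb{I}$, which also preserves $\mathcal{H}^+$. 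The key estimate comes from differentiating, for $\Phi\in\mathcal{H}^+$, the quantity $g_n(t):=\norm{U_n(t,s)\Phi}_{+,t}^2=(U_n(t,s)\Phi,\Lambda(t)U_n(t,s)\Phi)_t$: the two cross terms produced by $\frac{\d}{\d t}U_n(t,s)\Phi=-\i\Lambda_n(t)U_n(t,s)\Phi$ cancel, because $(\Lambda_n(t)\xi,\Lambda(t)\xi)_t$ is real (the two operators commuting), leaving $\dot g_n(t)=(U_n(t,s)\Phi,\dot\Lambda(t)U_n(t,s)\Phi)_t$, which is $\le M\,g_n(t)$ with $M$ independent of $n,s,t$ by the uniform bound on $\dot\Lambda$ and the uniform norm-equivalence. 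Grönwall's inequality then gives $\norm{U_n(t,s)\Phi}_{+,t}^2\le e^{M\abs{t-s}}\norm{\Phi}_{+,s}^2$, so $U_n(t,s)\mathcal{H}^+\subset\mathcal{H}^+$ with a bound uniform in $n$ and $s,t$; by unitarity on $\mathcal{H}$ and duality, the extensions $U_n(t,s)\in\mathcal{B}(\mathcal{H}^-_0)$ are uniformly bounded too.

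Then, from $U_n(t,s)-U_m(t,s)=-\i\int_s^t U_n(t,r)\bigl(\Lambda_n(r)-\Lambda_m(r)\bigr)U_m(r,s)\,\d r$ together with these uniform bounds and the uniform $\mathcal{B}(\mathcal{H}^+,\mathcal{H}^-_0)$-convergence $\Lambda_n\to\Lambda$, the family $\{U_n(t,s)\}$ is Cauchy in $\mathcal{B}(\mathcal{H}^+,\mathcal{H}^-_0)$ uniformly in $s,t\in[0,T]$; by density of $\mathcal{H}^+$ and unitarity on $\mathcal{H}$ the limit extends to $U(t,s)\in\mathcal{B}(\mathcal{H})$, and the propagator identities pass to the limit in both orders, so $U(t,s)$ is a unitary propagator, with item (i) expressing the local uniformity of the convergence. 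Weak lower semicontinuity of $\norm{\cdot}_+$ gives $U(t,s)\mathcal{H}^+\subset\mathcal{H}^+$ (and the same for $U(s,t)=U(t,s)^{-1}$), and combining the weak $\mathcal{H}^+$-continuity furnished by the Cauchy estimate with the continuity of $t\mapsto\norm{U(t,s)\Phi}_{+,t}$ (the limiting form of the Grönwall inequality) upgrades it to strong $\mathcal{B}(\mathcal{H}^+)$-continuity, which is (ii). Passing to the limit in $U_n(t,s)\Phi=\Phi-\i\int_s^t\Lambda_n(r)U_n(r,s)\Phi\,\d r$ — the integrand converges in $\mathcal{H}^-_0$, and the limit integrand $r\mapsto\Lambda(r)U(r,s)\Phi$ is $\mathcal{H}^-_0$-continuous, $\Lambda(\cdot)$ being norm-continuous into $\mathcal{B}(\mathcal{H}^+,\mathcal{H}^-_0)$ and $U(\cdot,s)$ strongly $\mathcal{B}(\mathcal{H}^+)$-continuous — gives $\frac{\d}{\d t}U(t,s)\Phi=-\i\Lambda(t)U(t,s)\Phi$ in $\mathcal{H}^-_0$; the equation in $s$ follows similarly, and uniqueness of the propagator is the standard argument, namely that for any second propagator $\widetilde U$ satisfying (i)--(iii) the map $r\mapsto U(t,r)\widetilde U(r,s)\Phi$ has vanishing derivative in $\mathcal{H}^-_0$, so $U(t,s)\Phi=\widetilde U(t,s)\Phi$; this is (iii). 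Finally, when $n\ge 2$ the map $\Lambda(\cdot)$ is continuously differentiable into $\mathcal{B}(\mathcal{H}^+,\mathcal{H}^-_0)$, and a Grönwall estimate one level up — on $t\mapsto\norm{\Lambda(t)U_n(t,s)\Phi}$ for $\Phi\in\mathcal D(\Lambda_1(s))$, respectively on the $\mathcal{H}$-norm for $\Phi\in\mathcal D(\Lambda_0(s))$ — gives $U(t,s)\mathcal D(\Lambda_1(s))=\mathcal D(\Lambda_1(t))$, $U(t,s)\mathcal D(\Lambda_0(s))=\mathcal D(\Lambda_0(t))$ and the corresponding strong equations in $\mathcal{H}^+$ and in $\mathcal{H}$, i.e.\ (iv) and (v).

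The conceptual heart is the Grönwall estimate for $g_n$; the laborious part is the functional-analytic bookkeeping — upgrading the weak-$\mathcal{H}^+$ limits to genuine strong continuity of $U(\cdot,s)$ in $\mathcal{B}(\mathcal{H}^+)$, verifying that the limit solves the equation in the $\mathcal{B}(\mathcal{H}^+,\mathcal{H}^-)$ sense rather than only weakly, and threading the $t$-dependent scales through every estimate, which is exactly what Theorem~\ref{thm:eqiv+_equiv-} and Proposition~\ref{prop:op_norm_differentiability} are there to tame. I expect the $n\ge 2$ refinement to be the other delicate point, requiring one to isolate the correct higher-order energy functional whose uniform boundedness encodes the invariance of $\mathcal D(\Lambda_0)$ and $\mathcal D(\Lambda_1)$.
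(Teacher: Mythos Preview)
The paper does not supply its own proof of this theorem: it is stated as a citation of \cite{Kisynski1964}, Thm.~8.1, and the surrounding text only summarises Kisy\'nski's construction (the operators $\Lambda_0,\Lambda,\Lambda_1$ of Proposition~\ref{prop:kisynski_operators}) before quoting the result. There is therefore no ``paper's own proof'' to compare against; your task was in effect to reconstruct Kisy\'nski's argument.

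That said, your sketch is faithful to the method the paper attributes to Kisy\'nski (and, in parallel, to Simon): Yosida approximants $\Lambda_n(t)=\Lambda(t)(1+n^{-1}\Lambda(t))^{-1}$, a Dyson-series propagator $U_n$, and a Gr\"onwall estimate on the time-dependent energy $g_n(t)=\norm{U_n(t,s)\Phi}_{+,t}^2$ whose cross terms vanish because $\Lambda_n(t)$ and $\Lambda(t)$ commute. The use of Proposition~\ref{prop:op_norm_differentiability} and Theorem~\ref{thm:eqiv+_equiv-} to fix a reference scale and obtain uniform bounds on $\Lambda(\cdot)$ and $\dot\Lambda(\cdot)$ is exactly the r\^ole those results play in the paper's narrative. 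Two points deserve care if you flesh this out: first, the claim that $U_n(t,s)$ preserves $\mathcal{H}^+$ is not automatic from the Dyson series on $\mathcal{H}$ --- you need either to run the Dyson series in $\mathcal{B}(\mathcal{H}^+)$ (which requires $\Lambda_n(\cdot)$ bounded and continuous there) or to derive it a posteriori from the Gr\"onwall bound via a density argument; second, for parts (iv)--(v) the ``one level up'' Gr\"onwall estimate needs a genuinely $C^1$ (not merely differentiable with bounded derivative) map $t\mapsto\Lambda(t)$, which is precisely what $n\ge 2$ in \ref{assump:kisynski_regularity} supplies through Proposition~\ref{prop:op_norm_differentiability}. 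With those caveats, the outline is sound and matches the strategy the paper describes.
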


	\subsection{Relations between the Approaches of B.~Simon and J.~Kisyński} \label{subsec:simon_vs_kisynski}

 We start by examining the similarities between the two approaches.
 Even though the starting point for each of them is apparently different, the~tools they use are analogous.
 The link allowing us to relate both approaches are the scales of Hilbert spaces.
 B.~Simon started its construction with the scale of Hilbert spaces associated with an operator $H_0$, $\mathcal{H}^+ \subset \mathcal{H} \subset \mathcal{H}^-$ and~then considered the family of operators $H(t): \mathcal{H}^+ \to \mathcal{H}^-$.
 On the other hand, J.~Kisyński started from a family of inner products $\langle \cdot, \cdot \rangle_{+,t}$ and built the scales of Hilbert spaces $\mathcal{H}^+_t \subset \mathcal{H} \subset \mathcal{H}^-_t$ associated with the sesquilinear forms $\langle \cdot, \cdot \rangle_{+,t}$ and the family of operators $\Lambda(t): \mathcal{H}^+ \to \mathcal{H}^-$ representing them.
 Comparing Theorem~\ref{thm:simon} and Theorem~\ref{thm:kisynski_original}, it is clear that the role the family of operators $H(t)$ plays in the approach by B.~Simon is the same that the family $\Lambda(t)$ plays in J.~Kisyński's.
 Therefore, the~problem addressed by them is the same if $h_t(\cdot,\cdot) \coloneqq \langle \cdot, \cdot \rangle_{+,t}$ is the sesquilinear form associated with $H(t)$.
 For the rest of this section, we assume that this equality~holds.

	Before further studying the relation between both approaches, let us briefly review the assumptions B.~Simon and J.~Kisyński made to prove the existence of a unitary propagator.
	There are two main ingredients playing a central role in their proofs: the regularity of the generators $H(t)$ and the uniform equivalence of the norms $\|\cdot\|_{+,t} \coloneqq \sqrt{\langle \cdot, \cdot \rangle_{+,t}}$, i.e.,~\hbox{$c^{-1}\norm{\cdot}_{+,t'}\leq \norm{\cdot}_{+,t}\leq c \norm{\cdot}_{+,t'}$}.

	Although both of them use repeatedly the equivalence of the norms, neither B.~Simon nor J.~Kisyński imposed it as an explicit assumption; still, the~equivalence \textit{does} follow from their assumptions.
	Let us first examine how this uniform equivalence of the norms appears in each of the~approaches.

	\begin{Proposition} \label{prop:norm_equiv_simon}
		Let $H_0, H(t)$ be as in Assumption \ref{assump:simon}; for $\Phi \in \mathcal{H}^+$, define the norms:
		\begin{equation*}
			\|\Phi\|_{+,t} \coloneqq \sqrt{\left( \Phi, H(t)\Phi \right)}, \qquad
			\|\Phi\|_0 \coloneqq \sqrt{\left( \Phi, (H_0 + 1)\Phi \right)}.
		\end{equation*}
		Then, \ref{assump:simon_normequiv} holds if and only if the norms $\|\cdot\|_{+,t}$ are equivalent to $\|\cdot\|_0$ uniformly on $t \in [0,T]$.
	\end{Proposition}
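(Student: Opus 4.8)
The plan is to unravel the operator inequality \ref{assump:simon_normequiv} into a pair of scalar inequalities between quadratic forms on $\mathcal{H}^+$, and then to pass between these and the claimed norm equivalence simply by squaring and taking square roots, the constants differing only by a square root.

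First I would record that for $\Phi \in \mathcal{H}^+$ the quantities $(\Phi, H(t)\Phi)$ and $(\Phi, (H_0+1)\Phi)$ are real: the first because $H(t) \colon \mathcal{H}^+ \to \mathcal{H}^-$ satisfies the symmetry $(\Psi, H(t)\Phi) = (H(t)\Psi, \Phi)$ assumed in Assumption~\ref{assump:simon}, the second because $H_0+1$ is positive self-adjoint with form domain $\mathcal{H}^+$, so that $(\Phi,(H_0+1)\Phi) = \|(H_0+1)^{\sfrac{1}{2}}\Phi\|^2 \geq \|\Phi\|^2$; in particular $\|\cdot\|_0$ is genuinely a norm. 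Interpreting \ref{assump:simon_normequiv}, as is standard for operators acting from $\mathcal{H}^+$ to $\mathcal{H}^-$, as an inequality between the associated quadratic forms on the common domain $\mathcal{H}^+$ via the pairing $(\cdot,\cdot)$, it reads: for every $\Phi \in \mathcal{H}^+$ and $t \in [0,T]$,
\[
C^{-1}(\Phi,(H_0+1)\Phi) \;\leq\; (\Phi, H(t)\Phi) \;\leq\; C\,(\Phi,(H_0+1)\Phi),
\]
i.e.\ $C^{-1}\|\Phi\|_0^2 \leq \|\Phi\|_{+,t}^2 \leq C\|\Phi\|_0^2$.

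For the forward implication I would take square roots in the last display; note that the form inequality already gives $(\Phi, H(t)\Phi) \geq C^{-1}(\Phi,(H_0+1)\Phi) \geq C^{-1}\|\Phi\|^2$, so each $\|\cdot\|_{+,t}$ is well-defined and a norm, and one obtains $C^{-\sfrac{1}{2}}\|\Phi\|_0 \leq \|\Phi\|_{+,t} \leq C^{\sfrac{1}{2}}\|\Phi\|_0$ for all $t \in [0,T]$ and $\Phi \in \mathcal{H}^+$; since $C$ is independent of $t$, the norms $\|\cdot\|_{+,t}$ are uniformly equivalent to $\|\cdot\|_0$ with constant $c = \sqrt{C}$. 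For the converse, assume a constant $c > 0$ with $c^{-1}\|\Phi\|_0 \leq \|\Phi\|_{+,t} \leq c\|\Phi\|_0$ for all $t$ and $\Phi$; here the standing hypothesis that each $\|\cdot\|_{+,t}$ be a norm already guarantees $(\Phi, H(t)\Phi) \geq 0$, so squaring the inequalities is legitimate and yields $c^{-2}(\Phi,(H_0+1)\Phi) \leq (\Phi, H(t)\Phi) \leq c^2(\Phi,(H_0+1)\Phi)$ for every $\Phi \in \mathcal{H}^+$, which is exactly \ref{assump:simon_normequiv} with $C = c^2$.

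There is no substantial obstacle here; the only points requiring care are the bookkeeping of what \ref{assump:simon_normequiv} means when $H(t)$ and $H_0+1$ are viewed as bounded maps $\mathcal{H}^+ \to \mathcal{H}^-$ --- namely an inequality of quadratic forms on $\mathcal{H}^+$ via the pairing --- and remembering that the transition between the form inequality and the norm inequality replaces the constant $C$ by $\sqrt{C}$ and conversely.
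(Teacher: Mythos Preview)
Your proof is correct and follows exactly the same approach as the paper's own proof: interpret \ref{assump:simon_normequiv} as the quadratic-form inequality $C^{-1}\|\Phi\|_0^2 \leq \|\Phi\|_{+,t}^2 \leq C\|\Phi\|_0^2$ on $\mathcal{H}^+$, then take square roots for one direction and squares for the converse. The paper's version is terser and omits the bookkeeping you spell out (reality of the forms, well-definedness of the norms, the $C \leftrightarrow \sqrt{C}$ change), but the argument is identical.
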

	\begin{proof}
		It is enough to note that $\|\Phi\|_0^2 = (\Phi, (H_0 + 1)\Phi)$ and $\|\Phi\|_{+,t}^2 = (\Phi, H(t)\Phi)$.
		Therefore, taking square roots in \ref{assump:simon_normequiv}, the~uniform equivalence of the norms follows.
		Conversely, taking squares on the uniform equivalence of the norms yields \ref{assump:simon_normequiv}.
	\end{proof}
	\begin{Remark}
		Note that, since we have $\|\cdot\|_0 \sim \|\cdot\|_{+,t}$ with constant uniform on $t$, for~any $t_0 \in [0,T]$, there is $K$ such that:
		\begin{equation*}
			K^{-1} \|\cdot\|_{+,t_0} \leq \|\cdot\|_{+,t} \leq K \|\cdot\|_{+,t_0},
		\end{equation*}
		for every $t \in [0, T]$.
	\end{Remark}

	\begin{Proposition} \label{prop:norm_equiv_kisynski}
		Let $\langle \cdot, \cdot \rangle_{+,t}$ be inner products as in Assumption \ref{assump:kisynski}.
		Then, \ref{assump:kisynski_normequiv} implies that, for~every $t \in [0,T]$, the~norms $\|\cdot\|_{+,t}$ are equivalent.
		If, moreover, $t \mapsto \langle \cdot, \cdot \rangle_{+,t}$ is in $C^1([0,T])$, then the equivalence of the norms is uniform on $t$.
	\end{Proposition}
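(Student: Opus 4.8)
The plan is to regard the family $\{\langle\cdot,\cdot\rangle_{+,t}\}_{t\in[0,T]}$ as a family of bounded sesquilinear forms on a \emph{single} fixed Hilbert space, namely $\mathcal{H}^+$ equipped with one of these inner products (say $\langle\cdot,\cdot\rangle_{+,0}$), and then apply Proposition~\ref{prop:form_equicont_equidiff} together with the representation of forms by operators.

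\emph{Pairwise equivalence.} Fix $t,s\in[0,T]$. By~\ref{assump:kisynski_normequiv} both $(\mathcal{H}^+,\|\cdot\|_{+,t})$ and $(\mathcal{H}^+,\|\cdot\|_{+,s})$ are Hilbert spaces continuously contained in $\mathcal{H}$, so the identity map $(\mathcal{H}^+,\|\cdot\|_{+,t})\to(\mathcal{H}^+,\|\cdot\|_{+,s})$ has closed graph: if $\Phi_n\to\Phi$ in $\|\cdot\|_{+,t}$ and $\Phi_n\to\Psi$ in $\|\cdot\|_{+,s}$, then both convergences also hold in $\mathcal{H}$, whence $\Phi=\Psi$. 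The closed graph theorem then makes this identity bounded, and the same argument applied to its inverse gives $\|\cdot\|_{+,t}\sim\|\cdot\|_{+,s}$. (This is exactly the reasoning behind the proof of Theorem~\ref{thm:eqiv+_equiv-}.)

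\emph{Uniform equivalence.} Write $\|\cdot\|_+:=\|\cdot\|_{+,0}$ and $v_t:=\langle\cdot,\cdot\rangle_{+,t}$. By the previous step each $v_t$ is a bounded, Hermitian, strictly positive sesquilinear form on $(\mathcal{H}^+,\|\cdot\|_+)$; let $T_t\in\mathcal{B}(\mathcal{H}^+,\|\cdot\|_+)$ be the operator representing it, $v_t(\Psi,\Phi)=\langle\Psi,T_t\Phi\rangle_+$. Then $T_t$ is self-adjoint and positive, and the lower bound $\|\cdot\|_+\le c_t\|\cdot\|_{+,t}$ from the first step shows $T_t\ge c_t^{-2}\mathbb{I}$, so each $T_t$ is invertible with bounded inverse. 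When $\mathcal{H}^+$ carries the fixed norm $\|\cdot\|_+$, the hypothesis guarantees that $t\mapsto v_t(\Psi,\Phi)$ is continuous on the compact interval $[0,T]$ for each $\Psi,\Phi$ (this is the only use made of the $C^1([0,T])$ assumption), so Proposition~\ref{prop:equicontinuity_forms} applies and yields
\[
\lim_{t\to t_0}\ \sup_{\Psi,\Phi\in\mathcal{H}^+\setminus\{0\}}\frac{|v_t(\Psi,\Phi)-v_{t_0}(\Psi,\Phi)|}{\|\Psi\|_+\|\Phi\|_+}=0,\qquad \sup_{t\in[0,T]}\ \sup_{\Psi,\Phi\neq0}\frac{|v_t(\Psi,\Phi)|}{\|\Psi\|_+\|\Phi\|_+}=:M<\infty.
\]
By Proposition~\ref{prop:form-norm} the left-hand limit says precisely that $t\mapsto T_t$ is continuous in the operator norm of $\mathcal{B}(\mathcal{H}^+,\|\cdot\|_+)$, and the supremum equals $\sup_t\|T_t\|$; hence $\|T_t\|\le M$ for all $t$. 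Since inversion is continuous on the set of invertible operators, $t\mapsto\|T_t^{-1}\|$ is continuous on $[0,T]$, hence bounded by some $M'<\infty$, i.e.\ $T_t\ge (M')^{-1}\mathbb{I}$ uniformly in $t$. Combining the two estimates, $(M')^{-1}\|\Phi\|_+^2\le\|\Phi\|_{+,t}^2\le M\|\Phi\|_+^2$ for all $\Phi\in\mathcal{H}^+$ and $t\in[0,T]$; applying this to two times $t,s$ gives $c^{-1}\|\Phi\|_{+,s}\le\|\Phi\|_{+,t}\le c\|\Phi\|_{+,s}$ with $c=\sqrt{MM'}$, which is the claimed uniform equivalence.

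The delicate point is the following. The compactness clause of Proposition~\ref{prop:form_equicont_equidiff} hands us the uniform \emph{upper} bound $\|T_t\|\le M$ for free, but not a uniform \emph{lower} bound: a direct attempt to estimate $\|\Phi\|_{+,t}$ from below pointwise in $t$ via the first step only produces $t$-dependent constants and runs in a circle. The resolution is to upgrade the pointwise invertibility of each $T_t$ to a uniform lower bound $T_t\ge(M')^{-1}\mathbb{I}$, which is where the operator-norm continuity of $t\mapsto T_t$ (the content of the first displayed limit) together with compactness of $[0,T]$ is indispensable; everything else is routine bookkeeping with the representation theorem and the closed graph theorem.
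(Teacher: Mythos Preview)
Your argument is correct. The first part (pairwise equivalence via the closed graph theorem) is essentially the paper's argument, just phrased for the identity map rather than for the composite $A(t')A(t)^{-1}$.

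For the uniform equivalence you take a different route from the paper. The paper simply refers to \cite{Kisynski1964}, Lemma~7.3, and says the constant is made uniform via the Uniform Boundedness Principle. You instead represent the forms by bounded self-adjoint operators $T_t$ on the fixed Hilbert space $(\mathcal{H}^+,\|\cdot\|_+)$, use Proposition~\ref{prop:equicontinuity_forms} (which packages a UBP argument) to obtain norm continuity of $t\mapsto T_t$ and a uniform upper bound, and then invoke continuity of inversion plus compactness of $[0,T]$ to get a uniform lower bound $T_t\ge (M')^{-1}\mathbb{I}$. This is a clean, self-contained argument that stays entirely within the machinery developed in Section~\ref{sec:preliminaries} of this paper. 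It also makes explicit, as you note, that only continuity of $t\mapsto\langle\Psi,\Phi\rangle_{+,t}$ is needed for this step, not the full $C^1$ hypothesis; the paper's statement and cited source do not isolate this. Two minor remarks: your appeal to Proposition~\ref{prop:form-norm} is slightly off, since that result is stated for operators in $\mathcal{B}(\mathcal{H}^+,\mathcal{H}^-)$ in a scale, whereas here $T_t\in\mathcal{B}(\mathcal{H}^+)$; but the identity $\|T_t\|=\sup_{\Psi,\Phi}|\langle\Psi,T_t\Phi\rangle_+|/(\|\Psi\|_+\|\Phi\|_+)$ you actually need is elementary. And the parenthetical pointer to Theorem~\ref{thm:eqiv+_equiv-} is a bit loose, since that theorem addresses the $\|\cdot\|_-$ norms rather than the $\|\cdot\|_+$ equivalence itself, though the closed-graph mechanism is indeed the same.
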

	\begin{proof}
		By \ref{assump:kisynski_normequiv}, $\mathcal{H}^+_t$ is continuously embedded in $\mathcal{H}$; that is, there is $K > 0$ such that:
		\begin{equation*}
			\|\Phi\| \leq K \|\Phi\|_{+,t}, \qquad \forall \Phi \in \mathcal{H}^+_t.
		\end{equation*}
		Therefore, $\langle \cdot, \cdot \rangle_{+,t}$, as~a Hermitian sesquilinear form densely defined on $\mathcal{H}$, is strictly positive, and there exists a self-adjoint, strictly positive operator $A(t): \mathcal{H}^+ \to \mathcal{H}$ such that:
		\begin{equation*}
			\langle \Psi, \Phi \rangle_{+,t} = \langle A(t)\Psi, A(t)\Phi \rangle, \qquad
			(\Psi,\Phi \in \mathcal{H}^+).
		\end{equation*}
		Since it is positive and self-adjoint, $A(t)^{-1}: \mathcal{H} \to \mathcal{H}^+$ is a bounded self-adjoint operator.
		By the closed graph theorem, $A(t')A(t)^{-1}: \mathcal{H} \to \mathcal{H}$ is a bounded operator, and for $\Phi \in \mathcal{H}^+$:
		\begin{equation*}
			\|\Phi\|_{+,t'} = \|A(t')A(t)^{-1}A(t)\Phi\|
			\leq C_{t,t'} \|A(t)\Phi\| = C_{t,t'}\|\Phi\|_{+,t},
		\end{equation*}
		where $C_{t,t'} \coloneqq \|A(t')A(t)^{-1}\|$.

		The rest of the proof follows \cite{Kisynski1964}, Lemma 7.3, where it is proven using the Uniform Boundedness Principle that the constant can be chosen independently of $t$ and $t'$.
	\end{proof}

	\begin{Corollary} \label{corol:kisynski_implies_S1}
		Let $\langle \cdot, \cdot \rangle_{+,t}$, $t \in [0, T]$, be inner products satisfying Assumption \ref{assump:kisynski}, and~let $H(t)$ be the operator in $\mathcal{B}(\mathcal{H}^+, \mathcal{H}^-)$ such that $\langle \Psi, \Phi \rangle_{+,t} = (\Psi, H(t)\Phi)$ for every $\Psi, \Phi \in \mathcal{H}^+$.
		Then, $H(t)$ satisfies \ref{assump:simon_normequiv} of Assumption \ref{assump:simon} with $H_0 = H(t_0)$ for some $t_0$ fixed.
	\end{Corollary}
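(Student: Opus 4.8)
The plan is to obtain \ref{assump:simon_normequiv} for $H_0 = H(t_0)$ as a straightforward consequence of the uniform equivalence of the form norms $\|\Phi\|_{+,t} \coloneqq \sqrt{\langle\Phi,\Phi\rangle_{+,t}}$ established in Proposition~\ref{prop:norm_equiv_kisynski}, after accounting for the shift by the identity hidden in ``$H_0+1$''. First I would rewrite \ref{assump:simon_normequiv} in the present notation. Since it is an inequality between the operators $H(t)$ and $H_0+1$ in $\mathcal{B}(\mathcal{H}^+,\mathcal{H}^-)$, tested against the canonical pairing $(\cdot,\cdot)$ which restricts to $\langle\cdot,\cdot\rangle$ on $\mathcal{H}^+$, and since by hypothesis $(\Phi,H(t)\Phi) = \langle\Phi,\Phi\rangle_{+,t} = \|\Phi\|_{+,t}^2$ for $\Phi\in\mathcal{H}^+$ while $(\Phi,(H_0+1)\Phi) = \|\Phi\|_{+,t_0}^2 + \|\Phi\|^2$, condition \ref{assump:simon_normequiv} is equivalent to the existence of $C>0$ independent of $t$ with
\[
	C^{-1}\bigl(\|\Phi\|_{+,t_0}^2 + \|\Phi\|^2\bigr) \leq \|\Phi\|_{+,t}^2 \leq C\bigl(\|\Phi\|_{+,t_0}^2 + \|\Phi\|^2\bigr), \qquad \Phi \in \mathcal{H}^+ .
\]

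Next I would establish this by composing two estimates. For the harmless shift: \ref{assump:kisynski_normequiv} at $t_0$ states that $\mathcal{H}^+_{t_0}$ is continuously contained in $\mathcal{H}$, i.e.\ $\|\Phi\| \leq K \|\Phi\|_{+,t_0}$ for some $K>0$; hence $\|\Phi\|_{+,t_0}^2 \leq \|\Phi\|_{+,t_0}^2 + \|\Phi\|^2 \leq (1+K^2)\|\Phi\|_{+,t_0}^2$, so $\|\cdot\|_{+,t_0}^2 + \|\cdot\|^2$ and $\|\cdot\|_{+,t_0}^2$ are equivalent with a constant depending only on $t_0$. For the substantive part: Assumption~\ref{assump:kisynski} includes \ref{assump:kisynski_regularity} with $n\geq 1$, so $t\mapsto\langle\Psi,\Phi\rangle_{+,t}$ is in particular continuously differentiable, and Proposition~\ref{prop:norm_equiv_kisynski} yields $c>0$ with $c^{-1}\|\Phi\|_{+,t_0} \leq \|\Phi\|_{+,t} \leq c\|\Phi\|_{+,t_0}$ for all $t\in[0,T]$ and $\Phi\in\mathcal{H}^+$. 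Squaring this and chaining with the previous equivalence produces the displayed inequalities with $C$ of the order of $c^2(1+K^2)$, which is \ref{assump:simon_normequiv}; note that $t_0\in[0,T]$ is arbitrary.

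I do not foresee any genuine difficulty: the whole argument is bookkeeping once Proposition~\ref{prop:norm_equiv_kisynski} is at hand. The only step meriting care is the first one, where \ref{assump:simon_normequiv} must be read correctly as an operator inequality in $\mathcal{B}(\mathcal{H}^+,\mathcal{H}^-)$ and translated into a chain of scalar inequalities; here one uses that, by construction, $H(t)$ is the operator representing the form $\langle\cdot,\cdot\rangle_{+,t}$ and that the pairing of the scale restricts to the inner product of $\mathcal{H}$ on $\mathcal{H}^+$. A minor point is the slight discrepancy between the $C^1([0,T])$ regularity invoked in Proposition~\ref{prop:norm_equiv_kisynski} and the $C^n(0,T)$ of \ref{assump:kisynski_regularity}; I would simply understand the latter as supplying continuous differentiability up to the endpoints, consistently with the rest of the paper.
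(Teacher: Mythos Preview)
Your proposal is correct and follows essentially the same route as the paper: both invoke Proposition~\ref{prop:norm_equiv_kisynski} for the uniform equivalence $\|\cdot\|_{+,t}\sim\|\cdot\|_{+,t_0}$, absorb the ``$+1$'' shift by comparing $\|\cdot\|_{+,t_0}^2+\|\cdot\|^2$ with $\|\cdot\|_{+,t_0}^2$ (or $\|\cdot\|_{+,t}^2$), and then read off \ref{assump:simon_normequiv} via Proposition~\ref{prop:norm_equiv_simon}. The only cosmetic difference is that the paper bounds $\|\Phi\|$ by $\|\Phi\|_{+,t}$ directly, whereas you bound it by $K\|\Phi\|_{+,t_0}$ via \ref{assump:kisynski_normequiv} and then chain; your version is arguably a bit more explicit about where the continuous embedding is used.
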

	\begin{proof}
		Let $\|\cdot\|_{+,t} = \sqrt{\langle \cdot, H(t) \cdot \rangle}$. By~Proposition \ref{prop:norm_equiv_kisynski}, for~any $t,t_0$, we have:
		\begin{equation*}
			K^{-1} \|\Phi\|_{+,t_0} \leq \|\Phi\|_{+,t} \leq K \|\Phi\|_{+,t_0}.
		\end{equation*}
		For a fixed $t_0$, define the norm $\|\Phi\|_0 \coloneqq \sqrt{\|\Phi\|_{+,t_0}^2 + \|\Phi\|^2}$. Let us show that $\|\cdot\|_0$ is equivalent to $\|\cdot\|_{+,t}$.
		Since $\|\Phi\|_{+,t_0} \leq \|\Phi\|_0$, it follows that $\|\Phi\|_{+,t} \leq K \|\Phi\|_0$.
		Furthermore, since $\|\Phi\|_{+,t} \geq \|\Phi\|$,
		\begin{equation*}
			K^{-2} \|\Phi\|_0^2 \leq K^{-2}\|\Phi\|_{+,t_0}^2 + \|\Phi\|^2 \leq 2 \|\Phi\|_{+,t}^2.
		\end{equation*}
		Hence, there is $\tilde{K} > 1$ such that:
		\begin{equation*}
			\tilde{K}^{-1} \|\Phi\|_{+,t} \leq \|\Phi\|_0 \leq \tilde{K} \|\Phi\|_{+,t}.
		\end{equation*}
		By Proposition \ref{prop:norm_equiv_simon}, the~preceding inequalities imply \ref{assump:simon_normequiv}.
	\end{proof}

	Using the structure of the scales of Hilbert spaces, from~the equivalence of the norms $\|\cdot\|_{+,t}$, the~equivalence of the norms $\|\cdot\|_{-,t}$ follows as well (see Theorem~\ref{thm:eqiv+_equiv-}).
	This motivates us to work with a reference norm $\|\cdot\|_\pm$ and use the uniform equivalence when a particular $\|\cdot\|_{\pm,t}$ is convenient.
	The most convenient choice is to take $\|\cdot\|_\pm = \|\cdot\|_{\pm,t_0}$ for some reference $t_0$.
	We denote by $\langle \cdot, \cdot \rangle_+$ the inner product such that $\|\cdot\|_+^2 = \langle \cdot, \cdot \rangle_+$ and by $\mathcal{H}^\pm$ the Hilbert space $(\mathcal{H}^\pm, \langle \cdot, \cdot \rangle_\pm)$.

	Let us study now the regularity of the generators $H(t)$.

	\begin{Proposition} \label{prop:regularity_simon}
		Let $H(t)$ be as in Assumption \ref{assump:simon}.
		Then, \ref{assump:simon_regularity} holds if and only if $t \mapsto H(t)$ is differentiable in the sense of $\mathcal{B}(\mathcal{H}^+, \mathcal{H}^-)$ and, for~every $t \in [0, T]$,
		\begin{equation*}
			\left\| \frac{\d}{\d t} H(t) \right\|_{+,-} \leq C.
		\end{equation*}
	\end{Proposition}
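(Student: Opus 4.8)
The plan is to run everything through the scale of Hilbert spaces $\mathcal{H}^+_t\subset\mathcal{H}\subset\mathcal{H}^-_t$ attached to $H(t)$ and to use that $H(t)^{\pm1/2}$ are isometric bijections between its rungs --- in particular $H(t)^{-1/2}\colon\mathcal{H}\to\mathcal{H}^+_t$ and $H(t)^{-1/2}\colon\mathcal{H}^-_t\to\mathcal{H}$ --- so that conjugation by $H(t)^{-1/2}$ is an isometric isomorphism $\mathcal{B}(\mathcal{H}^+_t,\mathcal{H}^-_t)\to\mathcal{B}(\mathcal{H})$. The first bookkeeping point is that \ref{assump:simon_normequiv}, through Theorem~\ref{thm:eqiv+_equiv-}, makes all the norms $\|\cdot\|_{\pm,t}$ uniformly equivalent to fixed reference norms $\|\cdot\|_\pm$; hence $\mathcal{B}(\mathcal{H}^+_t,\mathcal{H}^-_t)=\mathcal{B}(\mathcal{H}^+,\mathcal{H}^-)$ with uniformly comparable norms, $H(t)\colon\mathcal{H}^+\to\mathcal{H}^-$ is a bijection, and $\sup_t\|H(t)\|_{+,-}<\infty$, $\sup_t\|H(t)^{-1}\|_{-,+}<\infty$. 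The analytic core is the elementary identity $B(t):=\tfrac{\d}{\d t}H(t)^{-1}=-H(t)^{-1}\bigl(\tfrac{\d}{\d t}H(t)\bigr)H(t)^{-1}$, valid whenever the derivatives exist, which after conjugation by $H(t)^{1/2}$ reads
\begin{equation*}
  H(t)^{1/2}\,B(t)\,H(t)^{1/2}=-\,H(t)^{-1/2}\bigl(\tfrac{\d}{\d t}H(t)\bigr)H(t)^{-1/2}.
\end{equation*}
By the isometry remark the right-hand side has $\mathcal{B}(\mathcal{H})$-norm equal to $\|\tfrac{\d}{\d t}H(t)\|_{+,-,t}$, which is uniformly comparable to $\|\tfrac{\d}{\d t}H(t)\|_{+,-}$; so the bound in \ref{assump:simon_regularity} and the bound $\|\tfrac{\d}{\d t}H(t)\|_{+,-}\le C$ are equivalent up to a $t$-independent constant, and only the matching of the two differentiability notions remains.

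For the implication ``$t\mapsto H(t)$ differentiable in $\mathcal{B}(\mathcal{H}^+,\mathcal{H}^-)$ $\Rightarrow$ \ref{assump:simon_regularity}'', differentiability forces continuity of $t\mapsto H(t)$ in $\mathcal{B}(\mathcal{H}^+,\mathcal{H}^-)$, and then, writing $H(t)^{-1}-H(s)^{-1}=H(t)^{-1}(H(s)-H(t))H(s)^{-1}$ and using $\sup_t\|H(t)^{-1}\|_{-,+}<\infty$, also continuity of $t\mapsto H(t)^{-1}$ in $\mathcal{B}(\mathcal{H}^-,\mathcal{H}^+)$. Letting $t\to s$ in $\tfrac{H(t)^{-1}-H(s)^{-1}}{t-s}=-H(t)^{-1}\tfrac{H(t)-H(s)}{t-s}H(s)^{-1}$ then shows that $B(s)=\tfrac{\d}{\d t}H(t)^{-1}|_{t=s}$ exists in $\mathcal{B}(\mathcal{H}^-,\mathcal{H}^+)$, a fortiori in $\mathcal{B}(\mathcal{H})$; the conjugation identity above then provides the bound of \ref{assump:simon_regularity}, uniformly in $t$.

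The converse, ``\ref{assump:simon_regularity} $\Rightarrow$ $t\mapsto H(t)$ differentiable in $\mathcal{B}(\mathcal{H}^+,\mathcal{H}^-)$'', is the delicate direction, because \ref{assump:simon_regularity} only furnishes differentiability of $t\mapsto H(t)^{-1}$ in the coarse norm of $\mathcal{B}(\mathcal{H})$, whereas inverting to obtain differentiability of $H(t)$ requires it in the finer norm of $\mathcal{B}(\mathcal{H}^-,\mathcal{H}^+)$. This is exactly what the auxiliary bound $\|H(t)^{1/2}B(t)H(t)^{1/2}\|\le C$ is for: writing $B(t)=H(t)^{-1/2}\bigl(H(t)^{1/2}B(t)H(t)^{1/2}\bigr)H(t)^{-1/2}$ and using the isometries $H(t)^{-1/2}$ shows that $B(t)$ extends to an operator in $\mathcal{B}(\mathcal{H}^-,\mathcal{H}^+)$ with $\sup_t\|B(t)\|_{-,+}<\infty$. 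I would then consider, on the Hilbert space $\mathcal{H}^-$, the bounded sesquilinear forms $w_t(\Psi,\Phi):=(\Psi,H(t)^{-1}\Phi)$ (bounded by $\sup_t\|H(t)^{-1}\|_{-,+}$), check that $t\mapsto w_t(\Psi,\Phi)$ is differentiable for every $\Psi,\Phi\in\mathcal{H}^-$ --- true for $\Psi,\Phi\in\mathcal{H}$ by \ref{assump:simon_regularity}, and extended to all of $\mathcal{H}^-$ by density, using the uniform $\mathcal{B}(\mathcal{H}^-,\mathcal{H}^+)$-bounds on $H(t)^{-1}$ and $B(t)$ and the theorem on term-by-term differentiation --- and then invoke Proposition~\ref{prop:equidifferentiability_forms} (applicable since $\mathcal{H}^-$ is itself a Hilbert space) together with the scale-duality identity $\sup_{\Psi,\Phi\in\mathcal{H}^-\setminus\{0\}}\frac{|(\Psi,T\Phi)|}{\|\Psi\|_-\|\Phi\|_-}=\|T\|_{-,+}$ (the analogue of Proposition~\ref{prop:form-norm}) to conclude that $t\mapsto H(t)^{-1}$ is differentiable in $\mathcal{B}(\mathcal{H}^-,\mathcal{H}^+)$, with derivative $B(t)$ and uniformly bounded derivative. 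Since $H(t)\colon\mathcal{H}^+\to\mathcal{H}^-$ is the uniformly bounded inverse of $H(t)^{-1}$, inverting yields differentiability of $t\mapsto H(t)$ in $\mathcal{B}(\mathcal{H}^+,\mathcal{H}^-)$ with $\tfrac{\d}{\d t}H(t)=-H(t)B(t)H(t)$, and the required bound on $\|\tfrac{\d}{\d t}H(t)\|_{+,-}$ follows from the conjugation identity of the first paragraph. I expect this bootstrapping step --- upgrading $\mathcal{B}(\mathcal{H})$-regularity of $H(t)^{-1}$ to $\mathcal{B}(\mathcal{H}^-,\mathcal{H}^+)$-regularity --- to be the main technical obstacle.
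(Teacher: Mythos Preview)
Your proposal is correct, and for the ``easy'' direction (differentiability of $H(t)$ in $\mathcal{B}(\mathcal{H}^+,\mathcal{H}^-)$ $\Rightarrow$ \ref{assump:simon_regularity}) as well as for the matching of the two bounds via the conjugation identity
\[
H(t)^{1/2}B(t)H(t)^{1/2}=-H(t)^{-1/2}\Bigl(\tfrac{\d}{\d t}H(t)\Bigr)H(t)^{-1/2},
\]
you and the paper argue in essentially the same way (resolvent identity plus uniform norm equivalence from \ref{assump:simon_normequiv}).

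The genuine difference lies precisely where you flag the ``main technical obstacle'': upgrading the $\mathcal{B}(\mathcal{H})$-differentiability of $H(t)^{-1}$ provided by \ref{assump:simon_regularity} to $\mathcal{B}(\mathcal{H}^-,\mathcal{H}^+)$-differentiability. The paper handles this step very tersely: it writes the identity $\|T_\delta(t)\|_{-,+}=\|A_0^{1/2}T_\delta(t)A_0^{1/2}\|$ (with $A_0$ a fixed reference operator) and from this alone asserts that differentiability in $\|\cdot\|$ implies differentiability in $\|\cdot\|_{-,+}$. As written this is not justified, since $A_0^{1/2}$ is unbounded on $\mathcal{H}$ and $\|T_\delta\|\to 0$ does not in general force $\|A_0^{1/2}T_\delta A_0^{1/2}\|\to 0$. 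Your route is different and more robust: you pass to the sesquilinear forms $w_t(\Psi,\Phi)=(\Psi,H(t)^{-1}\Phi)$ on $\mathcal{H}^-$, establish pointwise differentiability for all $\Psi,\Phi\in\mathcal{H}^-$ by a density-plus-term-by-term-differentiation argument (this is exactly where the uniform bound $\sup_t\|B(t)\|_{-,+}<\infty$, supplied by the second half of \ref{assump:simon_regularity}, is needed), and then invoke Proposition~\ref{prop:equidifferentiability_forms} and the form--norm identity of Proposition~\ref{prop:form-norm} on the Hilbert space $\mathcal{H}^-$ to promote this to norm differentiability in $\mathcal{B}(\mathcal{H}^-,\mathcal{H}^+)$. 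What your approach buys is an honest proof of the delicate implication, at the cost of appealing to the form-regularity machinery of Section~\ref{sec:op_form_functions}; the paper's version is shorter but leaves that implication unsupported.
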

	\begin{proof}
		Note that, if~\ref{assump:simon_regularity} holds,
		\begin{equation*}
			\|B(t)\|_{-,+} = \sup_{\Phi \in \mathcal{H}^-} \frac{\|B(t) \Phi\|_+}{\|\Phi\|_-}
			\leq K \sup_{\Psi \in \mathcal{H}} \frac{\|H(t)^{\sfrac{1}{2}} B(t) H(t)^{\sfrac{1}{2}} \Psi\|}{\|\Psi\|}
			\leq KC
		\end{equation*}
		where we used the equivalence of the norms $\|\cdot\|_{\pm,t} \sim \|\cdot\|_\pm$ (cf.\ Proposition~\ref{prop:norm_equiv_simon}) and the assumption $\|H(t)^{\sfrac{1}{2}} B(t) H(t)^{\sfrac{1}{2}}\| < C$.
		Therefore, $\|B(t)\|_{-,+} < KC$, so that $B(t)$ can be continuously extended to an operator in $\mathcal{B}(\mathcal{H}^-, \mathcal{H}^+)$.

		Define the operator $T_\delta(t) = \delta^{-1}[H(t+\delta)^{-1} - H(t)^{-1}] - B(t)$ in $\mathcal{B}(\mathcal{H}^-, \mathcal{H}^+)$. It follows that:
		\begin{equation*}
			\|T_\delta(t)\|_{-,+} = \sup_{\Phi \in \mathcal{H}^-} \frac{\|T_\delta(t)\Phi\|_+}{\|\Phi\|_-}
			= \|A_0^{\sfrac{1}{2}} T_\delta(t) A_0^{\sfrac{1}{2}}\|,
		\end{equation*}
		where $A_0$ is the strictly positive self-adjoint operator such that $\|\cdot\|_\pm = \|A_0^{\pm\sfrac{1}{2}}\cdot\|$.
		Hence, $H(t)^{-1}$ is differentiable in the sense of $\|\cdot\|_{-,+}$ if $H(t)^{-1}$ is differentiable in the sense of $\|\cdot\|$. By~the product rule, this is equivalent to $H(t)$ being differentiable in the sense of $\|\cdot\|_{+,-}$.

		Conversely, assume $H(t)$ is differentiable in the sense of $\mathcal{B}(\mathcal{H}^+, \mathcal{H}^-)$.
		By the product rule, $H(t)^{-1}$ is differentiable in the sense of $\mathcal{B}(\mathcal{H}^-, \mathcal{H}^+)$.
		Denote by $B(t)$ the derivative of $H(t)^{-1}$ in the sense of $\mathcal{B}(\mathcal{H}^-, \mathcal{H}^+)$, and consider again the operator $T_\delta(t) = \delta^{-1}[H(t+\delta)^{-1} - H(t)^{-1}] - B(t)$.
		For $\Phi \in \mathcal{H}$, we have:
		\begin{equation*}
			\|T_\delta(t)\Phi\| \leq \|T_\delta(t)\Phi\|_+ \leq \|T_\delta(t)\|_{-,+} \|\Phi\|_- \leq \|T_\delta(t)\|_{-,+}\|\Phi\|.
		\end{equation*}
		Therefore, $\|T_\delta(t)|_\mathcal{H}\| \leq \|T_\delta(t)\|_{-,+}$, and the differentiability of $H(t)^{-1}$ in the sense of $\mathcal{B}(\mathcal{H}^-, \mathcal{H}^+)$ implies the differentiability in the sense of $\mathcal{B}(\mathcal{H})$.

		Finally,
		\begin{equation*}
			H(t)^{\sfrac{1}{2}} \frac{\d}{\d t} H(t)^{-1} H(t)^{\sfrac{1}{2}} = H(t)^{-\sfrac{1}{2}} \frac{\d}{\d t} H(t) H(t)^{-\sfrac{1}{2}},
		\end{equation*}
		and it follows that $\|\frac{\d}{\d t} H(t)\|_{+,-,t} = \|H(t)^{\sfrac{1}{2}} B(t) H(t)^{\sfrac{1}{2}}\|$.
		The equivalence of the norms $\|\cdot\|_{\pm,t} \sim \|\cdot\|_{\pm}$ implies that $\|H(t)^{\sfrac{1}{2}} B(t) H(t)^{\sfrac{1}{2}}\|$ is bounded uniformly on $t$ if and only if $\|\frac{\d}{\d t} H(t)\|_{+,-}$ is bounded uniformly on $t$.
	\end{proof}
	\begin{Remark} \label{remark:op_deriv_plusminus}
		Note that, in~proving Proposition \ref{prop:regularity_simon}, we also showed~that:
		\begin{enumerate}[label=\textit{(\roman*)},nosep,leftmargin=*]
			\item $\frac{\d}{\d t} H(t)^{-1}$ exists in the sense of $\mathcal{B}(\mathcal{H})$ if and only if it exists in the sense of $\mathcal{B}(\mathcal{H}^-, \mathcal{H}^+)$, which holds if and only if $\frac{\d}{\d t}H(t)$ exists in the sense of $\mathcal{B(\mathcal{H}^+, \mathcal{H}^-)}$;
			\item $\|H(t)^{\sfrac{1}{2}} \frac{\d}{\d t} H(t)^{-1} H(t)^{\sfrac{1}{2}}\| = \|H(t)^{-\sfrac{1}{2}} \frac{\d}{\d t} H(t) H(t)^{-\sfrac{1}{2}}\|$.
		\end{enumerate}
	\end{Remark}

	\begin{Proposition} \label{prop:regularity_kisynski}
		Let $\langle \cdot, \cdot \rangle_{+,t}$ be inner products as in Assumption \ref{assump:kisynski}, and let $H(t): \mathcal{H}^+ \to \mathcal{H}^-$ be the family of bounded operators defined by $(\Psi, H(t)\Phi) = \langle \Psi, \Phi \rangle_+$.
		Then, \ref{assump:kisynski_regularity} implies that $H(t)$ is differentiable in the sense of $\mathcal{B}(\mathcal{H}^+, \mathcal{H}^-)$ and that there is $C > 0$ such that:
		\begin{equation*}
			\left\| \frac{\d}{\d t} H(t) \right\|_{+,-} \leq C.
		\end{equation*}
	\end{Proposition}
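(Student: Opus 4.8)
The plan is to translate Assumption~\ref{assump:kisynski_regularity} into a statement about the form-valued function $t\mapsto h_t(\cdot,\cdot):=\langle\cdot,\cdot\rangle_{+,t}$ and then invoke the operator-norm machinery already developed in Section~\ref{sec:op_form_functions}. Concretely, for every fixed $\Psi,\Phi\in\mathcal{H}^+$ the pointwise function $t\mapsto\langle\Psi,\Phi\rangle_{+,t}=(\Psi,H(t)\Phi)$ is in $C^n([0,T])$ with $n\geq 1$ by \ref{assump:kisynski_regularity}, so in particular it is continuously differentiable. The forms $h_t$ are Hermitian, and they are bounded sesquilinear forms on $\mathcal{H}^+$ with respect to the fixed reference norm $\|\cdot\|_+$: this uses Proposition~\ref{prop:norm_equiv_kisynski}, which guarantees that the norms $\|\cdot\|_{+,t}$ are all equivalent, hence each $h_t$ is bounded with respect to $\|\cdot\|_+$. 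Thus the hypotheses of Proposition~\ref{prop:op_norm_differentiability} are met for the family $\{h_t\}_{t\in[0,T]}$ on the Hilbert space $\mathcal{H}^+=(\mathcal{H}^+,\langle\cdot,\cdot\rangle_+)$ together with the associated scale $\mathcal{H}^+\subset\mathcal{H}\subset\mathcal{H}^-$ at the reference time $t_0$.

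First I would fix the reference time $t_0$ and the corresponding scale $\mathcal{H}^+\subset\mathcal{H}\subset\mathcal{H}^-$, and note that the operators $H(t):\mathcal{H}^+\to\mathcal{H}^-$ of the statement are exactly the ones representing the forms $h_t$ relative to the pairing $(\cdot,\cdot)$ of this scale. Then I would apply Proposition~\ref{prop:form-differentiability} (part (iii) of Proposition~\ref{prop:op_norm_differentiability}): since $t\mapsto h_t(\Psi,\Phi)$ is continuously differentiable for each $\Psi,\Phi\in\mathcal{H}^+$ and $[0,T]$ is compact, the derivative $\frac{\d}{\d t}H(t)$ exists in the $\|\cdot\|_{+,-}$-norm sense and is uniformly bounded, i.e.\ there is $C>0$ with $\|\frac{\d}{\d t}H(t)\|_{+,-}\leq C$ for all $t\in[0,T]$. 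This is precisely the conclusion claimed. One should also record that the constant $C$ furnished this way is with respect to the reference norm $\|\cdot\|_\pm=\|\cdot\|_{\pm,t_0}$; by the uniform equivalence of the norms $\|\cdot\|_{\pm,t}\sim\|\cdot\|_\pm$ (Theorem~\ref{thm:eqiv+_equiv-} applied to the $+$-side equivalence from Proposition~\ref{prop:norm_equiv_kisynski}), the bound transfers, up to a change of constant, to any of the scales $\mathcal{H}^+_t\subset\mathcal{H}\subset\mathcal{H}^-_t$.

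I do not anticipate a serious obstacle: the proof is essentially a bookkeeping exercise verifying that \ref{assump:kisynski_regularity} supplies exactly the pointwise $C^1$ regularity needed to feed Proposition~\ref{prop:op_norm_differentiability}, plus the observation that boundedness of the forms on the fixed Hilbert space $\mathcal{H}^+$ is guaranteed by Proposition~\ref{prop:norm_equiv_kisynski}. The one point that deserves a sentence of care is the matching of the pairing $(\cdot,\cdot)$ in Proposition~\ref{prop:op_norm_differentiability} with the pairing $(\cdot,\cdot)$ used to define $H(t):\mathcal{H}^+\to\mathcal{H}^-$ in the statement, so that the operator $\frac{\d}{\d t}H(t)$ produced abstractly is indeed the derivative of the $H(t)$ in question; this is immediate once the reference scale is fixed, since both sides are determined by the same sesquilinear forms $h_t$ and their derivatives $\dot h_t$ via the Riesz correspondence.
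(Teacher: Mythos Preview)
Your proposal is correct and follows exactly the same route as the paper: the paper's proof is the single sentence ``This is Proposition~\ref{prop:form-differentiability} applied to $v_t(\Psi,\Phi)=\langle\Psi,\Phi\rangle_{+,t}$, which is continuously differentiable by assumption,'' and your argument is precisely this application, together with the (legitimate) verification of the hypotheses via Proposition~\ref{prop:norm_equiv_kisynski} and the choice of a reference scale.
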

	\begin{proof}
		This is Proposition \ref{prop:form-differentiability} applied to $v_t(\Psi, \Phi) = \langle \Psi, \Phi \rangle_+$, which is continuously differentiable by assumption (see \ref{assump:kisynski_regularity}).
	\end{proof}

	Corollary \ref{corol:kisynski_implies_S1} and Propositions \ref{prop:regularity_simon} and \ref{prop:regularity_kisynski} yield immediately the following results showing the connection between B.~Simon's and J.~Kisyński's approaches. First of all, J.~Kisyński's assumptions imply B.~Simon's assumptions:
	\begin{Theorem} \label{thm:kisynski_implies_simon}
		Let $\langle \cdot, \cdot \rangle_{+,t}$ be a family of inner products satisfying Assumption \ref{assump:kisynski}, and let $H(t): \mathcal{H}^+ \to \mathcal{H}^-$ be the family of bounded operators defined by $(\Psi, H(t)\Phi) = \langle \Psi, \Phi \rangle_{+,t}$.
		Then, the~family $H(t)$ satisfies Assumption \ref{assump:simon} with $H_0 = H(t_0)$ for some $t_0 \in [0,T]$.
	\end{Theorem}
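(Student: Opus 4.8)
The plan is to assemble the statement from the intermediate results already in hand, taking care that the ambient structural hypotheses of Assumption~\ref{assump:simon} are genuinely met. I would fix an arbitrary reference point $t_0 \in [0,T]$, set $H_0 \coloneqq H(t_0)$, and work with the scale of Hilbert spaces $\mathcal{H}^+ \subset \mathcal{H} \subset \mathcal{H}^-$ associated with $H_0$, so that $\|\cdot\|_\pm = \|\cdot\|_{\pm,t_0}$.

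First I would record the structural facts. By~\ref{assump:kisynski_normequiv} each $\langle\cdot,\cdot\rangle_{+,t}$ is a bounded sesquilinear form on $(\mathcal{H}^+,\|\cdot\|_{+,t})$, and since~\ref{assump:kisynski_regularity} holds with $n\geq 1$, Proposition~\ref{prop:norm_equiv_kisynski} gives the \emph{uniform} equivalence $\|\cdot\|_{+,t}\sim\|\cdot\|_{+,t_0}=\|\cdot\|_+$; hence $\langle\cdot,\cdot\rangle_{+,t}$ is also bounded on $\mathcal{H}^+$, and the Riesz representation theorem produces the operators $H(t)\in\mathcal{B}(\mathcal{H}^+,\mathcal{H}^-)$ of the statement, with $(\Psi,H(t)\Phi)=(H(t)\Psi,\Phi)$ following from hermiticity of the inner product. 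This is exactly the setup required by Assumption~\ref{assump:simon}.

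Next I would verify the two quantitative conditions. Condition~\ref{assump:simon_normequiv} is precisely Corollary~\ref{corol:kisynski_implies_S1} with $H_0=H(t_0)$, which yields a $t$-independent constant $C_1$ with $C_1^{-1}(H_0+1)\leq H(t)\leq C_1(H_0+1)$. For~\ref{assump:simon_regularity}, Proposition~\ref{prop:regularity_kisynski}, applied to $v_t(\Psi,\Phi)=\langle\Psi,\Phi\rangle_{+,t}$ (continuously differentiable by~\ref{assump:kisynski_regularity}), shows that $t\mapsto H(t)$ is differentiable in $\mathcal{B}(\mathcal{H}^+,\mathcal{H}^-)$ with $\|\tfrac{\d}{\d t}H(t)\|_{+,-}\leq C_2$; since~\ref{assump:simon_normequiv} now holds, Proposition~\ref{prop:regularity_simon} translates this into the existence of $B(t)=\tfrac{\d}{\d t}H(t)^{-1}$ in the norm sense of $\mathcal{H}$ with $\|H(t)^{\sfrac{1}{2}}B(t)H(t)^{\sfrac{1}{2}}\|\leq C_2'$ uniformly in $t$. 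Taking $C=\max\{C_1,C_2'\}$ furnishes the single constant demanded by Assumption~\ref{assump:simon}.

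The argument is therefore bookkeeping, and I do not expect a genuine obstacle; the one point meriting attention — and the only place the quantitative part of Assumption~\ref{assump:kisynski} does real work here — is that the operators $H(t)$ must be placed in a \emph{single} space $\mathcal{B}(\mathcal{H}^+,\mathcal{H}^-)$ built from the fixed $H_0=H(t_0)$, rather than in the $t$-dependent spaces $\mathcal{B}(\mathcal{H}^+_t,\mathcal{H}^-_t)$. Legitimacy of this rests on the uniform-in-$t$ equivalence of the norms $\|\cdot\|_{+,t}$, which by Proposition~\ref{prop:norm_equiv_kisynski} needs the $C^1$ regularity in~\ref{assump:kisynski_regularity}; the bare embedding hypothesis~\ref{assump:kisynski_normequiv} alone gives equivalence only for each fixed pair $t,t'$. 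Via Theorem~\ref{thm:eqiv+_equiv-} the uniform equivalence descends to the $\|\cdot\|_{-,t}$ norms, which is what keeps the identifications used in Propositions~\ref{prop:regularity_simon} and~\ref{prop:regularity_kisynski} mutually consistent.
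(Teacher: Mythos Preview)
Your proposal is correct and follows essentially the same route as the paper: invoke Corollary~\ref{corol:kisynski_implies_S1} for \ref{assump:simon_normequiv}, then combine Proposition~\ref{prop:regularity_kisynski} with Proposition~\ref{prop:regularity_simon} for \ref{assump:simon_regularity}. Your additional remarks on the structural setup and on why a single reference scale $\mathcal{H}^+\subset\mathcal{H}\subset\mathcal{H}^-$ suffices are sound and make explicit what the paper's brief proof leaves implicit (the paper also notes, from \ref{assump:kisynski_normequiv}, that the $H(t)$ are strictly positive, which you use tacitly when forming the scale associated with $H_0=H(t_0)$).
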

	\begin{proof}
		By \ref{assump:kisynski_normequiv}, $H(t)$ are strictly positive.
		By Corollary \ref{corol:kisynski_implies_S1}, \ref{assump:simon_normequiv} holds.
		Proposition \ref{prop:regularity_kisynski} implies that $H(t)$ is differentiable in the norm sense of $\mathcal{B}(\mathcal{H}^+, \mathcal{H}^-)$, with~$\|\frac{\d}{\d t}H(t)\|_{+,-} <C$, and~by Proposition \ref{prop:regularity_simon}, this implies \ref{assump:simon_regularity}.
	\end{proof}
	\begin{Remark}
		Note that the regularity obtained for $H(t)$ is higher than required by \ref{assump:simon_regularity}: not only is $t \mapsto H(t)$ differentiable, but~it is also continuously differentiable.
		This is implied by Proposition~\ref{prop:op_norm_differentiability} and the fact that $t \mapsto \langle \cdot, \cdot \rangle_{+,t}$ is continuously differentiable.
		However, this point is crucial since, without~the derivative $\frac{\d}{\d t} \langle \cdot, \cdot \rangle_{+,t}$ being continuous, the~uniform bound for $\frac{\d}{\d t}H(t)^{-1}$ would not be recovered.
	\end{Remark}

	The converse implication (i.e.,\ B.~Simon's assumptions imply J.~Kisyński's) also holds under the additional requirement of the continuity of $\frac{\d}{\d t} H(t)^{-1}$.
	\begin{Theorem}
		Let $H_0, H(t)$ be operators as in Assumption \ref{assump:simon}.
		If $H(t)$ satisfies Assumption~\ref{assump:simon} and $t \in [0,T] \mapsto B(t) = \frac{\d}{\d t} H(t)^{-1} \in \mathcal{B}(\mathcal{H})$ is continuous, then $\langle \cdot, \cdot \rangle_{+,t} \coloneqq (\cdot, H(t) \cdot)$ defines a family of inner products on $\mathcal{H}^+$ satisfying Assumption \ref{assump:kisynski} with $n = 1$.
	\end{Theorem}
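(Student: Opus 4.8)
The plan is to verify the two conditions \ref{assump:kisynski_normequiv} and \ref{assump:kisynski_regularity} of Assumption~\ref{assump:kisynski} with $n=1$ for the sesquilinear forms $\langle\cdot,\cdot\rangle_{+,t}=(\cdot,H(t)\cdot)$. Condition \ref{assump:kisynski_normequiv} is a direct consequence of \ref{assump:simon_normequiv}: the symmetry hypothesis $(\Psi,H(t)\Phi)=(H(t)\Psi,\Phi)$ makes $\langle\cdot,\cdot\rangle_{+,t}$ Hermitian, and $\norm{\Phi}_{+,t}^2=(\Phi,H(t)\Phi)\ge C^{-1}(\Phi,(H_0+1)\Phi)\ge C^{-1}\norm{\Phi}^2$ shows that it is a genuine inner product and that $\mathcal{H}^+$ is continuously contained in $\mathcal{H}$; finally, by Proposition~\ref{prop:norm_equiv_simon} the norm $\norm{\cdot}_{+,t}$ is equivalent to $\norm{\cdot}_0=\sqrt{(\cdot,(H_0+1)\cdot)}$, and since $(\mathcal{H}^+,\norm{\cdot}_0)$ is complete (it is the $+$ space of the scale associated with $H_0$, complete by construction), so is $(\mathcal{H}^+,\norm{\cdot}_{+,t})\eqqcolon\mathcal{H}^+_t$.

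For \ref{assump:kisynski_regularity} I must show that, for fixed $\Psi,\Phi\in\mathcal{H}^+$, the scalar function $f(t)\coloneqq\langle\Psi,\Phi\rangle_{+,t}=(\Psi,H(t)\Phi)$ is in $C^1$. Since \ref{assump:simon_regularity} holds, Proposition~\ref{prop:regularity_simon} gives that $t\mapsto H(t)$ is differentiable in $\mathcal{B}(\mathcal{H}^+,\mathcal{H}^-)$ (hence, in particular, norm-continuous there) with $\norm{\dot H(t)}_{+,-}\le C$. Differentiability of $f$ is then immediate, with $f'(t)=(\Psi,\dot H(t)\Phi)$, because $\abs{f(t+\delta)-f(t)-\delta(\Psi,\dot H(t)\Phi)}\le\delta\,\norm{\Psi}_+\norm{\Phi}_+\,\norm{\delta^{-1}(H(t+\delta)-H(t))-\dot H(t)}_{+,-}=o(\delta)$.

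The only delicate point, and the one I expect to be the main obstacle, is the continuity of $f'$: the natural expression $\dot H(t)=-H(t)B(t)H(t)$ (obtained from the inverse rule, i.e.\ by passing to the limit in $H(t)-H(s)=-H(t)(H(t)^{-1}-H(s)^{-1})H(s)$, where by Remark~\ref{remark:op_deriv_plusminus} the difference quotients of $H(\cdot)^{-1}$ converge in $\mathcal{B}(\mathcal{H}^-,\mathcal{H}^+)$ and $B(t)$ denotes this extension) need not be norm-continuous in $\mathcal{B}(\mathcal{H}^+,\mathcal{H}^-)$, since we only know $t\mapsto B(t)$ is continuous in $\mathcal{B}(\mathcal{H})$, not in $\mathcal{B}(\mathcal{H}^-,\mathcal{H}^+)$. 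The way around it is to work weakly rather than in operator norm. Using the symmetry hypothesis one rewrites $f'(t)=-(\Psi,H(t)B(t)H(t)\Phi)=-(H(t)\Psi,B(t)H(t)\Phi)=-b_t\!\left(H(t)\Psi,H(t)\Phi\right)$, where $b_t(\xi,\eta)\coloneqq(\xi,B(t)\eta)$ is a bounded sesquilinear form on $\mathcal{H}^-$ with $\abs{b_t(\xi,\eta)}\le\norm{B(t)}_{-,+}\norm{\xi}_-\norm{\eta}_-$ and $\sup_{t}\norm{B(t)}_{-,+}<\infty$ (the uniform bound extracted from \ref{assump:simon_regularity} in the proof of Proposition~\ref{prop:regularity_simon}). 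For $\xi,\eta\in\mathcal{H}$ one has $b_t(\xi,\eta)=\langle\xi,B(t)\eta\rangle$ (the pairing restricted to $\mathcal{H}\times\mathcal{H}^+$ is the inner product of $\mathcal{H}$), which is continuous in $t$ because $B$ is continuous in $\mathcal{B}(\mathcal{H})$; by the uniform bound on $\norm{b_t}$ and density of $\mathcal{H}$ in $\mathcal{H}^-$, an $\varepsilon/3$-argument yields that $t\mapsto b_t(\xi,\eta)$ is continuous for \emph{all} $\xi,\eta\in\mathcal{H}^-$. Finally $t\mapsto H(t)\Psi$ and $t\mapsto H(t)\Phi$ are continuous curves in $\mathcal{H}^-$ (norm-continuity of $H(\cdot)$ in $\mathcal{B}(\mathcal{H}^+,\mathcal{H}^-)$), so splitting $b_t(H(t)\Psi,H(t)\Phi)-b_s(H(s)\Psi,H(s)\Phi)$ into the two ``curve'' increments plus the ``form'' increment $b_t-b_s$ evaluated at fixed vectors, and using the uniform bound together with the pointwise continuity just proved, shows that $f'$ is continuous. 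Hence $f\in C^1$, which is \ref{assump:kisynski_regularity} with $n=1$, and the theorem follows.

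It remains to record a few routine verifications used above, all obtained by continuous extension from $\mathcal{H}\times\mathcal{H}$: the Hermiticity relation $\overline{(\xi,\phi)}=(\phi,\xi)$; the identity $(\xi,\phi)=\langle\xi,\phi\rangle$ for $\xi\in\mathcal{H}$, $\phi\in\mathcal{H}^+$; and the consistency of the two meanings of $B(t)\eta$ (via $\mathcal{B}(\mathcal{H})$ and via $\mathcal{B}(\mathcal{H}^-,\mathcal{H}^+)$) when $\eta\in\mathcal{H}$. The equivalence of the $\norm{\cdot}_{-,t}$ norms, although not needed for the argument above, follows from the equivalence of the $\norm{\cdot}_{+,t}$ via Theorem~\ref{thm:eqiv+_equiv-}.
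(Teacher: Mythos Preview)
Your proof is correct. For \ref{assump:kisynski_normequiv} your argument is essentially identical to the paper's. For \ref{assump:kisynski_regularity}, both you and the paper obtain differentiability of $t\mapsto\langle\Psi,\Phi\rangle_{+,t}$ from Proposition~\ref{prop:regularity_simon}, but the approaches diverge for the \emph{continuity} of the derivative. The paper closes by observing that if $H(t)$ is continuously differentiable in $\mathcal{B}(\mathcal{H}^+,\mathcal{H}^-)$ then so is $t\mapsto\langle\Psi,\Phi\rangle_{+,t}$; the passage from continuity of $B(t)$ in $\mathcal{B}(\mathcal{H})$ to norm-continuity of $\dot H(t)=-H(t)B(t)H(t)$ in $\mathcal{B}(\mathcal{H}^+,\mathcal{H}^-)$ (equivalently, of $B(t)$ in $\mathcal{B}(\mathcal{H}^-,\mathcal{H}^+)$) is left implicit there. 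You bypass this question entirely by working weakly: writing $f'(t)=-b_t(H(t)\Psi,H(t)\Phi)$ with $b_t(\xi,\eta)=(\xi,B(t)\eta)$, you use the uniform bound on $\norm{B(t)}_{-,+}$ together with density of $\mathcal{H}$ in $\mathcal{H}^-$ to extend pointwise continuity of $t\mapsto b_t(\xi,\eta)$ from $\mathcal{H}\times\mathcal{H}$ (where it follows directly from continuity of $B$ in $\mathcal{B}(\mathcal{H})$) to all of $\mathcal{H}^-\times\mathcal{H}^-$, and then combine with norm-continuity of the curves $t\mapsto H(t)\Psi,\,H(t)\Phi$ in $\mathcal{H}^-$. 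This is a genuinely different and more self-contained route: it uses only what the hypothesis on $B$ literally gives, and avoids having to upgrade continuity of $B$ to the stronger $\mathcal{B}(\mathcal{H}^-,\mathcal{H}^+)$ topology.
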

	\begin{proof}
		From Assumption \ref{assump:simon}, $\mathcal{H}^+$ is a dense subspace of $\mathcal{H}$.
		Moreover, $\langle \cdot, \cdot \rangle_0 \coloneqq (\cdot, (H_0 + 1)\cdot)$ endows $\mathcal{H}^+$ with the structure of a Hilbert space topologically embedded in $\mathcal{H}$.
		By assumption, $\langle \cdot, \cdot \rangle_{+,t} \coloneqq (\cdot, H(t)\cdot)$ defines a family of inner products on $\mathcal{H}^+$, and by \ref{assump:simon_normequiv}, they induce on $\mathcal{H}^+$ topologies that are equivalent to the one induced by $\langle \cdot, \cdot \rangle_0$.
		Therefore, $(\mathcal{H}^+, \langle \cdot, \cdot \rangle_{+,t})$ are Hilbert spaces topologically embedded in $\mathcal{H}$, and~\ref{assump:kisynski_normequiv} holds.

		{By Proposition \ref{prop:regularity_simon}, \ref{assump:simon_regularity} implies that $H(t)$ is differentiable in $\mathcal{B}(\mathcal{H}^+, \mathcal{H}^-)$.
		By \mbox{Proposition \ref{prop:op_norm_differentiability}},} this implies that the limit:
		\begin{equation*}
			\lim_{t \to t_0} \sup_{\substack{\Psi, \Phi \in \mathcal{H}^+ \\ \|\Psi\|_+ = 1 = \|\Phi\|_+}}
			\frac{\langle\Psi, \Phi\rangle_{+,t} - \langle\Psi, \Phi\rangle_{+,t_0}}{t - t_0}
		\end{equation*}
		exists, which clearly implies that for every $\Psi,\Phi \in \mathcal{H}^+$ fixed, $t \mapsto \langle \Psi, \Phi \rangle_{+,t}$ is differentiable.
		Moreover, if~$H(t)$ is continuously differentiable in $\mathcal{B}(\mathcal{H}^+, \mathcal{H}^-)$, so is the map $t \mapsto \langle \Psi, \Phi \rangle_{+,t}$ for every $\Psi,\Phi \in \mathcal{H}^+$.
	\end{proof}

	\subsection{Example: Particle in a Circle with a Point-like~Interaction} \label{example:particle:circle}

{Consider the Hamiltonian of a free particle moving in the circle with a
point-like interaction. We consider the case in which the strength
of the point-like interaction varies with time and use the methods
developed in the previous section to give necessary conditions for the
existence of the solution of the non-autonomous Schrödinger~equation.

As Hilbert space $\mathcal{H}$, we consider the space of the square integrable
function on the interval $[0,2\pi]$. The~Hamiltonian is given by
the Laplace operator on the self-adjoint domain:
$$ \mathcal{D}_{\alpha} = \{\Phi\in \mathcal{H}^2([0,2\pi]) \mid \Phi(0) = \Phi(2\pi);\quad \Phi'(0)-\Phi'(2\pi) = \alpha \Phi(0) \},$$
where $\mathcal{H}^k([0,2\pi])$ stands for the Sobolev space of order $k$.
Each value of $\alpha$ determines a different self-adjoint extension.
We consider the non-autonomous problem in which $\alpha$ is a
function of time. Notice that the time dependence of the operator
does not appear in its functional form, but only at the boundary
condition. The~family of closed sesquilinear forms associated with this
family of self-adjoint operators is:
$$h_{\alpha}(\Phi,\Psi) = \langle \Phi'(x) , \Psi'(x) \rangle + \alpha \overline{\Phi(0)}\Psi(0) $$
with domain
$\operatorname{dom} h_\alpha = \{ \Phi \in \mathcal{H}^1([0,2\pi]) \mid \Phi(0) = \Phi(2\pi)\}$;
we refer to
~\cite{IbortLledoPerezPardo2015, IbortLledoPerezPardo2015a, BalmasedaDiCosmoPerezPardo2019}
for further~details.

To find the family of operators $H(t): \mathcal{H}^+ \to\mathcal{H}^-$, we need the
trace operator. The~trace operator, $\gamma$, is the operator that
maps a function to its boundary values. It is well-defined, continuous
on $\mathcal{H}^1([0,2\pi])$, and surjective; cf.~\cite{AdamsFournier2003}. Now,
notice that the domains of the forms do not depend on the parameter
$\alpha$, and the family of self-adjoint operators defined above has
constant form domain
$\mathcal{H}^+ := \{ \Phi \in \mathcal{H}^1([0,2\pi]) \mid \Phi(0) = \Phi(2\pi)\}$, which
is a closed subspace of $\mathcal{H}^1([0,2\pi])$ because the trace operator is
continuous on $\mathcal{H}^1([0,2\pi])$. The~boundary term of the sesquilinear
form above can be expressed using the trace operator and the paring
$(\cdot, \cdot)$ in the scale of Hilbert spaces as:
$$\alpha \overline{\Phi(0)}\Psi(0) = (\Phi, \alpha \gamma^\dagger\gamma\Psi),$$
where $\gamma^\dagger$ is the adjoint operator with respect to the
pairing. Hence, the family of time-dependent operators
$H(t): \mathcal{H}^+ \to\mathcal{H}^-$ becomes:
$$H(t)= \bar{\Delta} + \alpha(t)\gamma^\dagger\gamma,$$ where
$\bar{\Delta}$ is the continuous extension of the Laplacian,
$\Delta$, to~$\mathcal{H}^+$.

It is well known that $h_a$ is semibounded from below with a bound
that depends continuously on $\alpha$;
cf.~\cite{IbortLledoPerezPardo2015}. It can be proven that
\ref{assump:simon_normequiv} holds as long as
$\sup_{t\in[0,T]} |\alpha(t)|< M$. Proposition
\ref{prop:regularity_simon} establishes that
\ref{assump:simon_regularity} holds if the derivative of the operators
$H(t)$ in the sense of $\mathcal{B}(\mathcal{H}^+,\mathcal{H}^-)$ exists for all
$t\in[0,T]$ and is uniformly bounded, i.e.,~for any
$\alpha:[0,T]\to \mathbb{R}$ continuous with the bounded derivative. Theorem
\ref{thm:simon} shows that for any such $\alpha$, a unique solution of
the weak Schrödinger equation exists. The~same result is obtained under
the Assumptions \ref{assump:kisynski_normequiv} and
\ref{assump:kisynski_regularity}. Condition
\ref{assump:kisynski_normequiv} is satisfied by construction, and
\ref{assump:kisynski_regularity} is satisfied for $n$ if and only if
$\alpha:[0,T]\to\mathbb{R}$ is in $C^n[0,T]$. Notice that for $n>1$ and
since $[0,T]$ is compact, both the function $\alpha$ and its
derivative are bounded in the interval. Theorem
\ref{thm:kisynski_original} also establishes that a unique solution of
the weak Schrödinger equation exists. One has to require, however, the~stronger condition of the continuous differentiability of the function
$\alpha$. If~$\alpha\in C^2[0,T]$, then Theorem
\ref{thm:kisynski_original} establishes that strong solutions of the
Schrödinger equation exist, i.e.,~solutions of the Schrödinger equation
in the usual sense.
}

\section{Conclusions}

 In this article, after~carefully revising the two seminal approaches of J.~Kisyński and B.~Simon to the well-posedness of the Schrödinger problem associated with a time-varying Hamiltonian, we compared the assumptions at the roots of their results by using Hilbert scales as our primary tool.
We studied the relation between one-parameter families of sesquilinear forms and operators representing them. In~particular, we studied the connection between their respective continuity and differentiability properties. Theses results, presented in Section~\ref{sec:preliminaries}, have their own interest and allow comparing the approaches presented in Section~\ref{sec:existence_dynamics}.

Non-continuous Hamiltonians, in~the sense described in the article, cannot lead to physically meaningful examples in a context with time-dependent domains, i.e.,~time-dependent boundary conditions. In~this context, it is important from the physical point of view to have good characterisations of the well posedness of the Schrödinger equation. The~comparison between the approaches manifests that the conditions imposed by J.~Kisyński are easier to check as it is easier to check the continuity/differentiability properties of functions \ref{assump:kisynski_regularity} than the existence of uniform relative bounds for the operators~\ref{assump:simon_normequiv}, and we provided proof that the latter implies the~former.

{J.~Kisyński gave necessary conditions for the existence of solutions of the non-autonomous} Schrödinger equation for the case in which $\dom H(t)^{\sfrac{1}{2}}$ has a nontrivial dependence in time. This situation cannot be tackled by the framework proposed by B.~Simon. In~the case of a constant form domain, the necessary conditions imposed by J.~Kisyński are more restrictive, as~he required the continuity of the first derivatives, as~opposed to B.~Simon, who just required uniformly bounded derivatives. However, B.~Simon only gave necessary conditions to find solutions of the weak Schrödinger equation and was able to prove the continuity of the solutions, but not differentiability in the sense of $\mathcal{H}$. However, the~weak Schrödinger equation was not as physically meaningful as the strong Schrödinger equation. J.~Kisyński's extra regularity conditions allowed him to prove not only the differentiability of the solutions in the sense of $\mathcal{H}$, but also in the sense of $\mathcal{H}^+$. These imply that these solutions are solutions of the strong Schrödinger~equation.

The analysis presented in this article was devoted to the linear Schrödinger equation and the techniques used originated in the theory of linear operators in Banach and Hilbert spaces. Whether these techniques can be applied to treat also the nonlinear Schrödinger equation is an interesting perspective.

%%%%%%%%%%%%%%%%%%%%%%%%%%%%%%%%%%%%%%%%%%
\vspace{6pt}

%%%%%%%%%%%%%%%%%%%%%%%%%%%%%%%%%%%%%%%%%%

\section*{Acknowledgements}
A.B. and J.M.P.-P. acknowledge support provided by the ``Ministerio de Ciencia e Innovación'' Research Project PID2020-117477GB-I00, by the~QUITEMAD Project P2018/TCS-4342 funded by Madrid Government (Comunidad de Madrid-Spain) and by the Madrid Government (Comunidad de Madrid-Spain) under the Multiannual Agreement with UC3M in the line of ``Research Funds for Beatriz Galindo Fellowships'' (C\&QIG-BG-CM-UC3M), and~in the context of the V PRICIT (Regional Programme of Research and Technological Innovation).
  A.B. acknowledges financial support by ``Universidad Carlos III de Madrid'' through Ph.D. program grant PIPF UC3M 01-1819 and through its mobility grant in 2020.
  D.L. was partially supported by ``Istituto Nazionale di Fisica Nucleare'' (INFN) through the project ``QUANTUM'', and the Italian National Group of Mathematical Physics (GNFM-INdAM).
A.B. and  J.M.P.P. thank the Faculty of Nuclear Sciences and Physical Engineering at the Czech Technical University in Prague for its hospitality and David Krejčiřík for its support. 
 D.L. thanks the Department of Mathematics at ``Universidad Carlos III de Madrid'' for its hospitality.

\end{document}